\newcommand{\footremember}[2]{%
    \footnote{#2}
    \newcounter{#1}
    \setcounter{#1}{\value{footnote}}%
}
\newcommand{\footrecall}[1]{%
    \footnotemark[\value{#1}]%
} 
\author{%
  Jan Giesselmann\footremember{Jan}{Department of Mathematics, Technical University of Darmstadt, Dolivostr. 15, 64293 Darmstadt, Germany}
  \and Kiwoong Kwon\footremember{knu}{Department of Mathematics, Kyungpook National University, Daegu, Republic of Korea}%
  \and Min-Gi Lee\footrecall{knu}\footremember{MG}{Corresponding author; email: leem@knu.ac.kr\newline\newline Acknowledgement: We extend our gratitude to Professor Athanasios Tzavaras for generously sharing his valuable insights.}%
  }
\def\Orb{{\textsf{Orbit}}\,}
\def\E{{\mathcal{E}}}
\def\H{{\mathcal{H}}}
\def\K{{\mathcal{K}}}
\def\Lip{{\textsf{Lip}}}
\def\drr{{d\rho_0(x)d\rho_0(x')}}
\def\dr{{d\rho_0(x)}}
\def\bdot{{\boldsymbol{\cdot}}}
\def\tr{\,\textrm{tr}\,}
\def\div{\,\textrm{div}\,}
\def\sgn{\,\textrm{sgn}\,}
\def\eps{\varepsilon}
\newcommand{\mres}{\mathbin{\vrule height 1.6ex depth 0pt width
0.13ex\vrule height 0.13ex depth 0pt width 1.3ex}}
\newtheorem{theorem}{Theorem}
\theoremstyle{definition}
\newtheorem{definition}[subsubsection]{Definition}
\newtheorem{appendixlemma}[subsection]{Lemma}
\newtheorem{corollary}[subsubsection]{Corollary}
\newtheorem{proposition}[subsubsection]{Proposition}
\theoremstyle{remark}
\newtheorem{remark}[subsubsection]{Remark}
\date{}
\title{Relative entropy technique in terms of position and momentum and its application to Euler-Poisson system}
\begin{document}

\maketitle

\abstract
This paper presents a systematic study of the relative entropy technique for compressible motions of continuum bodies described  as  Hamiltonian flows. While the description for the classical mechanics of $N$ particles involves a Hamiltonian in terms of position and momentum vectors, that for the continuum fluid involves a Hamiltonian in terms of density and momentum. For space dimension $d\ge 2$, the Hamiltonian functional has a non-convex dependency on the deformation gradient or placement map due to material frame indifference. Because of this, the applicability of the relative entropy technique with respect to the deformation gradient or the placement map is inherently limited. Despite these limitations, we delineate the feasible applications and limitations of the technique by pushing it to its available extent. Specifically, we derive the relative Hamiltonian identity, where the Hamiltonian takes the position and momentum field as its primary and conjugate state variables, all within the context of the referential coordinate system that describes the motion.

This approach, when applicable, turns out to yield rather strong stability statements.
As instances, we consider Euler-Poisson systems in one space dimension. For a specific pressureless model, we verify non-increasing \(L^2\) state differences before the formation of \(\delta\)-shock. In addition, weak-strong uniqueness, stability of rarefaction waves, and convergence to the gradient flow in the singular limit of large friction are shown. Depending on the presence or absence of pressure, assumptions are made to suitably accommodate phenomena such as $\delta$-shocks, vacuums, and shock discontinuities in the weak solutions.
\tableofcontents

\section{Introduction}
This article studies the relative entropy technique for   compressible motions of  continuum bodies modeled as  Hamiltonian flows. We consider a continuum body as a collection of matter, and the motion of the body is described by the placement map $\eta(t,\cdot)$\footnote{We follow the terminology in \cite{1991Truesdell}.}, mapping from the reference coordinate system $X$ to the spatial coordinate system $Y$. The placement map $\eta(t,\cdot)$ is a state variable, and the momentum field $m(t,\cdot)$ is its conjugate variable. The Hamiltonian $\H$ is a functional on the pair $(\eta(t,\cdot),m(t,\cdot))$, and the Hamiltonian flow  is represented by  
\begin{equation} \label{HamIntro} \tag{$H$}
    \dot{\eta}=\frac{\delta \H}{\delta {m}}, \quad \dot{{m}}=-\frac{\delta \H}{\delta \eta}.
\end{equation}
The applications of the relative entropy method in hyperbolic problems can be traced back to the works \cite{1979DiPerna} and \cite{1979Dafermos,1979Dafermosa}. This method, rooted in the framework of the second law of thermodynamics, has been widely employed in the examination of stability of solutions in hyperbolic systems of conservation laws with convex entropy. It has been used to prove the weak-strong uniqueness, to conduct singular limit analysis for model convergence, and to obtain stability-related estimates.

The main task of this article is to examine the following quantity
\begin{equation} \label{introrelham}
    \begin{aligned}
    &\H[\eta,m] - \H[\bar\eta,\bar m] - \Big\langle\frac{\delta \H}{\delta \eta}[\bar\eta,\bar m], (\eta-\bar\eta)\Big\rangle- \Big\langle\frac{\delta \H}{\delta m}[\bar\eta,\bar m], (m-\bar m)\Big\rangle\\
     &=:\H[\eta,m \,|\,\bar\eta, \bar m],
    \end{aligned}
\end{equation}
as a {\it relative entropy} in the use of the relative entropy technique. 
To avoid any confusion, the term entropy is reserved exclusively for instances where the relative entropy method itself is invoked. We adhere to the term relative Hamiltonian for the quantity represented by \eqref{introrelham}. The relative Hamiltonian \eqref{introrelham} is between two solutions $(\eta,m)$ and $(\bar\eta,\bar m)$ of equation \eqref{HamIntro}. A formula for the time derivative of $\H[\eta,m \,|\,\bar\eta, \bar m]$, that is the \textit{relative Hamiltonian identity}, will be derived. We primarily focus on Hamiltonian functionals from isentropic Euler and Euler-Poisson equations.
We consider relative Hamiltonians with respect to the pair $(\eta(t,\cdot),m(t,\cdot))$, consisting of our primary and conjugate state variables.

Despite differences in origin between the classical relative entropy method, which is developed based on the second law of thermodynamics, and the above quantity, which is based on a conservative Hamiltonian, the application of the technique leads, at least formally, to the surprisingly simple formula of the relative Hamiltonian identity \eqref{dotrel}.

To better illustrate our purpose, we briefly introduce three well-established instances of relative entropies and their consequences. In the first instance, studied in \cite{2016Giesselmann}, we consider systems of equations of the form:
\begin{equation} \label{intro:Euler}
    \begin{aligned}
    \frac{\partial \rho}{\partial t}+\operatorname{div}(\rho u) &=0 \\ 
    \rho\left(\frac{\partial u    }{\partial t}+(u \cdot \nabla) u\right) &=-\rho\nabla \frac{\delta{\mathcal{E}}}{\delta \rho}(\rho)
    \end{aligned} \quad \quad y \in \mathbb{R}^d, \: t>0,
\end{equation}
where the functional
$$ \H[\rho,n]:= \K[\rho,n] + \E[\rho] = \int_{\mathbb{R}^d} \frac{|n|^2}{2\rho} \:dy  + \mathcal{E}[\rho], \quad n = \rho u, $$
is formally conserved. $\E[\rho]$ is a given interaction energy functional. The system is presented in a spatial description, with $y$ denoting the spatial coordinate. The Isentropic Euler, Euler-Poisson, and Euler-Korteweg equations are included in this framework. In this spatial description, the time evolution of the following quantity is monitored:
\begin{align*}
    \H[\rho,n\,|\,\bar\rho, \bar n]&:= \K[\rho,n\,|\,\bar\rho, \bar n] + \E[\rho\,|\,\bar\rho]\\
    &= \int_{\mathbb{R}^d} \frac{\rho(u-\bar u)^2}{2} \:dy + \E[\rho] - \E[\bar\rho] - \Big\langle\frac{\delta \E}{\delta \rho}[\bar\rho], (\rho-\bar \rho)\Big\rangle.
\end{align*}
Here,  the relative Hamiltonian functional is defined with respect to the state variables $(\rho,n)$. Notably, $\rho(t,y)$ and $n(t,y)$ represent the mass density and the momentum of the infinitesimal matter occupying the spatial coordinate $y$ at time $t$. In \cite{2016Giesselmann}, the authors demonstrated that, for two smooth solutions, the time derivative of the above quantity is formally given by:
\begin{equation} \label{intro:Edotrel}
    \begin{aligned}
     &\frac{d}{dt} \left( \E[\rho|\bar\rho] + \int_{\mathbb{R}^d} \frac{1}{2}\rho|u-\bar u|^2 \:dy\right) \\ &+\int_{\mathbb{R}^d} \rho \nabla_y \bar{u}:(u-\bar{u})\otimes(u-\bar{u}) \:dy - \int_{\mathbb{R}^d} \nabla_y \bar{u} : S(\rho|\bar\rho)\:dy = 0.
    \end{aligned}
\end{equation}
Here, $\rho \mapsto S(\rho)$ denotes the stress operator, defined in accordance with $\E[\rho]$, i.e. satisfying $\operatorname{div}(S(\rho)) = \rho \nabla\frac{\delta \E}{\delta \rho}[\bar\rho]$. The {\it relative stress} is given by $S(\rho|\bar\rho):= S(\rho) - S(\bar\rho) - \left\langle \frac{\delta S}{\delta \rho}(\bar\rho), \rho-\bar\rho\right\rangle$. In practice, one of the solutions $(\rho,n)$ is allowed to be a suitable energy-decreasing weak solution, and \eqref{intro:Edotrel} becomes an inequality. The last two integrals of \eqref{intro:Edotrel} are bounded by the relative Hamiltonian itself, leading to a stability estimate via Gronwall's inequality. Aside from the outcomes of \eqref{intro:Edotrel}, such as weak-strong uniqueness, asymptotic model convergences, and stability estimates, it is important to note that these stability studies are conducted in a formally unified framework, regardless of the specific form of $\E[\rho]$.

The second instance is the classical mechanics of $N$ particles. The motion can be described by Hamilton's ordinary differential equations. Given the Hamiltonian 
\begin{align*}
    \H(q_1,q_2, \cdots,q_N, p_1,p_2, \cdots,p_n)&:= \K(p_1,p_2,\cdots,p_N) + \mathcal{U}(q_1,q_2, \cdots,q_N) \\
    &=\sum_{i=1}^N \frac{|p_i|^2}{2M_i} + \mathcal{U}(q_1,q_2, \cdots,q_N),    
\end{align*}
where for $i=1,\cdots,N$, $(q_i,p_i)$ is a pair of position and momentum vectors and $M_i$ is the mass of the $i$-th particle,
$$ \dot{q}_i = \frac{\partial \H}{\partial p_i}, \quad \dot{p}_i = -\frac{\partial \H}{\partial q_i}, \quad i=1,\cdots,N.$$
This description is referential, indexed discretely by $i=1,\cdots,N$. The quantity of interest is
\begin{equation} \label{intro:Npart}
\begin{aligned}
    &\H[(q_i),(p_i)\,|\, (\bar{q}_i), (\bar{p}_i)]:=\H[(q_i),(p_i)] - \H[(\bar{q}_i), (\bar{p}_i)] \\
    &- \sum_{k=1}^N \partial_{q_k} \H[(\bar{q}_i),(\bar{p}_i)](q_k - \bar{q}_k) - \sum_{k=1}^N \partial_{p_k} \H[(\bar{q}_i),(\bar{p}_i)](p_k - \bar{p}_k)\\
    &= \sum_{i=1}^N \frac{|p_i-\bar{p}_i|^2}{2M_i} + \mathcal{U}[(q_i)] - \mathcal{U}[(\bar{q}_i)] - \sum_{k=1}^N \partial_{q_k} \mathcal{U}[(\bar{q}_i)](q_k - \bar{q}_k)\\
    &= \K[(p_i)\,|\,(\bar{p}_i)] + \mathcal{U}[(q_i)\,|\,(\bar{q}_i)].
\end{aligned}
\end{equation}
Here, the relative Hamiltonian is defined with respect to the state variables $\big( p_i, q_i\big)$ of position and momentum vectors. It is a calculus exercise to derive the time derivative formula of the above quantity. The time derivative formula is given by
\begin{equation} \label{intro:Npartid}
    \begin{aligned}
        \frac{d}{dt}&\Big(\K[(p_i)\,|\,(\bar{p}_i)] + \mathcal{U}[(q_i)\,|\,(\bar{q}_i)] \Big) \\
        &+ \sum_{k=1}^N \dot{\bar{q}}_k \bigg(-\partial_{q_k} \mathcal{U}[({q}_i)] + \partial_{q_k} \mathcal{U}[(\bar{q}_i)] + \sum_{\ell=1}^N \partial_{q_\ell}\partial_{q_k} \mathcal{U}[(\bar{q}_i)](q_\ell - \bar{q}_\ell) \bigg) = 0. 
    \end{aligned}
\end{equation}
Considering the physical dimensions of terms appearing in \eqref{intro:Npartid}, we call the quantity in the parenthesis in the summand the $k$-th {\it relative force} and the summand the $k$-th {\it relative work rate}. Equation \eqref{intro:Npartid} expresses how the relative Hamiltonian changes in time.

In the last instance, we turn our attention to the referential description of polyconvex elastodynamics, a model for the elastic deformation of a solid body. In this model, the elastic energy is given by 
$$ \E[F] =  \int_X W(F) \:dx, $$
where $X$ is the referential coordinate system, $F(t,\cdot)=D\eta(t,\cdot)$ denotes the deformation gradient, and $W$ stands for the stored energy density. If the stored energy density $W$ depends on $F$ through a convex function $g$ of minors of $F$, then the elastic energy is called polyconvex. For example, in the three-dimensional case ($d=3$), the stored energy density function can be written as $W(F) = g\circ \Psi(F)$, where $\Psi(F) = (F,\textrm{cof}\, F, \textrm{det}\, F)$. 

One specific formulation of polyconvex elastodynamics was introduced in \cite{1998Qin}, \cite{2001Demoulini} in such a way that the resulting system of equations becomes a hyperbolic system of conservation laws with a convex entropy. This formulation treats all minors of $F$ as state variables, and the time evolution equations of minors are appended to the system.   See \cite[Section 5.4 contingent entropies and polyconvexity]{2016Dafermos} for further details.
In the three-dimensional case, the quantity of interest is
\begin{align*}
    &\H[F,\textrm{cof}\, F, \textrm{det}\, F,m] - \H[\bar F,\textrm{cof}\, \bar F, \textrm{det}\, \bar F,\bar m] - \Big\langle\frac{\delta\H}{\delta m}[\bar F,\textrm{cof}\, \bar F, \textrm{det}\, \bar F, \bar m], m-\bar m \Big\rangle\\
    &- \Big\langle\frac{\delta\H}{\delta F}[\bar F,\textrm{cof}\, \bar F, \textrm{det}\, \bar F, \bar m], F-\bar F \Big\rangle - \Big\langle\frac{\delta\H}{\delta \textrm{cof}\, F}[\bar F,\textrm{cof}\, \bar F, \textrm{det}\, \bar F, \bar m], \textrm{cof}\,F-\textrm{cof}\, \bar F \Big\rangle \\
    &- \Big\langle\frac{\delta\H}{\delta \textrm{det}\, F}[\bar F,\textrm{cof}\, \bar F, \textrm{det}\, \bar F, \bar m], \textrm{det}\,F-\textrm{det}\, \bar F \Big\rangle.
\end{align*}

In the second and third instances, the descriptions are referential. Consequently, the state variables at a given time $t$ monitor the infinitesimal matter at a fixed reference point, denoted respectively by the index $i$ and the coordinate $x$.

Having considered the three instances previously discussed, we now shift our attention to our core issue. It appears that the implications of utilizing the relative Hamiltonian \eqref{introrelham} have remained unexplored, despite it being a direct generalization of \eqref{intro:Npart}. Specifically, the Hamiltonian takes the placement map $\eta(t,\cdot)$ as the primary state variable, and the momentum field $m(t,\cdot)$ as its conjugate variable, all within the context of the referential coordinate system that describes the motion.

The topic of our intended study has not been addressed before. This is, at least partially, due to the subtleties that arise when dealing with dimensions $d\ge2$. As observed in the first instance concerning fluid dynamics and the third instance involving polyconvex elastodynamics, in space dimensions $d\ge2$, the Hamiltonian functional depends on the deformation gradient $D\eta$ through the determinant $\textrm{det}\, D\eta$ (refer to the formal relationship between Eulerian density $\rho$ and $D\eta$ in Section \ref{EPderivation}) or other minors. Furthermore, the principle of material frame indifference necessitates a non-convex dependence, see \cite{1959Coleman}, \cite{1976Ball}, and \cite{2001Demoulini}. Consequently, for the majority of continuum mechanics models in space dimension $d\ge 2$,  the Hamiltonian functional does neither depend convexly on the state variables $D\eta$ nor on $\eta$. As a result, we lose the bridge from the relative entropy identity to the stability estimates, since the leading order expansion of the Hamiltonian up to the first order will not be positive definite.

Nevertheless, we think it is valuable to clarify the relative energy identity formula in terms of the state variable $(\eta(t,\cdot),m(t,\cdot))$. This is because confirming the identity and making use of it (in the presence of strict convexity) for stability analysis are two independent tasks.

In this regard, our first objective is to write the system \eqref{HamIntro} in referential description which formally implies the Euler-like fluid system in spatial description. Subsequently, we aim to formally derive a unified relative Hamiltonian identity formula \eqref{dotrel} that resembles \eqref{intro:Npartid}. Once this is accomplished, if the Hamiltonian depends on the state variables in a strictly convex manner, we can then obtain stability estimates. As a specific example, for which this is the case, we consider the Euler-Poisson system in one space dimension.

Despite the restricted applicability, the method, when applicable, yields stability statements that are quite strong. Among the rigorous results we present, one outcome involves a particular pressureless Euler-Poisson system in Section \ref{specialEP}. For two solutions of this system, we compute the $L^2$ difference, which remains non-increasing until the possible formation of a $\delta$-shock: 
$$\int_X \frac{|v-\bar v|^2}{2} + \frac{|\eta - \bar\eta_{a(t)}|^2}{2} \:\dr\bigg|_{t} \le \int_X \frac{|v-\bar v|^2}{2} + \frac{|\eta - \bar\eta_{a(0)}|^2}{2} \:\dr\bigg|_{t=0} \quad \text{for a.e. $t$.}$$
In this equation, $v$ is the velocity field, where $m=\rho_0 v$, and $\bar\eta_{a(t)}(t,\cdot)$ denotes the placement map, shifted by the constant $a(t)$, to ensure that the centers of mass of $\eta$ and $\bar\eta$ coincide. Even after the possible formation of a $\delta$-shock, the $L^2$ distance grows only at a rate of $t^{1/2}$. This yields a bound that is considerably stronger than what is typically obtainable from Gronwall's inequality.

The formulation of specific problems, subject to particular sets of regularity assumptions, is detailed in Section 3. We provide a rigorous justification for all calculations in subsequent stability estimates concerning Euler-Poisson systems in one spatial dimension, which are consequences of aforementioned specific regularity assumptions. It is important to have assumptions that accommodate physically meaningful weak solutions, such as shock waves. Given that pressureless models can exhibit severer singularities, we present two separate formulations: one for models with pressure and another for models without pressure. In Section 3.1, readers will find a review of the literature on the singularity formation of such systems. We also note that our stability results rely on the relative entropy method, which allows to compare two solutions, namely a weak solution and a strong solution, i.e. one of the solutions requires higher regularity. Therefore, our formulation is designed in such a way that the sufficiently permissive assumptions accommodating singularities apply only to the weak solution.

In cases where pressure is present, weak solutions are defined within the regularity class of bi-Lipschitz placement maps (see Section \ref{sec:(A)}). It is known that when pressure is present the formation of a $\delta$-shock, i.e., the creation of an atomic part in $\rho$ is prevented. The solution may still exhibit shock discontinuities and vacuums. Whether a vacuum can be generated from smooth initial data remains unknown, to the best of our knowledge. In this first formulation, since a $\delta$-shock is not anticipated, our formulation permits the weak solution to exhibit shock discontinuity but excludes the possibility of $\delta$-shock. Unfortunately, we are unable to address the vacuum case in the bi-Lipschitz class. 

In cases where pressure is absent, weak solutions are defined within the regularity class of monotone placement maps (see Section \ref{sec:(B)}). This formulation allows the formation of both vacuum and $\delta$-shocks. When this happens, the Eulerian density $\rho$ becomes singular in the sense that it has atomic parts or it touches $0$, respectively. In terms of properties of the map $\eta(t,\cdot)$, these cases correspond to a nontrivial interval over which the map $\eta(t,\cdot)$ is constant and a point of jump discontinuity of $\eta(t,\cdot)$, respectively. In particular, the map $\eta(t,\cdot)$ is always a monotone function of locally bounded variation. In particular, the set of monotone increasing placement maps is a cone. In regard to this, in the study of \cite{2013Brenier} the weak solution $\eta(t,\cdot)$ is assumed to incorporate an additional imposed acceleration, forcing the map to lie in the cone if the map $\eta(t,\cdot)$ reaches the boundary of the cone. We adopt their ideas in our formulation.

As previously mentioned, our method is limited to the comparison of the weak solution with a strong solution. For models with pressure, the strong solution cannot exhibit any discontinuities. In the pressureless model, both $\delta$-shocks and vacuums are prohibited for the strong solution. See Sections \ref{sec:(A)} and \ref{sec:(B)}.

The organization of this paper is as follows: In Section 2, we consider a Hamiltonian  \(\mathcal{H}\) such that \eqref{HamIntro} becomes the Euler-Poisson system. We formally demonstrate that the referential description \eqref{HamIntro} indeed implies the Euler-Poisson system. Importantly in this section we present the formal derivation of the relative Hamiltonian identity.

In Section 3 and subsequent sections, we consider Euler-Poisson systems in one space dimension as an example where the Hamiltonian  depends strictly convexly manner on the state variables. In Section 3, we provide a brief introduction to the problem and collect materials that are necessary for the following sections. In Section 4, we present stability estimates for a wide range of Euler-Poisson type systems, with or without pressure. We provide proofs for weak-strong uniqueness and the stability of rarefaction waves. In Section 5, we study a specific pressureless Euler-Poisson model, where the sharpest results are obtained. The results presented in this section take into account the possibility of vacuum and $\delta$-shocks in  the weak solution. We verify non-increasing \(L^2\) state differences before the \(\delta\)-shock forms. Furthermore, we prove  convergence to the gradient flow in the singular limit of large friction.

\section{Formal relative entropy identity of Hamiltonian flow} \label{sec:2}

The Euler equations for an isentropic compressible gas is an archetype of the problems we will discuss in this article. The system can be formally derived from the energy functional, called the {\it Hamiltonian}, given by
\begin{equation} \label{baro}\H[\rho, {n}] = \int_{\mathbb{R}^d} \frac{1}{2}\frac{| {n}|^2}{\rho}  + \rho^\gamma \:dy, \quad \gamma>1. \end{equation}

For a class of models describing a compressible motion of a body, being a Hamiltonian system certainly is an additional property besides being a  system of hyperbolic conservation laws. One of the most explicit ways to take advantage of this special property is the relative entropy technique. 
See the extensive work by \cite{2016Giesselmann}, where the relative entropy technique is applied to a wide range of purposes. We remark that one of the applications in \cite{2016Giesselmann} includes the Euler-Korteweg system with a non-convex pressure law.

Our goal is to investigate what happens if we use the placement map $\eta(t,\cdot)$ as one of the state variables. The relationship between the placement map $\eta(t,\cdot)$ and the Eulerian density $\rho$ is specified in \eqref{nomen}. Other than this, we follow the same process as in the classical relative entropy technique. We directly write with $\H = \H[\eta,m]$ the Hamiltonian system
\begin{equation} \label{Ham}
    \dot{\eta}=\frac{\delta \H}{\delta {m}}, \quad \dot{{m}}=-\frac{\delta \H}{\delta \eta}\,.
\end{equation}
 In the following section, we establish that \eqref{Ham} formally implies the Euler equations. 

This spirit of research, seeking comprehension by looking at problems from  both the placement map $\eta(t,x)$ perspective and the Eulerian density $\rho(t,y)$ perspective, has been vital in the study of gradient flows and inspires our work. We reference the celebrated works of \cite{2001Otto}, \cite{2008Ambrosio}.  

\subsection{Euler-Poisson system as a Hamiltonian flow} \label{EPderivation}
As a case for our study, we consider the Hamiltonian of the Euler-Poisson system. We establish that the equations \eqref{Ham}
formally imply the Euler-Poisson system. In order to connect Lagrangian and Eulerian descriptions, we provide a summary of the notations and terminology commonly used in continuum mechanics. We acknowledge that the computations made in Section \ref{sec:2} are formal. As an instance of this, the map $\eta(t,\cdot)$ is assumed to be smooth and bijective for each $t$, with a smooth inverse. This inverse is denoted by $\eta^{-1}(t,\cdot)$.

\noindent
\begin{flalign} \label{nomen}
    \textbf{Nomenclature : } &&
\end{flalign}
{\allowdisplaybreaks
\begin{align*}
 \quad& X = Y = \mathbb{R}^d, \\
 x \in X \quad&: \text{a point in referential or Lagrangian coordinate system},\\
 y \in Y \quad&: \text{a point in spatial or Eulerian coordinate system},\\
 \eta: (t,x) \mapsto y \quad&: \text{a motion map,}\\
 F^i_\alpha(t,x) = \frac{\partial \eta^i}{\partial x^\alpha}(t,x)\quad&: \text{$(i,\alpha)$-entry of deformation gradient matrix $F$,}\\
 J(t,x) = \det F(t,x) \quad&: \text{Jacobian determinant,}\\
 \rho_0(x) \quad&: \text{mass density to the Lebesgue measure of $X$,}\\
 \rho(t,y) \quad&: \text{mass density push-forwarded by $\eta(t,\cdot)$,}\\	
 \tau(t,x) = \frac{J(t,x)}{\rho_0(x)} \quad &: \text{specific volume,}\\
 {v}(t,x)=\dot{\eta}(t,x)\quad&: \text{velocity vector,}\\
 {u}(t,y)=v(t,\eta^{-1}(t,y))\quad&: \text{Eulerian velocity vector,}\\
 {m}(t,x)=\rho_0(x) {v}(t,x)\quad&: \text{momentum density,}\\ 
 n(t,y)=\rho(t,y)u(t,y) \quad&: \text{Eulerian momentum density.}
\end{align*}
The quantity $\frac{\partial \eta^i}{\partial x^\alpha}$ is referred to as a deformation gradient, a term commonly used in continuum mechanics, though here the map $\eta(t,\cdot)$ is a placement, not a deformation.
}

The Euler-Poisson system (in Eulerian description) is given as follows:
\begin{equation} \label{EP}
    \begin{aligned}\frac{\partial \rho}{\partial t}+\operatorname{div}(\rho u) &=0 \\\rho\left(\frac{\partial u}{\partial t}+(u \cdot \nabla) u\right) &=-\nabla p(\rho)-\rho \nabla c \\-\Delta c+\beta c &=\rho-(\rho),
    \end{aligned}
\end{equation}
where $\beta\ge 0$, and $(\rho)=\int_{\mathbb{R}^d} \rho\:dy$.

For this particular system, the Hamiltonian is represented as\footnotemark[2]:
\begin{equation}\label{EPham}
\begin{aligned}
 \tilde\H[\rho,n] &= \tilde\K[\rho,n] + \tilde\E[\rho], \quad (n=\rho u),\\
 \tilde\K[\rho,n] & = \int_Y \frac{1}{2}\frac{|n|^2}{\rho} \: dy,\\
 \tilde\E[\rho]&=\int_Y h(\rho) \:dy +\frac{1}{2} \iint_{Y\times Y} \rho(z) {K}(y-z) \rho(y) \:d z d y = \tilde\E_1[\rho] + \tilde\E_2[\rho]
\end{aligned}
\end{equation}
for a given $h(\cdot)$ and the kernel $K$ such that $c=K*\rho$ solves the Poisson equation $\eqref{EP}_3$. Especially, when $\beta = 0$, we may choose ${K}(y)=\Phi(y)+\frac{|y|^{2}}{2d}$, where $\Phi(y)$ is the fundamental solution of the Laplace's equation, i.e., $-\Delta \Phi = \delta(y)$. \footnotetext[2]{One might consider including $c$ within the state variables, writing 
$$\hat{\E}[\rho,c] = \int h(\rho) \:dy + \frac{1}{2} \int \rho c \: dy.$$}

The first step involves expressing the integrals in \eqref{EPham} within the referential coordinate system. Using the map $\eta(t,\cdot)$ we have
\begin{equation} \label{EPham2}
\begin{aligned}
 \tilde\K[\rho,n] &= \int_X \frac{|m|^2}{2\rho_0(x)} \:dx=:\K[m],\\
 \tilde\E_1[\rho] &= \int_X \psi(\tau) \rho_0(x)\:dx=:\E_1[\eta], \quad \psi(s) = sh\left(\frac{1}{s}\right),\\
 \tilde\E_2[\rho] &= \frac{1}{2}\iint_{X\times X} \rho_0(x) K\big(\eta(t,x) - \eta(t,x')\big) \rho_0(x') \: dxdx'=:\E_2[\eta].
\end{aligned}
\end{equation}
Upon comparing \eqref{EPham2} with \eqref{EPham}, we note that $\K=\K[m]$ and $\E=\E[\eta]$. 
This implies $$\frac{\delta\H}{\delta m} = \frac{\delta\K}{\delta m} \quad \text{and} \quad \frac{\delta\H}{\delta \eta} = \frac{\delta\E}{\delta \eta}.$$

In the second step, we formally compute the variations as follows:
\begin{align*}
 \left\langle\frac{\delta \K}{\delta m}, \varphi\right\rangle = \frac{d}{ds} \K[m +s\varphi] \Big|_{s=0} \quad \text{and} \quad  \left\langle\frac{\delta \E}{\delta \eta}, \phi\right\rangle = \frac{d}{ds} \E[\eta +s\phi] \Big|_{s=0}
\end{align*}
where $\varphi$ and $\phi$ are in $C_c^\infty(X)$.
It is straightforward to verify that Equation $\eqref{Ham}_1$ implies:
$$\dot{\eta} = \frac{m}{\rho_0} = v.$$

We are left to investigate the implication of Equation $\eqref{Ham}_2$. To evaluate $\left\langle-\frac{\delta \E_1}{\delta \eta},\phi\right\rangle$, we calculate the following:
$$-\psi^\prime(\tau) \lim_{\epsilon \rightarrow 0} \frac{1}{\epsilon} \big(\tau[\eta+\epsilon\phi] - \tau[\eta]\big).$$ 
In what follows, let $[A^i_\alpha]$ represent the matrix whose entries are $A^i_\alpha$ for $i,\alpha= 1,\cdots, d$. In addition, for a function $f$ on $X$, we define $\tilde f(t,y):=f(\eta^{-1}(t,y))$. We have:
\begin{align*}
    \det \left[\frac{\partial(\eta^i+\epsilon\phi^i)}{\partial x^\alpha}\right] &= \det\left[\frac{\partial \eta^i}{\partial x^\alpha} + \epsilon\frac{\partial\phi^i}{\partial x^\alpha}\right] = \det\left[\frac{\partial \eta^i}{\partial x^\alpha} + \epsilon\sum_j \frac{\partial\tilde\phi^i}{\partial y^j}\bigg|_{y=\eta(t,x)}\frac{\partial \eta^j}{\partial x^\alpha}\right]\\
    &= \det \left[\Big(\textrm{I} + \epsilon \nabla_y \tilde\phi\big|_{y=\eta(t,x)}\Big) \left[\frac{\partial \eta^{i}}{\partial x^\alpha}\right]\right]\\ 
    &=\Big(1 + \epsilon \tr \:\nabla_y \tilde\phi\big|_{y=\eta(t,x)}\Big) \det \left[\frac{\partial \eta^{i}}{\partial x^\alpha}\right] + \text{higher order terms}.
   \end{align*}
Thus, we have
\begin{align*}
 &\lim_{\epsilon \rightarrow 0} \frac{1}{\epsilon} \Big( \det \left[\frac{\partial(\eta^i+\epsilon\phi^i)}{\partial x^\alpha}\right] - \det \left[\frac{\partial\eta^i}{\partial x^\alpha}\right]\Big) = \det \left[\frac{\partial \eta^i}{\partial x^\alpha}\right] \div_y\tilde\phi\bigg|_{y=\eta(t,x)}, \\
 \left\langle-\frac{\delta\E_1}{\delta\eta},\phi\right\rangle &= -\int_X \psi^\prime(\tau)\,\tau \div_y \tilde\phi\big|_{y=\eta(t,x)} \,\rho_0(x) \: dx \\
 &= -\int_X \psi^\prime(\tau)\,\div_y \tilde\phi\big|_{y=\eta(t,x)} \,J(t,x) \: dx.
\end{align*}
Note that $\psi'(\tau) = h\left(\frac{1}{\tau}\right) - \frac{1}{\tau}h'\left(\frac{1}{\tau}\right) = h(\rho) - \rho h'(\rho)$, and the last integral can be written as
\begin{align}
 \int_Y \big(\rho h'(\rho)- h(\rho)\big) \div_y \tilde\phi \: dy 
 = -\int_Y \nabla_y\big(\rho h'(\rho)- h(\rho)\big) \tilde\phi \: dy. \nonumber
\end{align}
We write $\rho h^\prime(\rho) - h(\rho)=:p(\rho)$. Nextly, 
\begin{equation} \nonumber
    \begin{aligned}
        \left\langle-\frac{\delta \E_2}{\delta \eta}, \phi\right\rangle &=-\left.\frac{d}{d \epsilon}\right|_{\epsilon=0} \frac{1}{2} \iint_{X\times X} \rho_{0}(x) {K}(\eta(x)+\epsilon \phi(x)-\eta(x')-\epsilon \phi(x')) \rho_{0}(x') d x d x' \\
        &=-\frac{1}{2} \iint_{X\times X} \rho_{0}(x) D {K}(\eta(x)-\eta(x')) \cdot(\phi(x)-\phi(x')) \rho_{0}(x') d x d x' \\
        &=-\frac{1}{2} \int_X \rho_{0}(x) \phi(x) \cdot \int_X D {K}(\eta(x)-\eta(x')) \rho_{0}(x') d x' d x \\
        &+\frac{1}{2} \int_X \rho_{0}(x') \phi(x') \cdot \int_X D {K}(\eta(x)-\eta(x')) \rho_{0}(x) d x d x'.
        \end{aligned}
\end{equation}
If we can assume
\begin{equation} \nonumber
    {K}(y) \text { is an even function of } y \text { and } D {K}(y) \text { is an odd function of } y,
\end{equation}
which is the case for the radially symmetric kernel, 
then 
\begin{equation*}
    \left\langle-\frac{\delta \E_{2}}{\delta \eta}, \phi\right\rangle=-\int_X \rho_{0}(x) \phi(x) \cdot \int_X D{K}(\eta(x)-\eta(x')) \rho_{0}(x') d x' d x.
\end{equation*}
The double integral can be written as 
\begin{equation} \nonumber
    \begin{aligned}
        &-\int_Y \rho(y) \tilde\phi(y) \cdot \int_Y D {K}(y-y') \rho(y') d y d y'\\
        &=-\int _Y\rho(y) \tilde\phi(y) \cdot(D{K} * \rho)(y) d y \\
        &=-\int_Y \rho(y) \tilde\phi(y) \cdot \nabla_{y}c \:d y.
        \end{aligned}
\end{equation}
In conclusion,
\begin{align*}
\left\langle -\frac{\delta \H}{\delta \eta}, \phi \right\rangle &= - \int_Y \Big(\nabla_y p(\rho) + \rho\nabla_y c\Big)\tilde\phi \: dy.
\end{align*}
Since 
\begin{align*}
\int_X \dot{m}(t,x) \phi(x)\:dx &= \int_Y \rho \Big(\tilde v_t + (\tilde v \cdot \nabla_y ) \tilde v\Big)  \tilde\phi \:dy
\end{align*}
we recover the momentum equation $\eqref{EP}_2$. The Poisson equation $\eqref{EP}_3$ has been implemented by defining $c= K*\rho$. The derivation of the continuity equation $\eqref{EP}_1$ from the compatibility equation $\dot{F^i_\alpha} = \partial_\alpha v^i$, where $i,\alpha=1,\cdots, d$, is omitted.

\subsection{Formal relative Hamiltonian identity}
Having established the formal implications of \eqref{Ham}, we proceed to formally calculate the relative Hamiltonian identity. The Hamiltonian is given in the form 
\begin{equation*}
H[\eta, {m}]=\int_{X} \frac{|{m}|^{2}}{2\rho_{0}} d x+\mathcal{E}[\eta].
\end{equation*}

Let $(\eta, {m})$ and $(\bar{\eta}, \bar{{m}})$ be two smooth solutions of \eqref{Ham}. We define the relative Hamiltonian as
\begin{equation*} 
\H[\eta, {m} \mid \bar{\eta}, \bar{{m}}]=\H[\eta, {m}]-\H[\bar{\eta}, \bar{{m}}]-\left\langle\frac{\delta \H}{\delta \eta}(\bar{\eta}, \bar{{m}}), \eta-\bar{\eta}\right\rangle-\left\langle\frac{\delta \H}{\delta {m}}(\bar{\eta}, \bar{{m}}), {m}-\bar{{m}}\right\rangle.
\end{equation*}
We calculate the time derivative of the relative Hamiltonian. For simplicity, we write $\H=\H[\eta, {m}]$, $\bar{\H} = \H[\bar{\eta}, \bar{{m}}]$ and denote $\E$, $\bar\E$, $\K$, and $\bar\K$ in a similar manner. Considering that
$$\H[\eta,m] = \K[m] + \E[\eta] \quad\Longrightarrow \quad \frac{\delta \K}{\delta \eta} = \frac{\delta \E}{\delta m} = \frac{\delta^2 \H}{\delta \eta \delta m} = 0,$$
we have:
\begin{equation*}
    \begin{aligned}
        \frac{d}{dt}\H[\eta, m | \bar{\eta}, \bar{m}] = &~\frac{\delta \K}{\delta {m}}\dot{{m}} - \frac{\delta \bar{\K}}{\delta {m}}\dot{\bar{{m}}} - \frac{\delta^2 \bar{\K}}{\delta {m}^2} \dot{\bar{{m}}}({m}-\bar{m}) - \frac{\delta \bar{\K}}{\delta {m}}(\dot{{m}} - \dot{\bar{{m}}}) \\
        + &~\frac{\delta \mathcal{\E}}{\delta \eta}~\dot\eta~ - \frac{\delta \mathcal{\bar{\E}}}{\delta \eta}~\dot{\bar{\eta}}~ - \frac{\delta ^2 \bar{\mathcal{\E}}}{\delta \eta ^2}~\dot{\bar{\eta}}~(\eta ~-~ \bar{\eta}) - \frac{\delta \bar{\mathcal{\E}}}{\delta \eta}(\dot{\eta} ~-~ \dot{\bar{\eta}}) \\
        =&~\dot{\bar{{m}}}\left(\frac{\delta \K}{\delta m} - \frac{\delta \bar{\K}}{\delta m} - \frac{\delta^2 \bar{\K}}{\delta m^2}({m} - \bar{{m}})\right) \\
        +&~\dot{\bar{\eta}}~\left(\frac{\delta \mathcal{\E}}{\delta \eta}~ ~- \frac{\delta \bar{\mathcal{\E}}}{\delta \eta} - \frac{\delta^2 \bar{\mathcal{\E}}}{\delta \eta^2}(\eta ~-~ \bar{\eta})\right) \\
        +&~ \frac{\delta \K}{\delta m} \dot{{m}} - \frac{\delta {\K}}{\delta m}\dot{\bar{{m}}}  - \frac{\delta \bar{\K}}{\delta {m}}\dot{{m}}+ \frac{\delta\bar{\K}}{\delta {m}}\dot{\bar{{m}}} \\
        +&~\frac{\delta \mathcal{\E}}{\delta \eta} ~\dot{\eta}~ - \frac{\delta {\mathcal{\E}}}{\delta \eta}~\dot{\bar{\eta}}~  - \frac{\delta \bar{\mathcal{\E}}}{\delta \eta}~\dot{\eta}~+ \frac{\delta\bar{\mathcal{\E}}}{\delta \eta}~\dot{\bar{\eta}}~.
    \end{aligned}
\end{equation*}
Because of the equations \eqref{Ham}, 
\begin{equation*}
    \frac{\delta K}{\delta m} \dot{{m}} - \frac{\delta {K}}{\delta m}\dot{\bar{{m}}}  - \frac{\delta \bar{K}}{\delta {m}}\dot{{m}}+ \frac{\delta\bar{K}}{\delta {m}}\dot{\bar{{m}}} + \frac{\delta \mathcal{E}}{\delta \eta} \dot{{\eta}} - \frac{\delta {\mathcal{E}}}{\delta \eta}\dot{\bar{{\eta}}}  - \frac{\delta \bar{\mathcal{E}}}{\delta {\eta}}\dot{{\eta}}+ \frac{\delta\bar{\mathcal{E}}}{\delta {\eta}}\dot{\bar{{\eta}}} = 0.
\end{equation*}
Because $\K[m]$ is quadratic in $m$ ($\frac{\delta \K}{\delta m} = v$ and $\frac{\delta^2 \K}{\delta^2 m} = \frac{1}{\rho_0}$), 
$$\frac{\delta K}{\delta m} - \frac{\delta \bar{K}}{\delta m} - \frac{\delta^2 \bar{K}}{\delta m^2}({m} - \bar{{m}})=0.$$
Therefore the identity simplifies to 
\begin{equation} \label{dotrel}
    \frac{d}{d t} \H[\eta, {m} \mid \bar{\eta}, \bar{{m}}]+\left\langle\bar{{v}}, -\frac{\delta \E}{\delta \eta}+\frac{\delta \bar{\E}}{\delta \eta}+\frac{\delta^{2} \bar{\E}}{\delta \eta^{2}}(\eta-\bar{\eta})\right\rangle=0,
\end{equation}
where the formal interpretation is that $\frac{\delta \E}{\delta \eta}$ and $\frac{\delta \bar\E}{\delta \eta}$ are applied to $\bar{{v}}$, and $\frac{\delta^2 \bar\E}{\delta^2 \eta}$ is applied to $\bar{{v}}$ and $(\eta-\bar{\eta})$. 

We define  the abbreviations
$$ \mathbf{f}[\eta|\bar\eta]:=-\frac{\delta \E}{\delta \eta}+\frac{\delta \bar{\E}}{\delta \eta}+\frac{\delta^{2} \bar{\E}}{\delta \eta^{2}}(\eta-\bar{\eta}), \quad \left\langle\bar{{v}}, \mathbf{f}[\eta|\bar\eta]\right\rangle$$
and refer to them as {\it relative force} functional and the {\it relative work rate}, respectively. 

The formal identity \eqref{dotrel} is evidently simple and results in several remarkable estimates, which we will discuss later. The mathematical structure of \eqref{dotrel} bears similarities to the relative entropy results obtained for the Euler equations in \cite{1979DiPerna,1979Dafermosa,1979Dafermos}. However, there are two crucial differences. The first is that the results in \cite{1979DiPerna,1979Dafermosa,1979Dafermos} rely on the thermodynamic structure of the models, which is induced by the Gibbs-Duhem inequality, while our results stem from a Hamiltonian structure of the models, similar to the relative energy estimates in \cite{2016Giesselmann,2023Egger}. The second, and more apparent, difference lies in the choice of state variables; those used in this work differ from those in previous studies; note also the difference in state variables between \cite{2016Giesselmann} and \cite{2023Egger}.

To better understand the second difference, we offer two observations comparing \eqref{dotrel} and \eqref{intro:Edotrel}:

\begin{enumerate}
 \item The two relative kinetic energies $$\int_Y \frac{1}{2} \rho|u(y)-\bar u(y)|^2\:dy \quad \text{and} \quad \int_X \frac{1}{2} \rho_0|v(x)-\bar{v}(x)|^2 \:dx$$
are not identical. The integrand of the former measures the velocity differences observed at a specific spatial point $y$, whereas the latter measures the differences between the velocities of particles that share a common label $x$. These two particles may not be at the same spatial location. More specifically, the latter integral can be expressed in spatial coordinates using either $\eta$ or $\bar\eta$:
 $$ \int_Y \frac{1}{2} \rho\big|v\circ\eta^{-1}(y) - \bar{v}\circ\eta^{-1}(y)\big|^2 \:dy  = \int_Y \frac{1}{2} \bar\rho \big|v\circ\bar\eta^{-1}(y) - \bar{v}\circ\bar\eta^{-1}(y)\big|^2 \:dy.$$
 \item The convective 
 second term in \eqref{intro:Edotrel} is absent in \eqref{dotrel}. This absence is analogous to the change in the momentum balance where the convection term $ \div (\rho u \otimes u)$ is absorbed into the material time derivative. 
\end{enumerate}

In the following Sections \ref{sec:EPknown} to \ref{specialEP}, we present an example where \eqref{dotrel} delivers meaningful results. We investigate the Euler-Poisson systems in one space dimension, with all calculations justified.

\section{Interacting systems of Euler-Poisson type} \label{sec:EPknown}
The typical system under consideration is as follows:
\begin{equation} \label{Esys}
\begin{aligned}
\frac{\partial \rho}{\partial t}+\operatorname{div}(\rho u) &=0, \\
\rho\left(\frac{\partial u}{\partial t}+(u \cdot \nabla) u\right) &=-\nabla p(\rho)-\rho \nabla c, \\
c & = K*\rho,
\end{aligned} \quad \quad y \in \mathbb{R}^d, \quad t>0
\end{equation}
Here, we consider a given kernel $K$ and the pressure law $\rho \mapsto p(\rho)$. Throughout our investigation, we only consider radially symmetric kernels. Equation \eqref{Esys} is commonly referred to as the Euler-Poisson system, particularly when $c$ involves the inversion of a type of Poisson equation, such as
$$-\Delta c + \beta c = \sigma\big(\rho - (\rho)\big),$$
where $\beta \ge 0$, $\sigma \in \{-1,1\}$, along with some boundary condition when the domain is not the whole space. When the domain is the whole space and the kernel $K(y)= \sigma c_\alpha|y|^{-\alpha}$ for a suitable range of $\alpha$ and a constant $c_\alpha>0$, the system \eqref{Esys} is also often referred to as the Euler-Riesz system \cite{2021Choia,2022Choi}. 

From the perspective of interaction energy, and more specifically the gradient flows, cases where the kernel $K$ is radially increasing are referred to as \textit{attractive} cases, while those where the kernel $K$ is radially decreasing are referred to as \textit{repulsive} cases. For more details, please refer to the discussion in Section \ref{sec:convexity}.

In the following, we provide a brief overview of some background and previous results on continuum mechanical systems that involve interactions of a non-local and pressure type. Two different sets of assumptions are presented, which we  call the formulation (A) and (B) below. They are used in Section \ref{sec:EP} and Section \ref{specialEP}, respectively.

\subsection{Background and previous results}
Results on the Euler-Poisson system in $d \geq 2$ dimensions remain limited. The challenges involved in multiple space dimensions are discussed in \cite[Section 5]{2012Bae}.
The formation of finite-time singularities was analyzed for attractive kernel \cite{2008Chae}. For the repulsive kernels,  radial solutions have been investigated by \cite{2012Bae, 2023Rozanova}. Their studies reveal that the majority of radial solutions for the multi-dimensional Euler-Poisson system blow up. On the other hand, the study \cite{1998Guo} showed that the linearized Euler-Poisson system in three space dimension has a global-in-time solution if it has small irrotational initial data. More recent work on the Euler-Riesz system \cite{2022Choi} established the local well-posedness of smooth solutions and the formation of finite-time singularities.

Understanding of the systems in one spatial dimension has significantly developed. Unless otherwise specified, the reported results henceforth  pertain to the one-dimensional problem.

Mechanical systems without pressure have attracted substantial attention, e.g. \cite{2015Cavalletti}. In these systems, there is usually no mechanism preventing the formation of a $\delta$-shock, i.e., the creation of an atomic part in $\rho$. Particularly, there is extensive research on systems where motion is purely driven by inertia, and particles stick together inelastically upon collision and remain aggregated afterwards. 
This model is known as the sticky particle system. Numerous studies on the sticky particle system can be found in \cite{1996Weinan,1998Brenier,2009Gangbo,2009Natile} and references therein.

The authors of \cite{2013Brenier} conducted a comprehensive study on the sticky particle system with interactions. They formulated the problem as differential inclusions instead of equations similar to \eqref{HamIntro}. This was primarily because the map $\eta(t,\cdot)$ could incorporate an imposed acceleration, forcing the map to lie in the cone of monotone increasing maps, if the map $\eta(t,\cdot)$ reaches the boundary of the cone. Using the theory of differential inclusion, they demonstrated the existence of a Lagrangian solution, the stability of solutions, and the convergence of finite particle approximation schemes for a wide range of interaction types. Additional insights can be found in the following references \cite{2020Hynd, 2022Choi}.

Now, we examine previous findings on the Euler-Poisson system (the system with non-local interactions and pressure) in one spatial dimension. 
The Euler-Poisson system with pressure does not allow mass concentration in the form of $\delta$-shocks \cite{2008Tadmor}:
As long as the total mass is finite, it is shown in \cite{2008Tadmor} that both the velocity $u$ and the density $\rho$ remain uniformly bounded. However, $u_y$, the derivative of the velocity field, can experience a finite-time blow-up, resulting in a shock discontinuity. The threshold phenomena that distinguish between two potential outcomes, namely, finite-time shock discontinuity formation and global existence of a smooth solution, have been thoroughly examined   in \cite{1996Engelberg,2001Engelberg,2002Liu,2008Tadmor}. Surprisingly, the criteria for these threshold phenomena for specific models have been accurately quantified in terms of initial velocity and density. We do not attempt to provide an exhaustive list of the research conducted on the Euler-Poisson system. 
Instead, we refer readers to the key references \cite{2008Chae, 2016Guo}
and the references cited therein.

Aside from singularities like shock discontinuity and $\delta$-shock, the system may also exhibit vacuum states. Solutions with a vacuum exist for the isentropic Euler systems too: Self-similar solutions with a vacuum arising from Riemann initial data can be found in \cite{2000Chen}, and the existence theory proposed by \cite{1983DiPerna,1996Lions,2000Chena} relies on the compensated compactness approach, which does not treat the vacuum as  a special case. Alternatively, \cite{2009Jang,2015Jang} tackled the problem in multiple spatial dimensions and utilized a method that traces the vacuum boundary. They demonstrated the local well-posedness of smooth solutions with a vacuum. For a more comprehensive understanding of the vacuum problem, we direct readers to \cite{2011Jang}. 

The work presented in the following Sections \ref{sec:EP} and \ref{specialEP} is formulated based on previously established results. 
%

\subsection{Placement and motion maps}

Let $X=Y=\mathbb{R}$ serve as the referential and spatial coordinate systems respectively. The placement map $\eta: X \mapsto Y$ accounts for the spatial location of matter referenced by $x\in X$. We define a class of placement maps 
\begin{align*}
 {\textsf{S}}(X)&:= \left\{\eta : X \rightarrow Y ~|~ \text{$\eta$ is increasing}\right\}
\intertext{and a class of motion maps over a time interval $I=[a,b)$}
 {{\Orb}}(I\times X)&:= \left\{\eta : I\times X \rightarrow Y ~|~  \text{for all $t \in I$, \:$\eta(t,\cdot) \in {\textsf{S}}(X)$} \: \right\}.
\end{align*}

We currently do not consider any topology for ${\textsf{S}}(X)$. One can verify that ${\textsf{S}}(X)$ is a cone.
We fix a measure $\rho_0$ on $X$, representing the mass distribution in the referential frame. Suppose $\E : {\textsf{S}}(X) \rightarrow \mathbb{R}$ is a given interaction energy functional. It is typical that $\E[\eta]$ is an integral with respect to the measure $\rho_0$. Given these, we say a map $\eta \in {\textsf{S}}(X)$ has finite interaction energy if $\E[\eta]$ is bounded.
If a motion $\eta \in {{\Orb}}(I\times X)$ is such that for each $t\in I$, $\dot\eta(t,\cdot)\in L^1_{\rho_0}(X) \cap L^2_{\rho_0}(X)$, then we say $\eta$ has finite momentum and kinetic energy. Here, $\dot\eta(t,x)$ denotes $\partial_t\eta(t,x)$.

We also define a class of bi-Lipschitz placement maps on $X$
\begin{align*}
 \hat{\textsf{S}}(X)&:= \left\{\eta \in \textsf{S}(X) ~|~ 
\text{$\eta$ is injective, surjective, strictly increasing,} \right.\\
&~~~~~~~~~~~~~~~~~~~~\left.\text{$\eta$ and $\eta^{-1}$ are Lipschitz.}\:\right\} \\
\intertext{and a class of motion maps over a time segment interval $I=[a,b)$}
 \hat{{\Orb}}(I\times X)&:= \left\{\eta : I\times X \rightarrow Y ~|~  \text{for all $t \in I$, \:$\eta(t,\cdot) \in \hat{\textsf{S}}(X)$} \: \right\}.
\end{align*}

\subsection{Formulation (A) for Section \ref{sec:EP}} \label{sec:(A)}
Here we collect assumptions used in Section \ref{sec:EP}. We take $\rho_0$ as the Lebesgue measure restricted to an open subset $\tilde{X}$ of $X$, i.e.,
\begin{equation} \nonumber
 \rho_0 = \mathcal{L}\mres\tilde{X}.
\end{equation}
Having defined $\rho_0$, we consider an energy functional on $\hat{\textsf{S}}(X)$ of the form
\begin{equation} 
 \E = \E_{r} + \E_{a} + \E_{w}, \label{E}
\end{equation}
where each term, $\E_{r}$, $\E_{a}$, and $\E_{w}$, is defined as follows: The functionals $\E_r$ and $\E_a$ are non-local interaction energies of double integral form with respect to the measure $\rho_0 \times \rho_0$. We first fix an $\eta_* \in \hat{\textsf{S}}(X)$, and
\begin{equation} \label{ErEa}
    \begin{aligned}
        \E_r[\eta] &=\iint \Big({K}_r\big(\eta(x)-\eta(x')\big) -  {{K}_r\big(\eta_*(x)-\eta_*(x')\big)}\Big) d\rho_{0}(x)d\rho_0(x'), \\ 
        \E_a[\eta] &=\iint \Big({K}_a\big(\eta(x)-\eta(x')\big) -  {{K}_a\big(\eta_*(x)-\eta_*(x')\big)}\Big) d\rho_{0}(x)d\rho_0(x'),
    \end{aligned}
\end{equation}
where the kernels $K_r$ and $K_a$ are of power law type,
\begin{equation} \label{kernellaw}
    {K}_r(y) = -\frac{|y|^p}{p} \quad(-1< p \le 1, p\ne 0) \quad \text{ and }\quad {K}_a(y) = \frac{|y|^q}{q} \quad(q \ge 1).
\end{equation}
The choices of ${K}_r$ and ${K}_a$ ensure that they are symmetric and convex in the radial variable, with ${K}_r$ radially decreasing and ${K}_a$ radially increasing. We will provide further explanations for these choices in the following section. The rationale behind the subtraction of $K_r\big(\eta_*(x)-\eta_*(x')\big)$ and $K_a\big(\eta_*(x)-\eta_*(x')\big)$ in the integrands will also be provided.
Next, we define the energy that gives rise to the pressure. We adopt one appropriate for isentropic ideal gas. For any given $\eta\in \hat{\textsf{S}}(X)$, we take $\tau = \frac{\eta_x}{\rho_0}\ge 0$ as a $\rho_0$-a.e. well-defined function, which we call the specific volume. With this notation and for $\gamma>1$, we define $\E_{w}$ as
\begin{equation}\label{Ep}
\begin{aligned} 
 \E_{w}[\eta] &= \int_X \left(\left(\frac{\eta_x}{\rho_0}\right)^{1-\gamma}  - \left(\frac{\eta_{*x}}{\rho_0}\right)^{1-\gamma}\right)\: d\rho_0(x)\\
 &= \int_X \Big(\psi(\tau) - \psi(\tau_*) \Big) \: \dr, \quad \psi(s) = s^{1-\gamma}.
\end{aligned}
\end{equation}
Finally, we define the class of admissible maps $\hat{\mathcal{A}}(X)$ and motions $\hat{\mathcal{A}}(I\times X)$ as follows. A placement map $\eta \in \hat{\textsf{S}}(X)$ is \textit{admissible} if 
\begin{enumerate} [label=(\roman*)]
 \item $\Big|K_r\big(\eta(x) -\eta(x')\big)-K_r\big(\eta_*(x) -\eta_*(x')\big)\Big|$ and {$\allowbreak \Big|K_a\big(\eta(x) -\eta(x')\big)-K_a\big(\eta_*(x) -\eta_*(x')\big)\Big|$ }are integrable with respect to the measure $\rho_0\times \rho_0$,
 \item $\left|\left(\frac{\eta_x}{\rho_0}\right)^{1-\gamma}  - \left(\frac{\eta_{*x}}{\rho_0}\right)^{1-\gamma}\right|$ is integrable with repect to the measure $\rho_0$,
\end{enumerate}
and we write $\eta \in \hat{\mathcal{A}}(X)$. We define $\hat{\mathcal{A}}(I\times X)$ as the  collection of $\eta \in \hat{\Orb}(I\times X)$ for which $\eta(t,\cdot)$ is admissible, and $\dot\eta(t,\cdot) \in L^1_{\rho_0}(X)\cap L^2_{\rho_0}(X)$ for all $t\in I$.  

\subsection{Formulation (B) for Section \ref{specialEP}} \label{sec:(B)}
In this second formulation, $\E_{w}$ is dropped and for $\E_r$ and $\E_a$, $p=1$ and $q=2$ are chosen. Advantages of writing the relative energy with the placement map $\eta(t,\cdot)$ as state variable is best illustrated for this special presureless system. In this formulation we assume
$$\rho_0 \in L^1(X) \quad \text{and  $\int_X \dr = 1$.}$$
The interaction energy is normalized as
\begin{align} \label{energynormal}
 \E[\eta] = \frac{1}{4}\iint_{X\times X} -|\eta(x) - \eta(x')|+ |\eta(x) - \eta(x')|^2 \: \drr,
\end{align}
and we define
$$ K(y) := K_r(y) + K_a(y), \quad K_r(y) = -\frac{|y|}{4}, \quad K_a(y) = \frac{y^2}{4}.$$

To see the energy \eqref{energynormal} as that of the pressureless Euler-Poisson system in one space dimension (see Section \ref{EPderivation}), we take
$$ \hat{K}(y) = \hat{K}_r(y) + \hat{K}_a(y), \quad \hat{K}_r(y) = - \frac{|y|}{2}, \quad \hat{K}_a(y) = \frac{y^2}{2}.$$
This leads to 
\begin{align*}
&\E[\eta] = \frac{1}{2}\iint_{X\times X} \hat{K}\big(\eta(x)-\eta(x')\big) \:\drr, \\
&-\partial^2_{yy} \hat{K}_r(y) = \delta_0, \quad -\partial^2_{yy} \hat{K}_a(y) \equiv -1 
\end{align*}
and $c$ defined by 
\begin{align*}
 c(y) = \int_\mathbb{R} \hat{K}(y-z) \rho(z)\: dz
\end{align*}
solves the Poisson equation
\begin{align*}
 -\partial^2_{yy} c(y) = \rho(y) - (\rho), \quad (\rho):=\int_\mathbb{R} \rho(y) \:dy.
\end{align*}
We will mainly use  $K_r(y) = -\frac{|y|}{4}$ and $K_a(y) = \frac{y^2}{4}$ in what follows.

Based on this interaction energy we define the class of admissible motions ${\mathcal{A}}(I\times X)$ as the collection of $\eta \in {\Orb}(I\times X)$ satisfying 
\begin{equation} \label{assumption}
    \begin{aligned}
     1.& \quad \text{for each $t\in I$} \quad \eta(t,\cdot) \in L^1_{\rho_0}(X)\cap L^2_{\rho_0}(X),\\
     2.& \quad \text{for each $t\in I$} \quad \dot{\eta}(t,\cdot) \in L^1_{\rho_0}(X)\cap L^2_{\rho_0}(X). 
    \end{aligned}
\end{equation}

\subsection{Convexity of kernels for the formulation (A)} \label{sec:convexity}
We provide additional explanation regarding the form of our interaction energy, specifically the non-local energies $\E_r$ and $\E_a$. We only consider radially symmetric kernels. When these interaction energies are employed in the context of  gradient flows, a radially decreasing kernel ${K}_r$ is referred to as the \textit{repulsive} kernel, whereas a radially increasing kernel ${K}_a$  is referred to as the \textit{attractive} kernel. We keep this terminology.

When it comes to convexity, for the power law and logarithmic kernels, 
those that are convex in {\it radial variable} include
\begin{align*}
 &|y|^{p} \quad (p<0), \quad -\log(|y|), \quad -|y|^p \quad (0\le p \le 1)  & &\text{for repulsive cases and}\\
 &|y|^q \quad (q\ge 1)  &&\text{for attractive cases}.
\end{align*}
Here is a subtle point regarding the convexity of the kernel: For the repulsive kernels mentioned above, the kernel is not convex in the variable $y$, but rather in the radial variable $|y|$. Conversely, for the attractive kernels, the mapping $y \mapsto |y|^q$ is convex in $y$ for $q\ge 1$.

To make use of the relative entropy to obtain a stability estimate, we require the interaction energy to be strictly convex in its state variables, and the convexity  of the interaction energy is directly linked to the convexity of the kernel in $y$. As a result, the basic  relative entropy method is not applicable for the above-mentioned repulsive kernels. Notably, these cases include the Newtonian kernel $|y|^{2-d}$ for $d\ge 3$. The same requirement of being convex in $y$, not in $|y|$, is assumed in the seminal paper \cite{1997McCann}, where the concept of displacement convexity was introduced. The advanced use of the relative entropy method in multi-dimensions, where the lack of convexity leads to non-definite relative energies that are nevertheless managed by the large friction term, can be found in \cite{2021Choia}.

With that said, in this section, we present how the lack of convexity in repulsive cases can be resolved in one dimension. This is a consequence of $\eta$ being a monotone function.

The line $\left\{(x,x') ~|~ x=x'\right\}$ is $\rho_0\times\rho_0$-negligible under the assumption that $\rho_0$ is absolutely continuous with respect to the Lebesgue measure. Thus, we can express for instance $\E_r[\eta]$ of \eqref{ErEa} as
\begin{align*}
 &\iint_{x\ge x'} \Big({K}_r\big(\eta(x)-\eta(x')\big) -  {{K}_r\big(\eta_*(x)-\eta_*(x')\big)}\Big) d\rho_{0}(x)d\rho_0(x')\\
 + \: & \iint_{x\le x'} \Big({K}_r\big(\eta(x)-\eta(x')\big) -  {{K}_r\big(\eta_*(x)-\eta_*(x')\big)}\Big) d\rho_{0}(x)d\rho_0(x').
\end{align*}
Note that in the first integral, $\eta(x)-\eta(x')\ge 0$ due to the monotonicity of $\eta\in {\textsf{S}}(X)$. On the other hand, in the second integral, $\eta(x) - \eta(x')\le 0$. However, the calculation of the second integral is achieved via a change of variables, namely $z=x'$ and $z'=x$
\begin{align*}
&\iint_{z\ge z'} \Big({K}_r\big(\eta(z)-\eta(z')\big) -  {{K}_r\big(\eta_*(z)-\eta_*(z')\big)}\Big) d\rho_{0}(z)d\rho_0(z'),
\end{align*}
where radial symmetry of the kernel is employed in this derivation. Therefore, the energy can be calculated solely with the contribution from
$$2\iint_{x\ge x'} \Big({K}_r\big(\eta(x)-\eta(x')\big) -  {{K}_r\big(\eta_*(x)-\eta_*(x')\big)}\Big) d\rho_{0}(x)d\rho_0(x')$$
where the kernels are only evaluated at those $x, x'$ where $\eta(x)-\eta(x')\ge 0$. Thus, only the convexity of $K$ in the radial variable matters. This calculation does not generalize to higher dimensions. 

In one space dimension, if $p\le -1$, then $|y|^{p}$ is not locally integrable and we do not perceive this case to make much sense and is omitted from our consideration. A logarithmic kernel will also not be considered in this paper. 

Finally,  subtracting $K\big(\eta_*(x) - \eta_*(x'))$ ensures that the integral remains finite even if the domain $\tilde{X}$ is not bounded. We first remark that this subtraction does not affect the calculations of the first variation of energy
$$ \frac{d}{ds} \E[\eta +s\phi] \Big|_{s=0}$$
for any suitable test function $\phi$. Note that $|\eta(x)-\eta(x')|^q$ becomes large if $|x-x'|$ is large. In a bounded region of $X \times X$ with our choices in \eqref{kernellaw}, both
$|\eta(x) - \eta(x')|^p$ and $|\eta(x) - \eta(x')|^q$ are always integrable, so $\eta$ being of finite energy has nothing to do with $\eta_*$ in a bounded region. Therefore, fixing $\eta_*$ and requiring $\eta$ to have finite energy  assigns asymptotic boundary conditions as $|x| \rightarrow \infty$. 

\section{Relative Hamiltonian computation for the Euler-Poisson system in one space dimension} \label{sec:EP}
In this section, we  justify \eqref{dotrel} for the Euler-Poisson system in one space dimension. We use the formulation (A) in Section \ref{sec:(A)}.

\subsection{Dissipative weak solution and strong solution}
We define weak and strong flows for equation \eqref{Ham}
for the Hamiltonian
\begin{equation}\label{H}
\begin{aligned} 
 &\H[\eta,m] = \K[m] + \E[\eta], \quad \K[m] = \int_X \frac{m^2}{2\rho_0} \: dx, \\
 &\text{$\E$ is given by \eqref{E}-\eqref{Ep}.}
\end{aligned}
\end{equation}

Suppose $\eta\in \hat{\textsf{S}}(X)$ is admissible. Since $\eta$ is assumed to be bi-Lipschitz, as is the fixed map $\eta_*$, for $p$ and $q$ in \eqref{kernellaw} and $\gamma$ in \eqref{Ep}
there exist constants $E_1$, $E_2$, and $E_3$ such that 
\begin{equation} \label{suff}
    \begin{aligned}
    &\iint_{X \times X} |x-x'|^{p-1}\Big| \big(\eta(x)-\eta(x')\big) -\big(\eta_*(x)-\eta_*(x')\big)\Big| d\rho_0(x)d\rho_0(x') < E_1,\\
    &\iint_{X \times X} |x-x'|^{q-1}\Big| \big(\eta(x)-\eta(x')\big) -\big(\eta_*(x)-\eta_*(x')\big)\Big| d\rho_0(x)d\rho_0(x') < E_2,\\
    &\int_X \left|\frac{\eta_x(x) - \eta_{*x}(x)}{\rho_0}\right| \: \dr < E_3.
    \end{aligned}
\end{equation}
To be specific, $E_1$ and $E_3$ depend on  $\Lip(\eta)$ and parameters and  $E_2$ depends on $\Lip(\eta^{-1})$ and parameters. The detailed calculations for these are provided in the Appendix, see Lemma \ref{lem:const}

Given these observations, for the weak formulation of a Hamiltonian flow, we consider functions $\phi\in C^{0,1}(X)$ satisfying 
\begin{equation} \nonumber
 J_\alpha[\phi]:=\iint_{X\times X} |x-x'|^{\alpha-1}~| \phi(x) - \phi(x')| \:d\rho_0(x)d\rho_0(x') < \infty \quad \text{for $\alpha=p,q$} \label{phicond}
\end{equation}
and such that $\frac{\phi_x}{\rho_0}$ is $\rho_0$-integrable, as $\rho_0$ a.e. defined function.
For such a $\phi$, we define {\it virtual work} as the directional derivative
$$ \phi \mapsto -\frac{d}{ds} \E[\eta + s\phi] \Big|_{s=0}.$$
\begin{proposition} \label{continuity}
 Suppose $\E$ is given by the formula \eqref{H} and $\eta\in \hat{\textsf{S}}(X)$ is admissible. Then the functional $\mathbf{f}[\eta] : \phi \mapsto -\frac{d}{ds} \E[\eta + s\phi] \Big|_{s=0}$ is continuous on 
 $$\displaystyle\mathcal{C}=\left\{\phi \in C^{0,1}(X) ~|~ J_p[\phi] < \infty, \quad J_q[\phi] < \infty, \quad \text{$\frac{\phi_x}{\rho_0}$ is $\rho_0$-integrable}\right\}$$ and has the representation
 \begin{equation*} 
 \begin{aligned}
  &\mathbf{f}[\eta](\phi)=-\int_X (1-\gamma) \left(\frac{\eta_x}{\rho_0}\right)^{-\gamma}\left(\frac{\phi_x}{\rho_0}\right) \:d\rho_0(x)\\
  &+\iint_{X\times X} \sgn\Big(\eta(x) - \eta(x')\Big)\Big|\eta(x) - \eta(x')\Big|^{p-1}\Big(\phi(x)-\phi(x')\Big)\: d\rho_0(x)d\rho_0(x')\\
  &-\iint_{X\times X} \sgn\Big(\eta(x) - \eta(x')\Big)\Big|\eta(x) - \eta(x')\Big|^{q-1}\Big(\phi(x)-\phi(x')\Big)\: d\rho_0(x)d\rho_0(x').
 \end{aligned}
 \end{equation*}
\end{proposition}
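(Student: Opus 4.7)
By linearity of the directional derivative, I would split $\E=\E_w+\E_r+\E_a$ and compute $-\frac{d}{ds}\E_\bullet[\eta+s\phi]\big|_{s=0}$ separately on each piece, then collect the three expressions. The admissibility of $\eta$ together with the test-function conditions built into $\mathcal{C}$ will be used to justify, in each piece, interchanging the $d/ds$ with the integral via dominated convergence.

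For the local (pressure) term $\E_w$, the integrand is $\psi(\tau)-\psi(\tau_*)$ with $\tau[\eta+s\phi]=\tau+s\phi_x/\rho_0$ and $\psi(s)=s^{1-\gamma}$. Since $\eta$ is bi-Lipschitz and $\rho_0=\mathcal{L}\mres\tilde X$, the specific volume $\tau$ is bounded above and strictly away from $0$; the same then holds uniformly for $\tau[\eta+s\phi]$ for all sufficiently small $|s|$. Consequently $\psi'(\tau[\eta+s\phi])(\phi_x/\rho_0)$ admits an $s$-independent dominating function of the form $C\,|\phi_x/\rho_0|$, which is $\rho_0$-integrable by the definition of $\mathcal{C}$. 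Dominated convergence then delivers the first (single-integral) term in the claimed representation, with constant $-(1-\gamma)$ and integrand $\tau^{-\gamma}(\phi_x/\rho_0)$.

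For the nonlocal terms, the pointwise derivative inside the integrand is
\[
K'\!\bigl(\eta(x)-\eta(x')\bigr)\,(\phi(x)-\phi(x')),
\]
with $K_r'(y)=-|y|^{p-1}\sgn y$ and $K_a'(y)=|y|^{q-1}\sgn y$. The key estimate is that, for $|s|$ small, $\eta+s\phi$ remains bi-Lipschitz with constants close to those of $\eta$, so by the mean value theorem any intermediate point $\xi_s$ between $\eta(x)-\eta(x')$ and $\eta(x)+s\phi(x)-\eta(x')-s\phi(x')$ satisfies $c|x-x'|\le|\xi_s|\le C|x-x'|$ uniformly in small $s$. This yields uniform bounds $|K_r'(\xi_s)|\le C|x-x'|^{p-1}$ and $|K_a'(\xi_s)|\le C|x-x'|^{q-1}$, and hence the difference quotients are dominated by $C|x-x'|^{p-1}|\phi(x)-\phi(x')|$ and $C|x-x'|^{q-1}|\phi(x)-\phi(x')|$ respectively. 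These are $\rho_0\otimes\rho_0$-integrable precisely because $J_p[\phi],J_q[\phi]<\infty$. Dominated convergence then produces the two double-integral terms in the stated formula (the minus signs work out so that the repulsive contribution is $+|\eta(x)-\eta(x')|^{p-1}$ and the attractive contribution is $-|\eta(x)-\eta(x')|^{q-1}$).

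Continuity of $\mathbf{f}[\eta]$ on $\mathcal{C}$, interpreted with respect to the three natural seminorms $J_p[\phi]$, $J_q[\phi]$, and $\|\phi_x/\rho_0\|_{L^1_{\rho_0}}$ that enter the definition of $\mathcal{C}$, is then immediate from the representation: the first term is bounded by $|1-\gamma|\,\|\tau^{-\gamma}\|_{L^\infty}\,\|\phi_x/\rho_0\|_{L^1_{\rho_0}}$ (with $\|\tau^{-\gamma}\|_{L^\infty}$ finite by the bi-Lipschitz assumption), while the second and third are bounded, via the bi-Lipschitz estimate $|\eta(x)-\eta(x')|\ge c|x-x'|$ and $|\eta(x)-\eta(x')|\le C|x-x'|$, by $C\,J_p[\phi]$ and $C\,J_q[\phi]$ respectively. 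The main technical obstacle throughout is controlling the singularity of $K_r'$ at the origin when $p<1$; this is what forces the proof to invoke the \emph{lower} Lipschitz bound on $\eta$ (equivalently, the Lipschitz constant of $\eta^{-1}$) and explains why the admissibility constants in \eqref{suff} split into one pair depending on $\Lip(\eta)$ and another on $\Lip(\eta^{-1})$.
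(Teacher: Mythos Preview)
Your proposal is correct and follows essentially the same approach as the paper: split $\E=\E_w+\E_r+\E_a$, exploit the bi-Lipschitz bounds on $\eta$ together with the Lipschitz bound on $\phi$ to produce $s$-independent dominating functions (of the form $C|\phi_x/\rho_0|$ for the local term and $C|x-x'|^{\alpha-1}|\phi(x)-\phi(x')|$ for the nonlocal terms), and then invoke dominated convergence. The only cosmetic difference is that the paper writes out the difference quotients explicitly by factoring out $(\eta_x/\rho_0)^{1-\gamma}$ and $|\eta(x)-\eta(x')|^{p-1}$ and bounding the remaining factor $(1+\lambda\,\phi_x/\eta_x)^{-\gamma}$ etc.\ via the inequality $|s\,\phi_x/\eta_x|\le\tfrac12$, whereas you appeal to the mean value theorem and the uniform bi-Lipschitz bound on $\eta+s\phi$; both routes yield the same majorants.
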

\begin{proof}
Since $\eta^{-1}$ and $\phi$ are Lipschitz, and $\Big|\frac{\phi_x}{\eta_x}\Big| \le \Lip(\eta^{-1})\Lip(\phi)$, we can take $\eps_0>0$ so small that $\Big|s\frac{\phi_x}{\eta_x}\Big| \le \frac{1}{2}$ for $s\in(-\eps_0,\eps_0)$.
To use the Lebesgue dominated convergence theorem, we note that for $s\in(-\eps_0,\eps_0)$
\begin{align*}
    &\int_X \left|\frac{1}{s} \left\{ \Big(\frac{\eta_x + s\phi_x}{\rho_0}\Big)^{1-\gamma} - \Big(\frac{\eta_x}{\rho_0}\Big)^{1-\gamma} \right\} \right|\: \dr\\
 &= \int_X \left|\Big(\frac{\eta_x}{\rho_0}\Big)^{1-\gamma} \frac{1}{s}\int_0^s(1-\gamma)\Big(1 + \lambda\frac{\phi_x}{\eta_x}\Big)^{-\gamma}\left(\frac{\phi_x}{\eta_x}\right)\:d\lambda \right|\: \dr\\
 &\le 2^{\gamma}|1-\gamma|\int_X \Big(\frac{\eta_x}{\rho_0}\Big)^{1-\gamma} \left|\frac{\phi_x}{\eta_x}\right| \: \dr= 2^{\gamma}|1-\gamma|\int_X \Big(\frac{\eta_x}{\rho_0}\Big)^{-\gamma} \left|\frac{\phi_x}{\rho_0}\right| \: \dr.
\end{align*}
In addition, $\Big(\frac{\eta_x}{\rho_0}\Big)^{-\gamma}$ is bounded $\rho_0$-a.e. because $\eta^{-1}$ is Lipschitz and $\frac{\phi_x}{\rho_0} \in L^1_{\rho_0}(X)$. Thus the last integral is finite. Passing to the limit,
 \begin{align*}
  -\lim_{s \rightarrow 0+} \frac{1}{s} \int_X \Big(\frac{\eta_x + s\phi_x}{\rho_0}\Big)^{1-\gamma} -\Big(\frac{\eta_x}{\rho_0}\Big)^{1-\gamma} \: \dr  = -(1-\gamma)\int_X \left(\frac{\eta_x}{\rho_0}\right)^{-\gamma}\left(\frac{\phi_x}{\rho_0}\right) \: \dr.
 \end{align*}
For the non-local components, we have $\Big|\frac{\phi(x)-\phi(x')}{\eta(x) -\eta(x')}\Big| \le \Lip(\eta^{-1})\Lip(\phi)$, and we can take $\eps_0>0$ so small that $\Big|s\frac{\phi(x)-\phi(x')}{\eta(x) -\eta(x')}\Big| \le \Lip(\eta^{-1})\Lip(\phi) \le \frac{1}{2}$ for $s\in(-\eps_0,\eps_0)$. For $s \in (-\eps_0,\eps_0)$
\begin{align*}
  &\iint_{X\times X} \left|\frac{1}{s}K_r\Big( (\eta + s\phi)(x) - (\eta + s\phi)(x')\Big)- \frac{1}{s}K_r\Big(\eta(x) - \eta(x')\Big)\right| \: d\rho_0(x)d\rho_0(x') \\
  \le &\iint_{X\times X}\left|\frac{1}{s}\int_0^s \Big| (\eta + \lambda\phi)(x) - (\eta + \lambda\phi)(x')\Big|^{p-1}\: d\lambda\Big(\phi(x)-\phi(x')\Big)\right|\: d\rho_0(x)d\rho_0(x')\\ 
  = &\iint_{X\times X}\left|\frac{1}{s}\int_0^s \Big| \frac{1}{\eta(x)-\eta(x')}\Big|^{1-p} \Big| \frac{1}{1 + \lambda\frac{\phi(x)-\phi(x')}{\eta(x)-\eta(x')}}\Big|^{1-p}\: d\lambda\Big(\phi(x)-\phi(x')\Big)\right|\: d\rho_0(x)d\rho_0(x')\\ 
  \le &(2\Lip(\eta^{-1}))^{1-p}\iint_{X\times X} |x-x'|^{p-1} |\phi(x)-\phi(x')| \: d\rho_0(x)d\rho_0(x') < \infty
\end{align*}
because of \eqref{suff}. Taking the limit $s \rightarrow 0$ gives the result. Analogous arguments apply to $K_a$. Continuity follows easily from the representation. 
\end{proof}

Having established the first variation of the interaction energy, we proceed to define the weak and strong Hamiltonian flow.
\begin{definition} 
    Let $I=[a,b)$, $\eta \in \hat{\mathcal{A}}(I \times X)$, ${v}=\dot\eta$, ${m}= \rho_0 {v}$, and let the Hamiltonian $\H$ be given by \eqref{H}. We say that $\eta$ is a \textit{dissipative weak Hamiltonian flow} for $\H$ on $I$ if the following holds: 
    \begin{equation} \label{wksol}
        \begin{aligned}
         -&\int_I\int_{X}^{}{m}\dot{\phi}(t,x) \: dxdt = \int_I \mathbf{f}{[\eta(t,\cdot)]}\big(\phi(t,\cdot)\big) \: dt
        \end{aligned}
    \end{equation}
    for all $\phi\in C_c^{0,1}((a,b) \times X)$, and it also satisfies
    \begin{equation} \label{dissip}
        \begin{aligned}
        &-\int_I \H[\eta, {m}]\dot{\theta}(t)dt \leq 0 \quad &&\text{for any non-negative $\theta\in C^{0,1}_c((a,b))$ and} \\
        &\H[\eta,m](a+) \le \H[\eta,m](a).
        \end{aligned}
    \end{equation}
\end{definition}
\begin{remark} The condition $\eqref{dissip}$ implies that $t \mapsto \mathcal{H}[\eta,m](t)$ is decreasing on the interval $[a,b)$.
\end{remark}

\begin{remark} \label{wktest}
 Proposition \ref{continuity} shows that the equality in \eqref{wksol} holds for any $\phi=\varphi\theta$ with $\varphi \in \mathcal{C}$ and $\theta\in C_0^{0,1}((a,b))$. In particular, for $\eta$, $\tilde\eta$ in $\hat{\mathcal{A}}(I\times X)$, \eqref{wksol} holds for $\phi= (\eta- \tilde{\eta})\theta$.
\end{remark}
\begin{definition}
    We define a dissipative weak Hamiltonian flow $\bar\eta \in \hat{\mathcal{A}}(I\times X)$ to be a \textit{strong Hamiltonian flow} for $\H$ on $I$ if the following conditions are satisfied:
    \begin{enumerate}[label=(\roman*)]
    \item For each $t\in I$, 
    $$\ddot{\bar\eta}(t,\cdot), \dot{\bar{\eta}}_x(t,\cdot), \bar\eta_{xx}(t,\cdot) \in L^\infty_{\rho_0}, \quad J_p[{\dot{\bar\eta}}](t),  J_q[{\dot{\bar\eta}}](t) <\infty, \quad \frac{\dot{\bar{\eta}}_x(t,\cdot)}{\rho_0} \in L^1_{\rho_0}(X).$$ 
    \item For all $\phi\in C_c^{0,1}(X)$, and for a.e. $t\in I$,
    \begin{equation} \nonumber
    \int_X \dot{\bar{{m}}} \phi(x) \: dx = \mathbf{f}[\bar\eta](\phi).
    \end{equation}
    \item The two inequalities in $\eqref{dissip}$ are satisfied as equalities.
    \end{enumerate}
\end{definition}
Throughout the Section \ref{sec:EP}, we fix $\H$ as in \eqref{H} and $I=[a,b)$ and assume
\begin{equation} \label{hyp}
\begin{aligned}
    &\text{$\eta$ is a dissipative weak  flow for $\H$ on $I$,}\\
    &\text{$\bar\eta$ is a strong flow for $\H$ on $I$,}\\
    &\text{${v}=\dot\eta$, $\bar{v}=\dot{\bar{\eta}}$, ${m}= \rho_0 {v}$, and $\bar{m}= \rho_0 \bar{v}$.} 
\end{aligned}
\end{equation}

\subsection{Convexity and lower bound of relative Hamiltonian} \label{sec:rel}

We define relative energies between functions $\eta(t,\cdot)$, $\bar\eta(t,\cdot)$, $m(t,\cdot)$, and $\bar m (t,\cdot)$ for each time $t \in I$.
\begin{align} 
    \E_{w}[\eta | \bar{\eta}] 
     & \  :=\int_{X}^{}\psi(\tau) - \psi(\bar\tau) - \psi'(\bar\tau)\left( \tau - \bar{\tau} \right)d\rho_0(x) \label{exp1}\\
     & \ = \int_X\psi(\tau |\bar{\tau} ) d\rho_0(x), \quad \text{where $\psi(\tau)=\tau^{1-\gamma}$,}\nonumber \\
    \mathcal{E}_r[\eta | \bar{\eta}] 
    & \  :=\iint_{X \times X}{K}_r\big(\eta(x)-\eta(x')\big) - {K}_r\big(\bar{\eta}(x)-\bar{\eta}(x')\big) \nonumber\\ 
    & \ \quad - {K}_r'\big(\bar{\eta}(x) - \bar{\eta}(x')\big)\big( \eta(x) - \eta(x') - \bar{\eta}(x) + \bar{\eta}(x') \big) \: d\rho_0(x)d\rho_0(x') \label{exp2}\\
    & \ = \iint_{X \times X} \mathcal{K}_r(\eta | \bar{\eta}) \: d\rho_0(x)d\rho_0(x'), \nonumber\\
    \mathcal{E}_a[\eta | \bar{\eta}] 
    & \ :=\iint_{X \times X}{K}_a\big(\eta(x)-\eta(x')\big) - {K}_a\big(\bar{\eta}(x)-\bar{\eta}(x')\big) \nonumber \label{exp3}\\ 
    & \ \quad - {K}_a'\big(\bar{\eta}(x) - \bar{\eta}(x')\big)\big( \eta(x) - \eta(x') - \bar{\eta}(x) + \bar{\eta}(x') \big) \: d\rho_0(x)d\rho_0(x') \\
    & \ = \iint_{X \times X}{K}_a\big(\eta | \bar{\eta}\big) \: d\rho_0(x)d\rho_0(x') \nonumber\\
    \K[{m} | \bar{m}] 
     &\ :=\int_X \frac{|{m}|^2}{2\rho_0}- \frac{|\bar{m}|^2}{2\rho_0} - \frac{\bar{m}}{\rho_0}({m}-\bar{m})\:dx \nonumber\\ 
     &\ = \int_X \frac{|{m} - {\bar{m}|}^2}{2\rho_0}dx  = \int_X \frac{\rho_0 |{v} - {\bar{v}}|^2}{2} \: dx, \label{expK}
\end{align}
The relative Hamiltonian is given as
\begin{equation} \label{relHam}
    \begin{aligned}
    \H[\eta, {m}| \bar\eta, \bar{m}]&:= 
    \K[{m} | \bar{m}] + \E_{w}[\eta | \bar{\eta}] + \mathcal{E}_r[\eta | \bar{\eta}] + \mathcal{E}_a[\eta | \bar{\eta}].
    \end{aligned}
\end{equation}
Along with the definitions, we have introduced notations for the abbreviations of the integrands $\psi(\tau|\bar\tau)$, $K_r(\eta|\bar\eta)$, and $K_a(\eta|\bar\eta)$. We also define the notation
$$[f](x,x'):=f(x)-f(x')$$ 
for a function $f$ on $X$ for simplicity.

Now, we proceed to estimate the relative energies by calculating their upper and lower bounds. Since the dissipative weak solution $\eta \in \hat{\mathcal{A}}(I\times X)$ is bi-Lipschitz, even when it comes to the weak flow $\eta$, $||D\eta||_\infty$ and $||D\eta^{-1}||_\infty$ are available. Although avoding the use of these makes computations more cumbersome, we present estimates without using these whenever feasible.

\begin{proposition}[Finiteness of relative energies] \label{ubnd}
    Assume \eqref{hyp}. Then the expressions \eqref{exp1}, \eqref{exp2}, and \eqref{exp3} are all finite.
\end{proposition}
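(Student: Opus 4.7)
The plan is to bound each of the integrands in \eqref{exp1}, \eqref{exp2}, \eqref{exp3} pointwise by a quantity whose $\rho_0$- or $\rho_0\times\rho_0$-integrability follows from the admissibility conditions of Section \ref{sec:(A)} together with the bi-Lipschitz--driven bounds \eqref{suff}. All three relative quantities are of Taylor-remainder form centered at $\bar\tau$ or $[\bar\eta]$, so applying Taylor's theorem with integral remainder produces a quadratic factor in $(\tau-\bar\tau)$ or $([\eta]-[\bar\eta])$ multiplied by a second derivative, which I then estimate using bi-Lipschitz regularity. The key preliminary is that monotonicity of $\eta,\bar\eta \in \hat{\textsf{S}}(X)$ forces $y := [\eta](x,x')$ and $\bar y := [\bar\eta](x,x')$ to share the sign of $x-x'$, so by the symmetry $x \leftrightarrow x'$ of the integrand one may restrict the double integrals to $\{x>x'\}$, where both arguments lie strictly in $(0,\infty)$ and the kernels $K_r$ and $K_a$ are smooth.

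For the local term \eqref{exp1}, the bi-Lipschitz hypothesis yields constants $0<c<C$ with $\tau,\bar\tau \in [c,C]$ $\rho_0$-a.e., so $\psi(s)=s^{1-\gamma}$ and its derivatives are bounded on an interval containing all relevant values. Taylor's theorem then gives $0 \le \psi(\tau|\bar\tau) \le C'(\tau-\bar\tau)^2 \le C''|\tau-\bar\tau|$, where the last step uses boundedness of $\tau-\bar\tau$. The estimate $|\tau-\bar\tau|\le|\tau-\tau_*|+|\tau_*-\bar\tau|$ together with the third line of \eqref{suff} applied to both $\eta$ and $\bar\eta$ supplies the required integrability.

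For the non-local terms \eqref{exp2} and \eqref{exp3}, on $(0,\infty)$ one has $K_r''(y)=(1-p)y^{p-2}$ and $K_a''(y)=(q-1)y^{q-2}$. The bi-Lipschitz bounds $\ell|x-x'|\le y,\bar y \le L|x-x'|$ ensure that every convex combination of $y$ and $\bar y$ lies in the same interval, so the second derivatives entering the Taylor remainder are pointwise dominated by $C|x-x'|^{p-2}$ and $C|x-x'|^{q-2}$, respectively. Splitting the quadratic factor as $(y-\bar y)^2=|y-\bar y|\cdot|y-\bar y|$, using $|y-\bar y|\le(\Lip\eta+\Lip\bar\eta)|x-x'|$ for one copy and the triangle inequality $|y-\bar y|\le|[\eta-\eta_*]|+|[\bar\eta-\eta_*]|$ for the other, I obtain
\begin{equation*}
|\mathcal{K}_r(\eta|\bar\eta)|+|K_a(\eta|\bar\eta)| \le C\bigl(|x-x'|^{p-1}+|x-x'|^{q-1}\bigr)\bigl(|[\eta-\eta_*]|+|[\bar\eta-\eta_*]|\bigr),
\end{equation*}
which is $\rho_0\times\rho_0$-integrable by the first two lines of \eqref{suff} applied to both $\eta$ and $\bar\eta$.

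The main technical point I expect is controlling the singularity of $K_r''$ at the origin when $-1<p<1$ with $p\neq 0$: the factor $y^{p-2}$ blows up as $y\to 0^+$, and this is tamed precisely because the bi-Lipschitz lower bound $y\ge\ell|x-x'|$ combined with the distribution of $(y-\bar y)^2$ converts the singular second derivative into the weight $|x-x'|^{p-1}$ that is exactly what \eqref{suff} controls; the analogous issue for $K_a''$ when $1<q<2$ is handled identically by the same lower bound. The boundary cases $p=1$ and $q=1$ require no estimate at all, since $K_r$ and $K_a$ are linear on $(0,\infty)$ and the corresponding relative integrands vanish identically on $\{x>x'\}$.
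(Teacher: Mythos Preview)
Your proof is correct, but the route differs from the paper's. The paper does not pass through the Taylor integral remainder; instead it observes that each relative quantity is a three-term sum, where the first two terms, e.g.\ $K_r([\eta])-K_r([\eta_*])$ and $K_r([\bar\eta])-K_r([\eta_*])$, are integrable simply by admissibility of $\eta$ and $\bar\eta$, and then bounds only the linear correction term directly: $|K_r'([\bar\eta])\,[\eta-\bar\eta]| = |[\bar\eta]|^{p-1}|[\eta-\bar\eta]| \le \Lip(\bar\eta^{-1})^{1-p}|x-x'|^{p-1}\bigl(|[\eta-\eta_*]|+|[\bar\eta-\eta_*]|\bigr)$, and similarly for $K_a'$ and $\psi'$. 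Your approach via the second-order remainder treats the entire relative integrand at once and has the small advantage that it yields a sign-definite pointwise bound of the form $|x-x'|^{p-1}|[\eta-\bar\eta]|$, which is exactly the structure reused later in Theorem~\ref{lowerbound} and Proposition~\ref{pp}. The paper's approach, in turn, buys a cleaner dependence on constants: its bound on the correction term uses only $\Lip(\bar\eta)$ and $\Lip(\bar\eta^{-1})$, not those of the weak solution $\eta$, in keeping with the paper's stated preference to avoid the latter whenever feasible (your Taylor estimate necessarily invokes the lower Lipschitz bound for $\eta$ to control convex combinations $\lambda y+(1-\lambda)\bar y$ from below).
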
 
\begin{proof} 
    To validate finiteness of \eqref{exp2} and \eqref{exp3}, it suffices to observe that
\begin{align*}
 &\left|K_r'\big([\bar\eta](x,x')\big)[\eta-\bar\eta](x,x')\right| \\
 &= \big|[\bar\eta]\big|^{p-1} \big|[\eta-\bar\eta]\big|
 \\
 &\le ~ \Lip(\bar\eta^{-1}(t,\cdot))^{1-p} |x-x'|^{p-1}\big|[\eta-\eta_*]\big| + \big|[\bar\eta-\eta_*]\big|,
\end{align*}
and    
\begin{align*}
&\left|K'_a([\bar\eta])[\eta-\bar\eta]\right| \le \Lip(\bar\eta(t,\cdot))^{q-1}\big|[\eta-\eta_*]\big| + \big|[\bar\eta-\eta_*]\big|.
\end{align*}
The double integrals of the two above expressions are finite by \eqref{suff}. To validate finiteness of \eqref{exp1}, it suffices to observe that
\begin{align*}
 \left|\left(\frac{\rho_0}{\bar\eta_x}\right)^{\gamma} \left(\frac{\eta_x - \bar\eta_x}{\rho_0}\right)\right| \le C|\Lip(\bar\eta^{-1}(t,\cdot))|^\gamma \left|\frac{\eta_x - \bar\eta_x}{\rho_0}\right|,
\end{align*}
which is $\rho_0$-integrable by \eqref{suff}.
\end{proof}

Next, we make use of the convexity of the interaction energy to conduct a  calculation similar to \cite[Lemma 1]{2016Giesselmann}.

\begin{theorem}\label{lowerbound} Assume \eqref{hyp}. Then there exist positive constants $c_1$, $c_2$, and $c_3$, which depend only on $p$, $q$, $\gamma$, $\Lip(\bar\eta)$, and $\Lip(\bar\eta^{-1})$, satisfying the following for every $t\in I$: 
\begin{enumerate}[label={(\arabic*)}] \setcounter{enumi}{-1} 
 \item If $q=1$, then $\E_a[\eta | \bar\eta] = 0$. If $p=1$, then $\E_r[\eta | \bar\eta] = 0$. 
 \item If $q \in (1,2)$,
       \begin{equation} \label{Lq1}
       \begin{aligned}
           &\E_a[\eta|\bar\eta] \ge c_1 \left(\iint_{\frac{\eta(x)-\eta(x')}{x-x'} \le 3\Lip(\bar\eta)}\big|[\eta - \bar\eta]\big|^2 |x-x'|^{q-2}\:d\rho_0(x)d\rho_0(x')\right. \\
           & ~~\quad \quad \quad\quad+\left.\iint_{\frac{\eta(x)-\eta(x')}{x-x'} > 3\Lip(\bar\eta)}\big|[\eta - \bar\eta]\big|^q d\rho_0(x)d\rho_0(x')\right).
       \end{aligned}
       \end{equation}
 \item If $q\ge 2$,
       \begin{equation*}
        \E_a[\eta|\bar\eta] \ge c_2 \iint_{X \times X} \big|[\eta - \bar\eta]\big|^q d\rho_0(x)d\rho_0(x'). 
       \end{equation*}
 \item If $p\in (0,1)$,
 \begin{equation*} 
  \begin{aligned}
   &\E_r[\eta|\bar\eta] \ge c_3 \left(\iint_{\frac{\eta(x)-\eta(x')}{x-x'} \le 3\Lip(\bar\eta)}\big|[\eta - \bar\eta]\big|^2 |x-x'|^{p-2}\:d\rho_0(x)d\rho_0(x')\right. \\
 & ~~\quad \quad \quad\quad+\left.\iint_{\frac{\eta(x)-\eta(x')}{x-x'} > 3\Lip(\bar\eta)}\big|[\eta - \bar\eta]\big|^p d\rho_0(x)d\rho_0(x')\right).
  \end{aligned}
 \end{equation*}
\end{enumerate}
Furthermore, there exist positive constants $c_4$, and $c_5$ such that the following estimates hold for every $t\in I$. The constants depend on $p$, $q$, $\gamma$, $\Lip(\bar\eta)$, $\Lip(\bar\eta^{-1})$, and additionally on $\Lip(\eta)$, $\Lip(\eta^{-1})$, and the parameters $\beta\in (0,1)$ and  $\beta'\in(-1,0)$ below.
\begin{enumerate}[label={(\arabic*)}]
    \setcounter{enumi}{3} 
 \item For any $\beta \in(0,1)$ and $r(\beta):=\frac{\gamma}{1-\beta}>1$,
 \begin{equation*}
 \E_{w}[\eta|\bar\eta] \ge c_4 \left(\int_{\tau \le 3\Lip(\bar\eta)} (\tau - \bar\tau)^2 \:\dr + \left(\int_{\tau > 3\Lip(\bar\eta)} |\tau - \bar\tau|^\beta \: \dr\right)^{r(\beta)}\right). 
 \end{equation*}
\item If $p\in(-1,0)$, for any $\beta' \in(-p,1)$ and $r'(\beta'):=\frac{1-p}{1-p-\beta'}>1$
 \begin{equation*}
  \begin{aligned}
   &\E_r[\eta|\bar\eta] \ge c_5 \Bigg(\iint_{\frac{\eta(x)-\eta(x')}{x-x'} \le 3\Lip(\bar\eta)}\big|[\eta - \bar\eta]\big|^2 |x-x'|^{p-2}\:d\rho_0(x)d\rho_0(x') \\
 & ~~ \quad+\left(\iint_{\frac{\eta(x)-\eta(x')}{x-x'} > 3\Lip(\bar\eta)}\big|[\eta - \bar\eta]\big|^{\beta'+p}|x-x'|^{-\beta'} d\rho_0(x)d\rho_0(x')\right)^{r'(\beta')}\Bigg).
  \end{aligned}
 \end{equation*}
\end{enumerate}
\end{theorem}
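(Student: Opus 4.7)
All five non-trivial cases rest on Taylor's theorem with integral remainder applied to the convex generating function $f$ of the relevant Bregman divergence (so $f=K_a$, $K_r$, or $\psi$):
\[
f(a)-f(b)-f'(b)(a-b) \;=\; (a-b)^2\int_0^1 (1-s)\,f''\bigl(b+s(a-b)\bigr)\,ds.
\]
Case~(0) is immediate: for $q=1$ or $p=1$, $K_a(y)=|y|$ and $K_r(y)=-|y|$ are affine on each half-line, and the monotonicity of $\eta,\bar\eta\in\hat{\S}(X)$ ensures that $[\eta]:=\eta(x)-\eta(x')$ and $[\bar\eta]$ always share a sign, so the integrand of the Taylor remainder vanishes $\rho_0\otimes\rho_0$-a.e.

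For Cases~(1)--(3) my plan is to split $X\times X$ into the close region $\{(x,x'):[\eta]/(x-x')\le 3\Lip(\bar\eta)\}$ and its complement. In the close region both $|[\eta]|$ and $|[\bar\eta]|$ are bounded by $3\Lip(\bar\eta)|x-x'|$, so the interpolation point $\xi=[\bar\eta]+s([\eta]-[\bar\eta])$ satisfies $|\xi|\lesssim|x-x'|$; since the exponents $q-2$ and $p-2$ are negative in Cases~(1) and~(3), this yields $|\xi|^{q-2}\gtrsim|x-x'|^{q-2}$ and $|\xi|^{p-2}\gtrsim|x-x'|^{p-2}$, and hence the weighted $L^2$ bounds in the statement. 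In the far region $[\eta]\ge 3|[\bar\eta]|$ so $|[\eta]-[\bar\eta]|\asymp|[\eta]|$, and direct algebraic expansion shows that the leading term $|[\eta]|^q/q$ (resp.\ $|[\eta]|^p/|p|$) dominates the remaining Bregman pieces, giving the stated $L^q$- and $L^p$-bounds. Case~(2), $q\ge 2$, bypasses the split altogether by invoking the classical pointwise uniform-convexity inequality $|a|^q/q-|b|^q/q-\sgn(b)|b|^{q-1}(a-b)\ge c_q|a-b|^q$ (Clarkson/Hanner type) for same-signed $a,b$, which then integrates directly.

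Cases~(4) and~(5) follow the same close-region/far-region template. The close-region contribution is again quadratic: on $\{\tau\le 3\Lip(\bar\eta)\}$, Taylor together with $\bar\tau\in[1/\Lip(\bar\eta^{-1}),\Lip(\bar\eta)]$ keeps $\psi''$ (resp.\ $K_r''$) bounded away from~$0$ on the interpolation interval, giving $\psi(\tau|\bar\tau)\gtrsim(\tau-\bar\tau)^2$ and the analogous weighted $L^2$ bound for $K_r$. The twist is that the far-region lower bound is only linear in $u=|\tau-\bar\tau|$ (resp.\ $|[\eta]-[\bar\eta]|$) because $\psi$ grows only linearly in $\tau$: explicit expansion yields $\psi(\tau|\bar\tau)\gtrsim\bar\tau^{-\gamma}u$ and $K_r(\eta|\bar\eta)\gtrsim|[\bar\eta]|^{p-1}u$. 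To upgrade these linear-in-$u$ pointwise bounds into the stated $\bigl(\int|u|^\beta\,d\rho_0\bigr)^{r(\beta)}$ form, I would apply H\"older's inequality with conjugate exponents $1/\beta$, $1/(1-\beta)$ on the far region, trading one factor against $\E_w$ via the linear bound and absorbing the complementary factor into $c_4$ using the boundedness of $\bar\tau$ and the control of the $\rho_0$-measure of the far region by $\Lip(\eta)$. An analogous H\"older manipulation with weight $|x-x'|^{-\beta'}$ produces the exponent $r'(\beta')=(1-p)/(1-p-\beta')$ in Case~(5).

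The main technical obstacle is precisely this last H\"older bookkeeping: one must verify that the auxiliary integrals resulting from the interpolation remain finite under the bi-Lipschitz hypotheses on $\eta$ and $\bar\eta$, and confirm that the resulting constants depend only on the parameters listed in the theorem---in particular that the extra dependence on $\Lip(\eta)$, $\Lip(\eta^{-1})$, $\beta$, and $\beta'$ enters only through these auxiliary integrals. Everything else follows the same Bregman-divergence template as in \cite[Lemma~1]{2016Giesselmann}.
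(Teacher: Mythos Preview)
Your treatment of Cases~(0)--(3) is essentially the paper's: the same Taylor remainder (the paper parameterises it as a double integral $\int_0^1\int_0^s(\cdot)\,d\lambda\,ds$, you as $\int_0^1(1-s)(\cdot)\,ds$, which is equivalent), the same close/far split at $h=3\Lip(\bar\eta)$, and the same bounds on $h_\lambda^{q-2}$, $h_\lambda^{p-2}$ in each region. Your appeal to a Clarkson-type inequality in Case~(2) is exactly what the paper's computation proves in passing.

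The gap is in Cases~(4) and~(5). Your linear far-region bound $\psi(\tau\,|\,\bar\tau)\gtrsim\bar\tau^{-\gamma}u$ is correct, but combining it with \emph{standard} H\"older on $\int u^\beta\le(\int u)^\beta M^{1-\beta}$ produces
\[
\E_w\big|_{\text{far}}\;\ge\; c\,M^{-(1-\beta)/\beta}\Bigl(\int_{\text{far}} u^\beta\,\dr\Bigr)^{1/\beta},
\]
i.e.\ exponent $1/\beta$, not the stated $r(\beta)=\gamma/(1-\beta)$. These coincide only at the single value $\beta=1/(1+\gamma)$; for general $\beta\in(0,1)$ your estimate is a different (and for small $\beta$ strictly weaker) inequality, so the theorem as written is not proved. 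The same mismatch occurs in Case~(5): your scheme would give exponent $1/(\beta'+p)$ rather than $r'(\beta')=(1-p)/(1-p-\beta')$.

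The paper's device is to use a \emph{weaker} pointwise bound in the far region, namely
\[
\psi(\tau\,|\,\bar\tau)\;\ge\; c\,(\tau-\bar\tau)^{1-\gamma}
\qquad\text{(note }1-\gamma<0\text{)},
\]
obtained from the same Taylor integral by the cruder estimate $\tau_\lambda^{-1-\gamma}\ge\tau^{-1-\gamma}$, and then to apply the \emph{reverse} H\"older inequality with $s=(1-\beta)/\gamma\in(0,1)$. The exponents are chosen so that the reverse-H\"older denominator is exactly $(\int_{\text{far}}|\tau-\bar\tau|\,\dr)^{(1-s)/s}$, which is controlled by the a~priori constant $E_3$ from \eqref{suff}; this is what forces the power $1/s=\gamma/(1-\beta)=r(\beta)$. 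Case~(5) is parallel, with the far-region bound $K_r(\eta\,|\,\bar\eta)\ge c\,|[\eta-\bar\eta]|^p$ and the denominator controlled by $E_1$. So the specific exponents $r(\beta)$, $r'(\beta')$ in the statement are not arbitrary---they are dictated by the reverse-H\"older step, and your standard-H\"older route cannot reproduce them.
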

\begin{proof}
(Assertion 0). Let $q=1$ and $p=1$. Then 
\begin{align*}
 \E_a[\eta|\bar\eta] = -\E_r[\eta|\bar\eta]&= \iint |[\eta]| - |[\bar\eta]| -\sgn([\bar\eta])[\eta-\bar\eta] \:\drr\\
  &= \iint [\eta]\Big(\sgn([\eta]) - \sgn([\bar\eta])\Big)\:\drr = 0.
\end{align*}
In the last equality, we used that $[\eta]\ne 0 \Longrightarrow \sgn([\eta])=\sgn([\bar\eta]) = \sgn(x-x')$.

(Assertion 1). We introduce for each $(x,x')$, $x\ne x'$ abbreviations 
$$ h(x,x') = \frac{\eta(x) - \eta(x')}{x-x'}, \quad \bar{h}(x,x') = \frac{\bar\eta(x) - \bar\eta(x')}{x-x'}.$$
By monotonicity of $\eta$ and $\eta_*$, $h(x,x')$ and $h_*(x,x')$ are nonnegative, and
 \begin{align*}
  \big|[\eta](x,x')\big| = h(x,x')|x-x'|, \quad \big|[\eta_*](x,x')\big| = h_*(x,x')|x-x'|.
 \end{align*}
For $\rho_0\times\rho_0$-a.e. $(x,x')$,
\begin{align*}
K_a(\eta|\bar\eta) &=\frac{1}{q}\left(h^q - \bar h^q - q\bar h^{q-1}(h-\bar h)\right)|x-x'|^q \\
&= (q-1) \int_0^1 \int_0^s (h-\bar h)^2(h_\lambda)^{q-2} \: d\lambda ds\, |x-x'|^q,
\end{align*}
where $h_\lambda :=\lambda h + (1-\lambda) \bar h$  is nonnegative too. We divide the domain $X\times X$ into two regions, one where $h \le 3\Lip(\bar\eta)$ and the other where $h > 3\Lip(\bar\eta)$. If $h \le 3\Lip(\bar\eta)$ then $h_\lambda \le 3\Lip(\bar\eta)$ and consequently $$h_\lambda^{q-2}\ge (3\Lip(\bar\eta))^{q-2}.$$ If $h> 3\Lip(\bar\eta)$ then $h \ge h_\lambda$ and thus using $q-2<0$
\begin{align*}
 h_\lambda^{q-2}(h-\bar h)^2 
\ge h^{q-2}(h-\bar h)^2 = \left(\frac{h-\bar h}{h}\right)^{2-q} (h-\bar h)^{q} \ge \left( \frac{1}{2} \right)^{2-q} (h-\bar h)^{q}.
\end{align*}
In conclusion, we obtain \eqref{Lq1}.

(Assertion 2). In case $q \ge 2$, we have
\begin{align*}
 (h-\bar h)^2 (h_\lambda)^{q-2} &= |h-\bar h|^q \left(\frac{h_\lambda}{|h-\bar h|}\right)^{q-2} \ge |h-\bar h|^q\left\{
 \begin{aligned}
  (1-\lambda)^{q-2} \quad \text{if $\bar h \ge h$,}\\
  \lambda^{q-2} \quad \text{if $\bar h < h$}.
 \end{aligned}\right.
\end{align*}
By taking $\displaystyle c_3:= (q-1)\min\left\{ \int_0^1\int_0^s \lambda^{q-2} \:d\lambda ds, \int_0^1\int_0^s (1-\lambda)^{q-2} \:d\lambda ds\right\} >0$, the assertion follows.

(Assertion 3). Analogously to the proof of Assertion 1 we obtain
\begin{align}
K_r(\eta|\bar\eta) &=-\frac{1}{p}\left(h^p - \bar h^p - p\bar h^{p-1}(h-\bar h)\right)|x-x'|^p \nonumber\\
&= (1-p) \int_0^1 \int_0^s (h-\bar h)^2(h_\lambda)^{p-2} \: d\lambda ds\, |x-x'|^p. \label{pest}
\end{align}

(Assertion 4). We estimate $\int_{\tilde X} \psi(\tau|\bar\tau) \:dx$, where $\tilde{X}$ is an open set such that $\rho_0 = \mathcal{L}\mres \tilde{X}$. We write
\begin{align*}
 \psi(\tau|\bar\tau) = (1-\gamma)(-\gamma)\int_0^1\int_0^s (\lambda \tau + (1-\lambda) \bar\tau)^{-1-\gamma} (\tau - \bar\tau)^2 \:d\lambda ds.
\end{align*}
Note that $\tau_\lambda:=\lambda \tau + (1-\lambda)\bar\tau$ is nonnegative $\rho_0$-a.e.. If $\tau \le 3\Lip(\bar\eta)$, then $\tau_\lambda \le 3\Lip(\bar\eta)$ and consequently $$(\tau_\lambda)^{-1-\gamma} \ge (3\Lip(\bar\eta))^{-1-\gamma}.$$
In case $\tau > 3\Lip(\bar\eta)$, then $\tau\ge \tau_\lambda$ and thus
\begin{align*}
 \tau_\lambda^{-1-\gamma}(\tau-\bar\tau)^2 \ge \tau^{-1-\gamma}(\tau-\bar\tau)^2
\ge \left(\frac{\tau-\bar\tau}{\tau}\right)^{1+\gamma} (\tau-\bar\tau)^{1-\gamma} \ge \left( \frac{1}{2} \right)^{1+\gamma} (\tau-\bar\tau)^{1-\gamma}.
\end{align*}
We apply the reverse H\"older inequality \cite[Theorem 2.12]{2003Adams} to the integral in the last expression: For any $0<\beta <1$, let  
$$s = \frac{1-\beta}{\gamma} \in (0,1), \quad \delta = \frac{\beta}{s}>0 \quad \text{so that $\frac{(1-\gamma-\delta)s}{s-1} = 1$}.$$
We check that
\begin{align*}
    &\int_{\tilde{X} \cap \{\tau>3\Lip(\bar\eta)\}} (\tau-\bar\tau)^{1-\gamma} \:dx < \infty \quad \text{by Propostion \ref{ubnd}};\\
    &\text{if  $g:= (\tau - \bar\tau)^{1-\gamma-\delta}$ then $g\ne 0$ in $\tilde{X} \cap \{\tau>3\Lip(\bar\eta)\}$};\\
    &\int_{\tilde{X} \cap \{\tau>3\Lip(\bar\eta)\}} (\tau-\bar\tau)^{(1-\gamma-\delta) \frac{s}{s-1}} \:dx = \int_{\tilde{X} \cap \{\tau>3\Lip(\bar\eta)\}} (\tau-\bar\tau) \:dx \le E_3 < \infty 
\end{align*}
by \eqref{suff}.
The reverse H\"older inequality gives that
\begin{align*}
 \int_{\tilde{X} \cap \{\tau>3\Lip(\bar\eta)\}} (\tau-\bar\tau)^{1-\gamma} \:dx 
 &\ge \frac{\left(\int_{\tilde{X} \cap \{\tau>3\Lip(\bar\eta)\}} (\tau-\bar\tau)^{\delta s} \:dx \right)^{\frac{1}{s}}}{\left(\int_{\tilde{X} \cap \{\tau>3\Lip(\bar\eta)\}} (\tau-\bar\tau)^{(1-\gamma-\delta) \frac{s}{s-1}} \:dx \right)^{\frac{1-s}{s}}}\\
 & \ge \frac{\left(\int_{\tilde{X} \cap \{\tau>3\Lip(\bar\eta)\}} (\tau-\bar\tau)^{\delta s} \:dx \right)^{\frac{1}{s}}}{\left(\int_{\tilde{X}} |\tau-\bar\tau|^{(1-\gamma-\delta) \frac{s}{s-1}} \:dx \right)^{\frac{1-s}{s}}}
\end{align*}
and the assertion follows.

(Assertion 5). We consider \eqref{pest} for $p \in (-1,0)$. In the region where $h \le 3\Lip(\bar\eta)$ the integral is treated similarly as in assertion 3. In case $h > 3\Lip(\bar\eta)$ we use the reverse H\"older inequality. For any choice $-p<\beta' <1$, let
$$ s = \frac{1-p-\beta'}{1-p} \in (0,1), \quad \delta  = \frac{\beta' + p}{s} >0,$$
which implies that
$$ \frac{(p-\delta)s}{s-1} = 1, \quad (p-\delta)(p-1)s = \beta'.$$
Then, we have
\begin{align*}
&\iint_{h > 3\Lip(\bar\eta)} \big| [\eta - \bar\eta]\big|^p \: \drr\\
\ge &\frac{\left(\iint_{{\tilde{X}}^2\cap \{ h >3\Lip(\bar\eta)\}} \left(\big|[\eta - \bar\eta]\big|^\delta |x-x'|^{-(p-\delta)(p-1)}\right)^s \:dx \right)^{\frac{1}{s}}}{\left(\iint_{{\tilde{X}}^2 \cap \{h>3\Lip(\bar\eta)\}} \left(\big|[\eta - \bar\eta]\big|^{p-\delta}|x-x'|^{(p-\delta)(p-1)}\right)^{ \frac{s}{s-1}} \: \drr \right)^{\frac{1-s}{s}}}.
\end{align*}
The denominator is bounded by a constant dependent on $E_1$, which in turn depends on $\Lip(\eta^{-1})$.
\end{proof}

\begin{corollary} 
 \begin{enumerate}
  \item $\K[m|\bar m](t_0) = 0$ implies that $\dot{\eta}(t_0,\cdot) = \dot{\bar\eta}(t_0,\cdot)$ $\rho_0$-a.e.,
  \item $\E[\eta|\bar \eta](t_0)=0$ implies that $\eta(t_0,\cdot) = \bar\eta(t_0,\cdot) + C$ for any constant $C$ $\rho_0$-a.e..
 \end{enumerate}
\end{corollary}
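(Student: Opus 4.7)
The plan is to exploit non-negativity of each component of the relative Hamiltonian individually. For assertion~(1), I would read off the representation \eqref{expK}, i.e., $\K[m|\bar m] = \int_X \frac{\rho_0|v-\bar v|^2}{2}\,dx$, and conclude directly from the vanishing of a non-negative integrand that $v(t_0,\cdot) = \bar v(t_0,\cdot)$, equivalently $\dot{\eta}(t_0,\cdot) = \dot{\bar\eta}(t_0,\cdot)$, $\rho_0$-almost everywhere. This is the immediate, non-substantive part.

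For assertion~(2), the key observation is that Theorem~\ref{lowerbound} shows each of $\E_w[\eta|\bar\eta]$, $\E_r[\eta|\bar\eta]$, $\E_a[\eta|\bar\eta]$ is non-negative (assertions~(1)--(5), together with the trivial case~(0) when $p=1$ or $q=1$). Hence the hypothesis $\E[\eta|\bar\eta](t_0) = 0$ forces each of the three contributions to vanish separately. Since $\gamma > 1$ is always in force in formulation~(A), I would invoke assertion~(4) with any fixed $\beta\in(0,1)$: vanishing of $\E_w[\eta|\bar\eta](t_0)$ forces both the integral over $\{\tau\le 3\Lip(\bar\eta)\}$ of $(\tau-\bar\tau)^2$ and the $\beta$-th power integral over its complement to vanish. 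Both integrands being non-negative, this yields $\tau(t_0,\cdot) = \bar\tau(t_0,\cdot)$ $\rho_0$-a.e.

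Unpacking $\tau = \eta_x/\rho_0$ together with $\rho_0 = \mathcal{L}\mres \tilde{X}$, the previous line reads $\eta_x(t_0,\cdot) = \bar\eta_x(t_0,\cdot)$ Lebesgue-a.e.\ on $\tilde{X}$. Since both placement maps lie in $\hat{\S}(X)$, hence are Lipschitz, their difference is absolutely continuous with a.e.\ vanishing weak derivative on $\tilde{X}$; it is therefore locally constant on $\tilde{X}$, and on each connected component of $\tilde{X}$ one obtains $\eta(t_0,\cdot) = \bar\eta(t_0,\cdot) + C$ for an appropriate constant $C$, which is the claimed conclusion $\rho_0$-a.e.

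The main obstacle, which is mild since the work has essentially been done in Theorem~\ref{lowerbound}, is largely bookkeeping: separating the three relative-energy summands, verifying each is individually non-negative before extracting their vanishing, and translating cleanly between $\tau$, $\eta_x$, and $\eta$ via the defining relation on $\tilde{X}$. Notably, I would not need to use $\E_r[\eta|\bar\eta] = 0$ or $\E_a[\eta|\bar\eta] = 0$ at all; the $\E_w$-contribution alone suffices under formulation~(A) because $\gamma > 1$ (so $\psi$ is strictly convex), which is precisely why the pressure term is indispensable here for identifiability of the placement map.
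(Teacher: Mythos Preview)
The paper states this corollary without proof, treating it as immediate from Theorem~\ref{lowerbound}; your proposal correctly supplies the details in that spirit, and part~(1) is exactly right.

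For part~(2), your route through $\E_w$ alone is valid but yields only that $\eta-\bar\eta$ is constant on each connected component of $\tilde X$. The non-local lower bounds in Theorem~\ref{lowerbound} (assertions (1)--(3) and (5), applicable whenever $q>1$ or $p\in(-1,1)\setminus\{0,1\}$) deliver the conclusion more directly: vanishing of $\E_a[\eta|\bar\eta]$ or $\E_r[\eta|\bar\eta]$ forces $[\eta-\bar\eta](x,x')=0$ for $\rho_0\times\rho_0$-a.e.\ $(x,x')$, and Fubini then gives a single global constant with no connectedness hypothesis on $\tilde X$. This is presumably what the paper intends, given its later use in the weak--strong uniqueness proof where a single $C(t)$ appears. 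Your $\E_w$ argument remains the only available tool in the degenerate case $p=q=1$, where both non-local relative energies vanish identically by assertion~(0); in that edge case the global-constant conclusion genuinely requires $\tilde X$ connected, so your caveat about components is not superfluous.
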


\subsection{Relative Hamiltonian inequality} \label{sec:relham}
We now justify the calculations leading to equation \eqref{dotrel}. For $\eta$ and $\bar\eta$, which are a weak and a strong flow, respectively, we arrive at the {\it relative Hamiltonian inequality} as stated in Theorem \ref{ineqthm}. Before we prove Theorem \ref{ineqthm}, we establish the following estimates first.
\begin{proposition} \label{pp}
 Let $\eta,\bar\eta \in \hat{\textsf{S}}(X)$ be admissible and $\phi$ be Lipschitz. Then there exists a constant $C_1>0$ satisfying \eqref{e1} $\rho_0$-a.e., and constants $C_2,C_3>0$ satisfying \eqref{e2} and \eqref{e3}. The constants depend on $p$, $q$, $\gamma$, $\Lip(\phi)$, $\Lip(\bar\eta^{-1})$, and $\Lip(\eta^{-1})$. 
 \begin{align}
  \Big|\phi_x\Big(\psi'(\tau) - \psi'(\bar\tau) - \psi''(\bar\tau)(\tau - \bar\tau)\Big)\Big|  &\le C_1 \psi(\tau|\bar\tau), \label{e1}\\
  \Big|[\phi]\Big(K_r'([\eta]) - K_r'([\bar\eta]) - K_r''([\bar\eta])[\eta-\bar\eta]\Big)\Big| &\le C_2 K_r(\eta|\bar\eta), \label{e2}\\
   \Big|[\phi]\Big(K_a'([\eta]) - K_a'([\bar\eta]) - K_a''([\bar\eta])[\eta-\bar\eta]\Big)\Big| &\le C_3 K_a(\eta|\bar\eta) \label{e3}. 
 \end{align}
\end{proposition}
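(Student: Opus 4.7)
The plan is to treat all three inequalities by one mechanism: express the relative force (Taylor remainder of a first derivative) and the relative energy (Taylor remainder of the function) as integral remainders on the segment between the two arguments, and then compare them via a pointwise ratio bound supplied by the bi-Lipschitz hypothesis.

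For \eqref{e1}, write
\begin{align*}
\psi'(\tau) - \psi'(\bar\tau) - \psi''(\bar\tau)(\tau - \bar\tau) &= (\tau - \bar\tau)^2 \int_0^1 (1-s)\, \psi'''(\tau_s)\, ds, \\
\psi(\tau | \bar\tau) &= (\tau - \bar\tau)^2 \int_0^1 (1-s)\, \psi''(\tau_s)\, ds,
\end{align*}
with $\tau_s := \bar\tau + s(\tau - \bar\tau)$. For $\psi(s) = s^{1-\gamma}$ one computes directly $|\psi'''(s)| = (\gamma+1)\psi''(s)/s$. Because $\rho_0 = \mathcal{L}\mres\tilde X$, on $\tilde X$ we have $\tau = \eta_x$ and $\bar\tau = \bar\eta_x$, and the bi-Lipschitz hypothesis forces $\tau, \bar\tau \geq c_0 := \min(1/\Lip(\eta^{-1}), 1/\Lip(\bar\eta^{-1}))$ $\rho_0$-a.e., hence $\tau_s \ge c_0$. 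Combined with $|\phi_x| \le \Lip(\phi)$, \eqref{e1} follows at once with $C_1 = (\gamma+1)\Lip(\phi)/c_0$.

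For \eqref{e2} and \eqref{e3}, set $a := [\eta](x,x')$, $b := [\bar\eta](x,x')$ and $h_s := b + s(a - b)$. By strict monotonicity of $\eta, \bar\eta \in \hat{\textsf{S}}(X)$, both $a$ and $b$ carry the sign of $x - x'$; hence so does $h_s$, and $|h_s| \ge c_0 |x - x'|$ pointwise for $x \neq x'$. Coupled with $|[\phi](x, x')| \le \Lip(\phi)\,|x - x'|$, this yields the uniform bound $|[\phi]|/|h_s| \le \Lip(\phi)/c_0$. For $K_a(y) = |y|^q/q$ one has $|K_a'''(y)| = |q-2|\, K_a''(y)/|y|$ (away from $0$), and the Taylor integral representations
\begin{align*}
K_a'(a) - K_a'(b) - K_a''(b)(a-b) &= (a-b)^2 \int_0^1 (1-s)\, K_a'''(h_s)\, ds, \\
K_a(\eta | \bar\eta) &= (a-b)^2 \int_0^1 (1-s)\, K_a''(h_s)\, ds,
\end{align*}
combined with the pointwise ratio bound deliver \eqref{e3} with $C_3 = |q-2|\Lip(\phi)/c_0$. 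The verbatim calculation with $K_r(y) = -|y|^p/p$, using $K_r''(y) = (1-p)|y|^{p-2} \ge 0$, yields \eqref{e2} with $C_2 = |p-2|\Lip(\phi)/c_0$.

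The main technical point is that the singularities of $\psi''', K_r''', K_a'''$ at $0$ (arising for $p \in (-1,0) \cup (0,1)$ and $q \in (1,2)$) are exactly of the right strength to be absorbed: the third derivative is heavier than the second only by a factor $1/|h_s|$, and this is exactly the weight that the bounds $|[\phi]| \le \Lip(\phi)|x-x'|$ and $|h_s| \ge c_0|x-x'|$ cancel uniformly in $(x,x')$ and $s$. The degenerate values $p = 1$, $q = 1$, $q = 2$ need separate inspection because the third derivatives fail to exist as functions; in each case the left-hand side of the corresponding inequality vanishes identically ($p, q = 1$ because $\sgn([\eta]) = \sgn([\bar\eta])$ on the complement of the $\rho_0\otimes\rho_0$-null diagonal, and $q = 2$ because $K_a'' \equiv 1$), so the inequality holds trivially.
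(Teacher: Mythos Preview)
Your proof is correct and follows essentially the same route as the paper's: both express the relative force and relative energy via Taylor integral remainders along the segment between the two arguments, then exploit the identity $|\psi'''(s)| = (\gamma+1)\psi''(s)/s$ (and its $K_r,K_a$ analogues) together with the uniform bound on $|\phi_x|/\tau_s$ resp.\ $|[\phi]|/|h_s|$ coming from the bi-Lipschitz assumption. The only cosmetic difference is that the paper writes the remainder as a double integral $\int_0^1\!\int_0^s$ and bounds the ratio $\phi_x/\tau^\lambda$ directly, whereas you use the equivalent single integral with weight $(1-s)$ and bound $|\phi_x|$ and $1/\tau_s$ separately; your explicit handling of the degenerate endpoints $p=1$, $q\in\{1,2\}$ is a small addition over the paper's proof of this proposition.
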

\begin{proof} For $\rho_0$-a.e. $x$,
 \begin{align*}
  &\phi_x\psi'(\tau|\bar\tau)=\phi_x \Big(\psi'(\tau) - \psi'(\bar\tau) - \psi''(\bar\tau)(\tau - \bar\tau)\Big)\\
  &= \phi_x(\tau - \bar\tau)^2\int_0^1 \int_0^s \psi'''(\lambda \tau + (1-\lambda)\bar\tau)\: d\lambda ds\\
  &= (\tau - \bar\tau)^2(-\gamma+1)(-\gamma)(-\gamma-1) \int_0^1\int_0^s (\tau^\lambda)^{-\gamma-1}~\frac{\phi_x}{\tau^\lambda} \: d\lambda ds,
 \end{align*}
 where $\tau^\lambda = \lambda \tau + (1-\eta) \bar\tau$. Observe that 
\begin{align*}
 \left| \frac{\phi_x}{\lambda \eta_x + (1-\eta) \bar\eta_x} \right| \le \max \left\{ \left| \frac{\phi_x}{\eta_x}\right|, \left| \frac{\phi_x}{\bar\eta_x}\right|\right\} \le \Lip(\phi) \big( \Lip(\eta^{-1}) + \Lip(\bar\eta^{-1})\big)
\end{align*}
and \eqref{e1} follows. Similarly,
\begin{align*}
 &[\phi]K_r'(\eta | \bar\eta)=
 [\phi]\Big(K_r'([\eta]) - K_r'([\bar\eta]) - K_r''([\bar\eta])[\eta-\bar\eta]\Big)\\
 &=-(p-1)(p-2)[\eta-\bar\eta]^2\int_0^1 \int_0^s \Big|[\eta^\lambda]\Big|^{p-2} \frac{[\phi]}{[\eta^\lambda]} \:d\lambda ds,
\end{align*}
and 
$$\left|\frac{[\phi]}{[\lambda \eta + (1-\lambda)\bar\eta]} \right| \le  \Lip(\phi) \big( \Lip(\eta^{-1}) + \Lip(\bar\eta^{-1})\big),$$
and \eqref{e2} follows. Equation \eqref{e3} follows similarly.
\end{proof}
\begin{theorem}[Relative Hamiltonian inequality] \label{ineqthm}
Assume \eqref{hyp}. Then the map $t \mapsto \H[\eta, m|\bar{\eta}, \bar{m}](t)$ is a function of Bounded Variation on $I$, and for any $t_1 \in (a,b)$, 
    \begin{equation} \label{relineq}
        \begin{aligned}
            \H[\eta, m|\bar{\eta}, \bar{m}] (t_1+) - \H[\eta, m| \bar{\eta}, \bar{m}] (a)&\le  \H[\eta, m|\bar{\eta}, \bar{m}] (t_1-) - \H[\eta, m| \bar{\eta}, \bar{m}] (a)\\ & \leq  \int_a^{t_1} \int_X \left(\frac{\bar{{v}}_x}{\rho_0}\right) \psi'(\tau | \bar{\tau}) \:\dr dt  \\
            &  + \int_{a}^{{t_1}} \iint_{X \times X} [\bar{{v}}](x, x'){K}_a'(\eta | \bar\eta)d\rho_0(x)d\rho_0(x') dt \\
            &  + \int_{a}^{{t_1}} \iint_{X \times X} [\bar{{v}}](x, x'){K}_r'(\eta | \bar\eta)d\rho_0(x)d\rho_0(x') dt.
        \end{aligned}
    \end{equation}
\end{theorem}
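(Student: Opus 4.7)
The plan is to promote the formal identity \eqref{dotrel} to a distributional-in-time inequality by splitting the relative Hamiltonian into a monotone part and an absolutely continuous cross-term. Since $\bar\eta$ is a strong flow, the Hamiltonian equations $\dot{\bar\eta}=\bar{v}$ and $\dot{\bar{m}}=-\tfrac{\delta\E}{\delta\eta}[\bar\eta]$ hold pointwise in $t$, allowing me to rewrite the two pairings in the definition of the relative Hamiltonian as
$$\H[\eta,m\,|\,\bar\eta,\bar m] \;=\; \underbrace{\bigl(\H[\eta,m]-\H[\bar\eta,\bar m]\bigr)}_{=:A(t)} \;+\; \underbrace{\int_X \dot{\bar m}(\eta-\bar\eta)\,dx \,-\, \int_X \bar{v}(m-\bar m)\,dx}_{=:G(t)}.$$
Conservation of $\H[\bar\eta,\bar m]$ (from condition~(iii) of the strong-flow definition) together with the dissipation \eqref{dissip} shows that $A$ is non-increasing on $I$, hence BV with $A(t+)\le A(t-)$ everywhere. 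Once $G$ is shown to be absolutely continuous, this yields both the BV assertion and the left-to-right inequality in \eqref{relineq}.

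To compute $\tfrac{d}{dt}G$ I would test the weak equation \eqref{wksol} with $\phi(t,x)=\bar{v}(t,x)\theta(t)$ for $\theta\in C_c^{0,1}((a,b))$, which is admissible because condition~(i) of the strong-flow definition places $\bar{v}(t,\cdot)$ in the class $\mathcal{C}$ of Proposition~\ref{continuity} uniformly in $t$. Multiplying the pointwise strong identity $\int_X\dot{\bar m}\,\bar{v}\,dx=\mathbf{f}[\bar\eta](\bar{v})$ by $\theta$ and integrating by parts in $t$ yields a companion formula; subtracting the two and using the $\rho_0$-a.e.\ cancellation $\dot{\bar m}(v-\bar{v})=\dot{\bar{v}}(m-\bar m)$ (a consequence of $\dot{\bar m}=\rho_0\dot{\bar{v}}$), the mixed terms drop out and I obtain, in the sense of distributions on $(a,b)$,
$$\tfrac{d}{dt}G \;=\; \int_X \ddot{\bar m}\,(\eta-\bar\eta)\,dx \;+\; \mathbf{f}[\bar\eta](\bar{v}) \;-\; \mathbf{f}[\eta](\bar{v}).$$
Local integrability in $t$ of each piece follows from the Lipschitz controls of Lemma~\ref{lem:const} and the Proposition~\ref{continuity} representation of $\mathbf{f}$, so $G$ is absolutely continuous.

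To match the claimed right-hand side, I differentiate the strong equation once more in $t$ using the first-variation formula of Proposition~\ref{continuity} applied termwise to \eqref{ErEa}--\eqref{Ep}; this produces $\ddot{\bar m}$ together with the pairing identity $\int_X\ddot{\bar m}(\eta-\bar\eta)\,dx = -\langle \bar{v},\tfrac{\delta^{2}\E}{\delta\eta^{2}}[\bar\eta](\eta-\bar\eta)\rangle$. Substituting and regrouping rewrites $\tfrac{d}{dt}G$ as $-\langle\bar{v},\mathbf{f}[\eta\,|\,\bar\eta]\rangle$, which by the explicit representations is exactly the integrand sum $\int_X(\bar{v}_x/\rho_0)\psi'(\tau|\bar\tau)\,\dr + \iint[\bar{v}]K'_a(\eta|\bar\eta)\,\drr + \iint[\bar{v}]K'_r(\eta|\bar\eta)\,\drr$ appearing in \eqref{relineq}. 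Integrability at every step is ensured by Proposition~\ref{pp}, which dominates each of these integrands by the corresponding relative-energy density, finite by Proposition~\ref{ubnd}. Combining $\tfrac{d}{dt}A\le 0$ (as a non-positive distribution) with the formula for $\tfrac{d}{dt}G$ and integrating on $(a,t_1)$ gives \eqref{relineq} with upper endpoint $t_1-$; the $t_1+$ bound then follows from $A(t_1+)\le A(t_1-)$ and the continuity of $G$.

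The step I expect to be the most delicate is the integration by parts in time that produced the distributional formula for $\tfrac{d}{dt}G$. Since $\dot m$ only exists through the distributional identity \eqref{wksol}, the test $\phi=\bar{v}\theta$ must be carefully verified to belong to the admissible class, and the resulting time function $t\mapsto\mathbf{f}[\eta(t,\cdot)](\bar{v}(t,\cdot))$ must be shown to lie in $L^1_{\mathrm{loc}}(I)$. Both requirements reduce to the bi-Lipschitz bounds from admissibility, the uniform estimates \eqref{suff}, and the explicit representation in Proposition~\ref{continuity}; once this is secured, the remainder of the argument is bookkeeping via the convexity estimates of Proposition~\ref{pp}.
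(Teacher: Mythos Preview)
Your overall architecture coincides with the paper's: split $\H[\eta,m\,|\,\bar\eta,\bar m]=A+G$ with $A=\H-\bar\H$ monotone by \eqref{dissip} and strong conservation, and show $G$ is absolutely continuous with derivative equal to the relative work rate; then localise with a trapezoidal $\theta$. Testing \eqref{wksol} with $\bar v\theta$ is also exactly what the paper does.

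The gap is in your differentiation of the first piece of $G$, namely $t\mapsto \mathbf{f}[\bar\eta(t,\cdot)](\eta(t,\cdot)-\bar\eta(t,\cdot))$. Your formula $\tfrac{d}{dt}G=\int_X\ddot{\bar m}(\eta-\bar\eta)\,dx+\mathbf{f}[\bar\eta](\bar v)-\mathbf{f}[\eta](\bar v)$ hides a product-rule term $\mathbf{f}[\bar\eta](v-\bar v)$ that you cancel pointwise against $\int_X\dot{\bar v}(m-\bar m)\,dx$. But under the stated hypotheses neither side of that cancellation is directly available: computing $\ddot{\bar m}$ as a function means $\rho_0\dddot{\bar\eta}$, which is not assumed; computing it instead ``via the first-variation formula'' forces you to differentiate the representation of $\mathbf{f}[\bar\eta](\eta-\bar\eta)$ in $t$, and the term $\psi'(\bar\tau)\,\dot\tau$ (and analogous kernel terms) requires $\dot\tau=v_x/\rho_0$, i.e.\ a spatial derivative of the weak velocity $v=\dot\eta$, which is not controlled for the dissipative weak flow. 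The paper circumvents precisely this obstruction: it introduces smooth approximants $\eta^\epsilon-\bar\eta^\epsilon$ with $\dot\eta^\epsilon_x-\dot{\bar\eta}^\epsilon_x$ available, tests the \emph{strong} equation against $(\dot\eta^\epsilon-\dot{\bar\eta}^\epsilon)\theta$, integrates by parts in $t$ inside the explicit representation, and only then passes to the limit---see \eqref{secondeqn}. That step replaces your formal identity $\int_X\ddot{\bar m}(\eta-\bar\eta)\,dx=-\langle\bar v,\tfrac{\delta^2\E}{\delta\eta^2}[\bar\eta](\eta-\bar\eta)\rangle$ by something provable under the available regularity. So the delicate point you flagged is real, but it is located not at the admissibility of $\bar v\theta$ for \eqref{wksol} (that is handled by Remark~\ref{wktest}), rather at testing the strong equation with $v-\bar v$, which must go through an approximation.
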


\begin{proof}[Proof of Theorem \ref{ineqthm}]
From \eqref{hyp} and \eqref{relHam}, we obtain that for any $\theta \in C_0^{0,1}((a,b))$ nonnegative
\begin{equation} \label{firsteqn}
 \begin{aligned}
        -\int_I &\H[\eta, {m} | \bar{\eta}, {\bar{m}}] \dot\theta(t)\:dt \\
        &\le -\int_I \bigg( - \left\langle \frac{\delta \K}{\delta {m}}(\bar{m}),{m} - \bar{m}\right\rangle -\left\langle \frac{\delta\E}{\delta \eta}(\bar{\eta}), \eta - \bar\eta\right\rangle\bigg)\dot\theta(t) \: dt\\
        & = \int_I\int_X \bar{v}({m}-\bar{{m}})\dot\theta dxdt\\
        & +\int_I\int_X \psi'(\bar\tau)(\tau - \bar\tau)\dot\theta \: \dr dt \\
        & + \int_I\iint_{X \times X} K'\big([\bar{\eta}](x, x')\big)\big([\eta](x, x') - [\bar{\eta}](x, x')\big)\dot\theta \: \drr dt.
\end{aligned}
\end{equation}
Asumming the regularities of strong solution $\bar\eta$, $\left|\frac{\bar{\eta}_x}{\rho_0}\right|^{-\gamma}\frac{|\eta_x - \bar\eta_x|}{\rho_0} \in L^1_{\rho_0}(X)$ and $(|x-x'|^{p-1} + |x-x'|^{q-1})|[\eta - \bar\eta]|\in L^1_{\rho_0\times\rho_0(X^2)}.$ Therefore we can write
\begin{align*}
        \int_I & \int_{X} \psi'(\bar\tau)(\tau - \bar\tau) \dot\theta \:\dr dt\\
        & = \int_I\int_X \Big( \big(\theta\psi'(\bar\tau)\big)^\bdot -  \theta\psi''(\bar\tau)\frac{\bar{v}_x}{\rho_0}\Big)(\tau - \bar\tau) \:\dr dt, \\
        \int_I & \iint_{X\times X} {K}'\big([\bar{\eta}]\big)[\eta-\bar{\eta}]\dot\theta \: \drr dt,\\
        & = \int_I \iint_{X\times X} \Big(\big(\theta K'([\bar\eta]) \big)^{\boldsymbol{\cdot}} - \theta K''([\bar\eta])[\bar v]\Big)[\eta - \bar\eta] \:\drr dt.\\
        \intertext{Also,}
        \int_I&\int_X \bar{v}({m}-\bar{{m}})\dot\theta dxdt = \int_I\int_X \Big(\big(\theta \bar{v}\big)^\bdot - \theta \dot{\bar{v}}\Big)({m}-\bar{{m}}) \: dxdt.
\end{align*}
Now, consider an approximation $(\eta^\epsilon - \bar\eta^\epsilon)_{\epsilon>0}$ such that for each $t\in I$, $(\eta^\epsilon-\bar\eta^{\epsilon})(t,\cdot)$, $(\dot{\eta}^\epsilon-\dot{\bar\eta}^{\epsilon})(t,\cdot)$, and $(\dot{\eta}_x^\epsilon-\dot{\bar\eta}_x^\epsilon)(t,\cdot)$ are in $C_c^\infty(X)$ and $(|x-x'|^{p-1} + |x-x'|^{q-1})[\eta^\epsilon - \bar\eta^\epsilon]$, $\frac{\eta_x^\epsilon - \bar\eta_x^\epsilon}{\rho_0}$, and $\dot{\eta}^\epsilon - \dot{\bar\eta}^\epsilon$ strongly converge to their repective limits in $L^1_{\rho_0\times\rho_0}$, $L^1_{\rho_0}$, and $L^2_{\rho_0}$. 

Now, $(\dot{\eta}^\epsilon - \dot{\bar\eta}^\epsilon)\theta$ can be used as test function for the strong solution, and thus
\begin{equation}\label{secondeqn}
\begin{aligned}
 \int_I\int_X -\theta \dot{\bar{v}}(m-\bar m) \:dx dt &= \lim_{\epsilon \rightarrow 0}  \int_I\int_X -\theta \dot{\bar{m}}(\dot\eta^\epsilon -\dot{\bar\eta}^\epsilon) \:dx dt\\
 &= \lim_{\epsilon \rightarrow 0}\int_I\int_X \psi'(\bar\tau)\theta\left(\frac{\dot{\eta}^\epsilon_x - \dot{\bar\eta}^\epsilon_x}{\rho_0}\right) \: \dr dt \\
 &+ \lim_{\epsilon \rightarrow 0}\int_I\iint_{X\times X} K'([\bar\eta])\theta  [\dot{\eta}^\epsilon-\dot{\bar\eta}^\epsilon] \:\drr dt\\
 &=\lim_{\epsilon \rightarrow 0}-\int_I\int_X \big(\theta\psi'(\bar\tau)\big)^\bdot \left(\frac{{\eta}^\epsilon_x - {\bar\eta}^\epsilon_x}{\rho_0}\right) \:\dr dt \\
 &+\lim_{\epsilon \rightarrow 0}-\int_I\iint_{X\times X} \big(\theta K'([\bar\eta])\big)^\bdot [\eta^\epsilon-\bar\eta^\epsilon] \:\drr dt \\
 &= -\int_I\int_X \big(\theta\psi'(\bar\tau)\big)^\bdot \left(\frac{{\eta}_x - {\bar\eta}_x}{\rho_0}\right) \:\dr dt \\
 &-\int_I\iint_{X\times X} \big(\theta K'([\bar\eta])\big)^\bdot [\eta-\bar\eta] \:\drr dt.
\end{aligned}
\end{equation}
After canceling out terms, the remaining terms are
\begin{align*}
 &\int_I\int_X \big(\theta \bar{v}\big)^\bdot ({m}-\bar{{m}}) \:dxdt-\int_I\theta \int_X \left(\frac{\bar{v}_x}{\rho_0}\right)\psi''(\bar\tau) (\tau - \bar\tau)\: \dr dt\\
 & -\int_I\iint_{X\times X}\theta  K''([\bar\eta])[\eta-\bar\eta] \:\drr dt.\\
 \intertext{By using $\theta\bar{v}$ as a test function for the dissipative weak solution we obtain}
 &= \int_I \theta \left[\int_X \left(\frac{\bar{v}_x}{\rho_0}\right) \Big( \psi'(\tau) - \psi'(\bar\tau) - \psi''(\bar\tau)(\tau - \bar\tau)\Big)\: \dr\right]  dt\\
 &+ \int_I \theta \left[\int_{X\times X} [\bar v] \Big( K'([\eta]) - K'([\bar\eta]) - K''([\bar\eta])[\eta-\bar\eta]\Big)\: \drr\right] dt.
\end{align*}
Considering the estimates \eqref{e1}-\eqref{e3} and Proposition \ref{ubnd}, it is evident that the two integrands in square brackets are uniformly bounded for $t \in I$. Additionally, it is important to note that the equalities from the third line of \eqref{firsteqn} hold for $\theta \in C_0^{0,1}(a,b)$, without the requirement of being nonnegative. Consequently, this implies that the mapping $t \mapsto \bigg( - \left\langle \frac{\delta \K}{\delta {m}}(\bar{m}),{m} - \bar{m}\right\rangle -\left\langle \frac{\delta\E}{\delta \eta}(\bar{\eta}), \eta - \bar\eta\right\rangle\bigg)(t)$ is absolutely continuous. Considering that $\bar\H(t)$ is a constant function and $\H(t)$ monotonically decreases, we conclude that $\allowbreak t \mapsto \H[\eta,m|\bar\eta,\bar m](t)$ is a function of bounded variation satisfying for any $t \in I$
\begin{equation} \label{jump}
    \H[\eta,m|\bar\eta,\bar m](t+) - \H[\eta,m|\bar\eta,\bar m](t-) = \H[\eta,m](t+) - \H[\eta,m](t-).
\end{equation}
Now, let $\epsilon>0$ be sufficiently small so that $a+\epsilon< t_1-\epsilon$ and define
\begin{equation*} 
    \theta_{{t_1},\epsilon}(t) = 
    \begin{cases}
        \frac{t-a}{\epsilon} & a \leq t < a + \epsilon, \\
        1 & a + \epsilon \le t <  {t_1}-\epsilon,\\
        \frac{{t_1}-t}{\epsilon} & {t_1}-\epsilon \le t < {t_1},\\
        0 & \text{otherwise}
    \end{cases}
\end{equation*}
to use in place of $\theta$. Take $\epsilon \rightarrow 0$ to obtain
    \begin{equation*} 
        \begin{aligned}
            \H[\eta, m|\bar{\eta}, \bar{m}] ({t_1}-) - \H[\eta, m| \bar{\eta}, \bar{m}] (a+) & \leq  \int_{a}^{t_1} \int_X \left(\frac{\bar{{v}}_x}{\rho_0}\right) \psi'(\tau | \bar{\tau}) \:\dr dt  \\
            &  + \int_{a}^{t_1} \iint_{X \times X} [\bar{{v}}](x, x'){K}_a'(\eta | \bar\eta)d\rho_0(x)d\rho_0(x') dt \\
            &  + \int_{a}^{t_1} \iint_{X \times X} [\bar{{v}}](x, x'){K}_r'(\eta | \bar\eta)d\rho_0(x)d\rho_0(x') dt.
        \end{aligned}
    \end{equation*}
Last, use $\eqref{dissip}_2$ and \eqref{jump} to obtain \eqref{relineq}.
\end{proof}
\begin{corollary} 
There exists $C_0>0$ such that for any ${t_1} \in I$
    \begin{equation} \label{e*}
        \begin{aligned}
            \H[\eta, {m} | \bar{\eta}, \bar{{m}}]({t_1}-) \leq \H[\eta, {m} | \bar{\eta}, \bar{{m}}](a) + C_0\int_{a}^{{t_1}} \mathcal{E}[\eta | \bar{\eta}]dt.
        \end{aligned}
    \end{equation}
\end{corollary}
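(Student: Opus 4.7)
The plan is to derive \eqref{e*} directly from the relative Hamiltonian inequality \eqref{relineq} in Theorem \ref{ineqthm}, by applying the pointwise convexity bounds \eqref{e1}--\eqref{e3} of Proposition \ref{pp} with the choice $\phi = \bar v(t,\cdot)$ at each fixed time $t \in [a, t_1]$.

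First, for fixed $t \in [a, t_1]$, I would verify the hypotheses of Proposition \ref{pp} with $\phi := \bar v(t,\cdot)$. Since $\bar\eta$ is a strong Hamiltonian flow, $\bar v_x(t,\cdot) = \dot{\bar\eta}_x(t,\cdot) \in L^\infty_{\rho_0}$, so $\bar v(t,\cdot)$ is Lipschitz. The admissibility $\eta, \bar\eta \in \hat{\mathcal{A}}(I\times X)$ supplies finiteness of $\Lip(\eta(t,\cdot)^{-1})$ and $\Lip(\bar\eta(t,\cdot)^{-1})$. Then \eqref{e1}--\eqref{e3} yield constants $C_1(t), C_2(t), C_3(t)$, depending on $p$, $q$, $\gamma$ and these Lipschitz norms, such that $\rho_0$-a.e.\ and $\rho_0 \otimes \rho_0$-a.e.\ pointwise,
\begin{align*}
 \bigl|\bar v_x\,\psi'(\tau|\bar\tau)\bigr| &\le C_1(t)\,\psi(\tau|\bar\tau),\\
 \bigl|[\bar v]\,K_r'(\eta|\bar\eta)\bigr| &\le C_2(t)\,K_r(\eta|\bar\eta),\\
 \bigl|[\bar v]\,K_a'(\eta|\bar\eta)\bigr| &\le C_3(t)\,K_a(\eta|\bar\eta).
\end{align*}

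Next, integrating these inequalities over $X$ and $X\times X$ respectively, and recalling the defining expressions \eqref{exp1}--\eqref{exp3} of the relative energies (together with $\rho_0 = \mathcal{L}\mres\tilde X$, so that $\dr$ and $dx$ agree on $\tilde X$ and $\bar v_x/\rho_0 = \bar v_x$ there), I obtain for each $t \in [a, t_1]$
\begin{align*}
 \int_X \Bigl|\tfrac{\bar v_x}{\rho_0}\,\psi'(\tau|\bar\tau)\Bigr|\,\dr &\le C_1(t)\,\E_w[\eta|\bar\eta](t),\\
 \iint_{X\times X}\bigl|[\bar v]\,K_r'(\eta|\bar\eta)\bigr|\,\drr &\le C_2(t)\,\E_r[\eta|\bar\eta](t),\\
 \iint_{X\times X}\bigl|[\bar v]\,K_a'(\eta|\bar\eta)\bigr|\,\drr &\le C_3(t)\,\E_a[\eta|\bar\eta](t).
\end{align*}
Integrating in time on $[a, t_1]$ and summing, then setting $C_0 := \sup_{t\in[a,t_1]}\bigl(C_1(t)+C_2(t)+C_3(t)\bigr)$, the right-hand side of \eqref{relineq} is bounded by $C_0\int_a^{t_1}\E[\eta|\bar\eta]\,dt$. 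Combined with \eqref{relineq}, this gives \eqref{e*}.

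The main obstacle is to ensure $C_0 < \infty$, i.e.\ that the Lipschitz norms $\Lip(\bar v(t,\cdot))$, $\Lip(\bar\eta(t,\cdot)^{-1})$ and $\Lip(\eta(t,\cdot)^{-1})$ are uniformly bounded in $t$ on the compact interval $[a, t_1]$. Finiteness at each $t$ follows from the strong-solution regularity $\ddot{\bar\eta},\dot{\bar\eta}_x,\bar\eta_{xx} \in L^\infty_{\rho_0}$ and from admissibility, but uniformity on $[a, t_1]$ is a further requirement that must be read off from the structure of $\hat{\mathcal{A}}(I\times X)$—for instance via a Gr\"onwall-type propagation of the bi-Lipschitz bound on $\eta$ through the time-continuity implicit in $\dot{\bar\eta}_x \in L^\infty_{\rho_0}$, or as a standing hypothesis on the weak flow $\eta$—and should be explicitly invoked in the proof.
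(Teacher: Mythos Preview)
Your proposal is correct and follows essentially the same approach as the paper: the paper's proof is the one-line statement ``By Theorem \ref{ineqthm} and estimates \eqref{e1}--\eqref{e3}, taking $C_0 = \max\{C_1,C_2,C_3\}>0$ in \eqref{e1}--\eqref{e3} will give the result.'' Your extra care about the time-dependence of the constants $C_i$ (through $\Lip(\bar v(t,\cdot))$, $\Lip(\eta(t,\cdot)^{-1})$, $\Lip(\bar\eta(t,\cdot)^{-1})$) and the need for a uniform-in-$t$ bound on $[a,t_1]$ is a legitimate point that the paper leaves implicit; the paper simply treats $C_1,C_2,C_3$ as fixed constants without commenting on uniformity.
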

\begin{proof}
 By Theorem \ref{ineqthm} and estimates \eqref{e1}-\eqref{e3}, taking $C_0 = \max\{C_1,C_2,C_3\}>0$ in \eqref{e1}-\eqref{e3} will give the result.
\end{proof}

\subsection{Applications of relative Hamiltonian inequality} \label{appl}

The relative Hamiltonian inequality provides a stability estimate thanks to the strict convexity via Theorems \ref{lowerbound}. We present two typical applications of the stability estimate. In Section \ref{appl}, we simply put $I = [a,b)=[0,T)$.
\subsubsection{Weak-strong uniqueness}

\begin{theorem}[Weak-strong uniqueness]
    Assume \eqref{hyp}. Suppose that $\eta(0,x) = \bar\eta(0,x)$, $\dot{\eta}(0,x) = \dot{\bar\eta}(0,x)$ for $\rho_0$-a.e. $x$. Then $\eta(t,x) = \bar\eta(t,x)$ for all $t\in I$ and $\rho_0$-a.e. $x$. 
\end{theorem}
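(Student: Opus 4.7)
The plan is to combine the relative Hamiltonian inequality \eqref{e*} with the nonnegativity lower bounds from Theorem \ref{lowerbound} and close a Gronwall argument.

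Because $\eta(0,\cdot)=\bar\eta(0,\cdot)$ $\rho_0$-a.e.\ and $m(0,\cdot)=\rho_0\dot\eta(0,\cdot)=\rho_0\dot{\bar\eta}(0,\cdot)=\bar m(0,\cdot)$ $\rho_0$-a.e., the integrands in \eqref{exp1}, \eqref{exp2}, \eqref{exp3}, and \eqref{expK} vanish identically, so $\H[\eta,m\,|\,\bar\eta,\bar m](0)=0$. Theorem \ref{lowerbound} moreover guarantees that each of the four relative energies is pointwise nonnegative, which yields
$$\E[\eta\,|\,\bar\eta](t)\;\le\;\H[\eta,m\,|\,\bar\eta,\bar m](t)\qquad\text{for every }t\in I.$$
Writing $F(t):=\H[\eta,m\,|\,\bar\eta,\bar m](t-)$ and substituting this bound into \eqref{e*} with $a=0$, I obtain
$$F(t)\;\le\;C_0\int_0^{t}\H[\eta,m\,|\,\bar\eta,\bar m](s)\,ds\;=\;C_0\int_0^{t}F(s)\,ds,$$
where the last equality uses that the BV function $\H[\eta,m\,|\,\bar\eta,\bar m]$ coincides with its left limit Lebesgue-almost everywhere. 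Gronwall's inequality then forces $F\equiv 0$ on $I$.

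It remains to promote $F\equiv 0$ to the pointwise identity $\H[\eta,m\,|\,\bar\eta,\bar m](t)=0$ for every $t$. From the proof of Theorem \ref{ineqthm} the map $t\mapsto\H[\eta,m\,|\,\bar\eta,\bar m](t)$ differs from $t\mapsto \H[\eta,m](t)$ by an absolutely continuous function, and $\H[\eta,m](t)$ is monotonically nonincreasing (see the remark after \eqref{dissip}), so all jumps of the relative Hamiltonian are downward; combined with nonnegativity this forces $\H[\eta,m\,|\,\bar\eta,\bar m](t)=0$ at every $t\in I$. The Corollary after Theorem \ref{lowerbound} then gives $\dot\eta(t,\cdot)=\dot{\bar\eta}(t,\cdot)$ $\rho_0$-a.e.\ and $\eta(t,\cdot)-\bar\eta(t,\cdot)=C(t)$ $\rho_0$-a.e.\ for some constant $C(t)$.

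The main obstacle I anticipate is the clean elimination of the translation $C(t)$. With $\eta(0,\cdot)=\bar\eta(0,\cdot)$ one has $C(0)=0$, and integrating the velocity identity $\dot\eta=\dot{\bar\eta}$ using absolute continuity in $t$ of $\eta(\cdot,x)$ for $\rho_0$-a.e.\ $x$ (available from $\dot\eta(t,\cdot)\in L^2_{\rho_0}$ at each $t$ together with the weak momentum equation \eqref{wksol}) yields $C(t)\equiv 0$. Apart from this measure-theoretic wrap-up, the argument is a textbook Gronwall closure powered by the strict convexity of the Hamiltonian encoded in Theorem \ref{lowerbound}.
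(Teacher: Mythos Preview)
Your proposal is correct and follows essentially the same approach as the paper: use $\H[\eta,m|\bar\eta,\bar m](0)=0$, bound $\E[\eta|\bar\eta]\le\H[\eta,m|\bar\eta,\bar m]$ by nonnegativity of the relative kinetic energy, feed this into \eqref{e*} and close via Gronwall, then invoke the Corollary after Theorem \ref{lowerbound} and kill the residual constant $C(t)$ from $C(0)=0$ together with $\dot\eta=\dot{\bar\eta}$. Your treatment of the BV jump structure and the pointwise-in-$t$ upgrade is slightly more explicit than the paper's, but the argument is the same.
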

\begin{proof}
 Assumptions on initial data imply that $\H[\eta,m | \bar\eta,\bar{m}](0)=0$. Since $\H[\eta,m|\bar\eta,\bar{m}]\ge \E[\eta|\bar\eta]$, estimate \eqref{e*} with $\H[\eta,m|\bar\eta,\bar{m}]$ in place of $\E[\eta|\bar\eta]$ in the integrand gives that $\H[\eta,m|\bar\eta,\bar{m}](t)=0$ for a.e. $t \in I$. This implies that for a.e. $t \in I$, $\E[\eta|\bar\eta](t)=0$ and that 
 $$\eta(t,x) = \bar\eta(t,x) + C(t) \quad \text{for $\rho_0$-a.e. $x$.}$$ Considering the regularity of $\eta$ with respect to time, the above equation holds for all $t\in I$ and $C(t)$ is absolutely continuous. By using $\K[\eta|\bar\eta](t)=0$ a.e., $C(t)\equiv 0$ and the assertion follows.
\end{proof}

\subsubsection{Uniform stability of a rarefaction}
We consider a motion $\bar\eta$, where $\dot{\bar{\eta}}(t,\cdot)$ is monotone increasing in $x$, thus undergoing rarefaction.
\begin{theorem}[Uniform stability] Assume \eqref{hyp}, $p \in (-1,1)\setminus\{0\}$, and $q\in (1,2)$. Assume also that for $t\in I$, $\bar{{v}}(t,\cdot)$ is monotone increasing. Let 
\begin{align*}
    \ell(t)&:=\min_{x>x'} \left\{ \frac{\bar{v}(x) - \bar{v}(x')}{x-x'}\right\}\ge 0,\\
    A(t) &:= \frac{\min\big\{\gamma+1, 2-p, 2-q\big\}\ell(t)}{\Lip(\eta(t,\cdot)) + \Lip(\bar\eta(t,\cdot))} \ge 0.
\end{align*}
Then we have the decay estimate 
\begin{align} \label{decay}
 \H[\eta,m|\bar\eta,\bar{m}](t-) + \int_{0}^t A(s)\E[\eta|\bar\eta](s) \:ds \le \H[\eta,m|\bar\eta,\bar{m}](0) 
\end{align}
\end{theorem}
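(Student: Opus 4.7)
The plan is to apply the relative Hamiltonian inequality of Theorem~\ref{ineqthm} and to bound each of the three integrands on its right-hand side pointwise by a negative multiple of the corresponding relative energy density, so that the sum is dominated by $-A(t)\,\E[\eta\,|\,\bar\eta](t)$. As a first step, I rewrite each ``primed relative'' quantity as a second-order Taylor remainder in the spirit of Proposition~\ref{pp}: with $\tau^\lambda := \lambda \tau + (1-\lambda)\bar\tau$ and $[\eta^\lambda] := \lambda [\eta] + (1-\lambda)[\bar\eta]$,
\begin{equation*}
 \psi'(\tau\,|\,\bar\tau) = (\tau-\bar\tau)^2 \int_0^1\!\int_0^s \psi'''(\tau^\lambda)\, d\lambda\, ds, \qquad K_\sharp'(\eta\,|\,\bar\eta) = [\eta-\bar\eta]^2 \int_0^1\!\int_0^s K_\sharp'''([\eta^\lambda])\, d\lambda\, ds
\end{equation*}
for $\sharp\in\{r,a\}$. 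A direct computation for $\psi(\tau)=\tau^{1-\gamma}$ and the power-law kernels gives the key ratios $\psi'''(s)/\psi''(s) = -(\gamma+1)/s$, $K_r'''(y)/K_r''(y) = (p-2)/y$, $K_a'''(y)/K_a''(y) = (q-2)/y$; under $\gamma>1$, $p\in(-1,1)\setminus\{0\}$, $q\in(1,2)$, both $K_r'''$ and $K_a'''$ are odd and strictly negative on $\{y>0\}$, and $\psi'''<0$.

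The rarefaction hypothesis then supplies both the right sign and a useful lower magnitude. On $\tilde X$, where $\rho_0\equiv 1$, monotonicity of $\bar v$ yields $\bar v_x\ge \ell(t)$ a.e.\ and $|[\bar v](x,x')|\ge \ell(t)|x-x'|$, with $[\bar v]$ sharing the common sign of $[\eta]$, $[\bar\eta]$, $[\eta^\lambda]$, and $x-x'$. Combined with the negativity of the third derivatives on same-sign arguments, each of the three integrands on the right-hand side of \eqref{relineq} is pointwise nonpositive. To quantify the rate, I combine these with the Lipschitz bounds $\tau^\lambda\le \Lip(\eta)+\Lip(\bar\eta)$ and $|[\eta^\lambda]|\le (\Lip(\eta)+\Lip(\bar\eta))|x-x'|$; the factor $1/\tau^\lambda$ (respectively $1/|[\eta^\lambda]|$) coming from the third-to-second derivative ratio is absorbed into $\bar v_x$ (respectively $|[\bar v]|$) and leaves
\begin{equation*}
 \frac{\bar v_x}{\rho_0}\, \psi'(\tau\,|\,\bar\tau) \;\le\; -\,\frac{(\gamma+1)\,\ell(t)}{\Lip(\eta)+\Lip(\bar\eta)}\,\psi(\tau\,|\,\bar\tau),
\end{equation*}
with analogous bounds producing the coefficients $2-p$ and $2-q$ in front of $K_r(\eta\,|\,\bar\eta)$ and $K_a(\eta\,|\,\bar\eta)$ respectively.

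The step I expect to be most delicate is the sign bookkeeping across the two branches $[\eta^\lambda]>0$ and $[\eta^\lambda]<0$ and verifying that $|[\bar v]|$ supplies exactly the single factor of $|x-x'|$ needed to cancel the one coming from $1/|[\eta^\lambda]|$; the case $p\in(-1,0)$ additionally requires confirming that $K_r'''$ keeps the same sign pattern as in $p\in(0,1)$. Once these three pointwise inequalities are in hand, integrating over $X$ or $X\times X$ respectively and taking the minimum of the three coefficients to define $A(t)$ dominates the right-hand side of \eqref{relineq} by $-\int_0^{t_1} A(s)\,\E[\eta\,|\,\bar\eta](s)\,ds$, which together with Theorem~\ref{ineqthm} yields \eqref{decay}.
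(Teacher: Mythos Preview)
Your proposal is correct and follows essentially the same route as the paper: write each relative-force integrand as a second-order Taylor remainder involving $\psi'''$, $K_r'''$, $K_a'''$, use monotonicity of $\bar v$ together with the Lipschitz bounds on $\eta,\bar\eta$ to control $\bar v_x/\tau^\lambda$ and $[\bar v]/[\eta^\lambda]$ from below by $\ell(t)/(\Lip(\eta)+\Lip(\bar\eta))$, and then compare pointwise to the relative energy densities before invoking Theorem~\ref{ineqthm}. Your organization via the ratios $\psi'''/\psi''=-(\gamma+1)/s$, $K_r'''/K_r''=(p-2)/y$, $K_a'''/K_a''=(q-2)/y$ is in fact a tidier way to land exactly on the constants $\gamma+1$, $2-p$, $2-q$ appearing in $A(t)$ than the paper's own write-up.
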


\begin{proof}
Recall from Proposition \ref{pp} that 
\begin{equation*}
 \bar{v}_x\psi'(\tau| \bar\tau) = -\gamma(\gamma^2-1) (\tau - \bar\tau)^2 \int_0^1\int_0^s (\tau^\lambda)^{-\gamma-1}~\frac{\bar{v}_x}{\tau^\lambda} \: d\lambda ds.
\end{equation*}
From monotonicity of $\bar{v}$, $\eta$, and $\bar\eta$ we have that for each $t\in I$
\begin{align*}
\frac{\bar{v}_x}{\lambda\eta_x + (1-\lambda)\bar\eta_x} \ge \frac{\ell(t)}{\Lip(\eta(t,\cdot)) + \Lip(\bar\eta(t,\cdot))}
\end{align*}
We also have that
\begin{equation*}
\begin{aligned}
[\bar v] K_r'(\eta|\bar\eta)  &= -(p-1)(p-2)[\eta-\bar\eta]^2\int_0^1 \int_0^s \Big|[\eta^\lambda]\Big|^{p-2} \frac{[\bar{v}]}{[\eta^\lambda]} \:d\lambda ds,\\
[\bar v] K_a'(\eta|\bar\eta)  &= +(q-1)(q-2)[\eta-\bar\eta]^2\int_0^1 \int_0^s \Big|[\eta^\lambda]\Big|^{q-2} \frac{[\bar{v}}{[\eta^\lambda]} \:d\lambda ds,
\end{aligned}
\end{equation*}
and 
$$\frac{[\bar{v}]}{[\lambda \eta + (1-\lambda)\bar\eta]} \ge \frac{\ell(t)}{\Lip(\eta(t,\cdot)) + \Lip(\bar\eta(t,\cdot))}.$$
Since $\gamma>1$, $p\in(-1,1)$, and $q\in(1,2)$, $\min\big\{\gamma(\gamma^2-1), (p-1)(p-2), -(q-1)(q-2)\big\}$ is a positive constant. By \eqref{relineq}, we have the decay estimate \eqref{decay}.
\end{proof}
\begin{remark}
The function $A(t)$ is bounded from above by a constant multiple of $\displaystyle\frac{\ell(t)}{\Lip(\bar\eta(t))} \le \frac{\ell(t)}{ \ell(0) + \int_0^t \ell(s)\:ds}$. Note that $A(t)$ may approach zero as $t \rightarrow \infty$ depending on the behavior of $\ell(t)$. Therefore, the decay estimate \eqref{decay} does not necessarily imply that $\E[\eta|\bar\eta]$ decays to $0$ as $t \rightarrow \infty$. 
\end{remark}

\section{Relative Hamiltonian computation for presureless system in one space dimension} \label{specialEP}
We present further consequences for a system without presure. In this system, $\E_{w}$ is dropped and for $\E_r$ and $\E_q$, we choose $p=1$ and $q=2$ respectively. For this system in one space dimension, we make the following observations:
\begin{enumerate}
 \item The relative Hamiltonian measures $L^2$ distance of states:
 $$ \H[\eta,m|\bar\eta,\bar{m}] = \frac{1}{2} \|v - \bar{v}\|_{L^2_{\rho_0}}^2 + \frac{1}{2}\|\eta - \bar\eta_{a(t)}\|_{L^2_{\rho_0}}^2,$$
 where $\bar\eta_{a(t)}(t,\cdot): = \bar\eta(t,\cdot) - a(t)$ for each $t$, where the shift factor $a(t)$ is so that the centers of mass of $\eta(t,\cdot)$ and $\bar\eta(t,\cdot)$ coincide. In particular, we will observe that the relative energy contribution from the repulsive kernel $K_r(y) = -\frac{|y|}{4}$ simply vanishes.
 \item Under the assumption that $\eta$ and $\bar\eta$ are both strictly monotone, i.e., no  $\delta$-shock appears, we will observe that the $L^2$ distance is decreasing in time, i.e. we have uniform stability
 $$\int_X \frac{|v-\bar v|^2}{2} + \frac{|\eta - \bar\eta_{a(t)}|^2}{2} \:\dr\bigg|_{t} \le \int_X \frac{|v-\bar v|^2}{2} + \frac{|\eta - \bar\eta_{a(0)}|^2}{2} \:\bigg|_{t=0} \quad \text{for a.e. $t$.}$$
 Even for $\eta$ that is merely monotone, we have at most linear growth of the squared $L^2$ distance. These are much stronger stability estimates than those based on Gronwall type inequalities, the latter being with the constant that may grow exponentially in time.
\end{enumerate}

\subsection{Dissipative weak solution and strong solution}
 \label{sec:FP}
 Under the assumptions of the formulation (B) in Section \ref{sec:(B)}, the interaction energy is given by 
\begin{align*}
 &\E = \E_r + \E_a = \iint_{X\times X} K_r([\eta]) + K_a([\eta])\:\drr, \\
 &K_r(y) = - \frac{|y|}{4}, \quad K_a(y) = \frac{y^2}{4},\end{align*}
and the Hamiltonian $\H[\eta,m] = \K[m] + \E[\eta]$, where $m= \rho_0v$, $v=\dot{\eta}$.

The Hamiltonian is finite due to \eqref{assumption}. We also define the {\it virtual work} functional $\mathbf{f}[\eta]$ 
$$ \phi \mapsto -\frac{d}{ds} \E[\eta + s\phi] \Big|_{s=0}$$
on $\phi\in L^2_{\rho_0}(X)$. Continuity of $\mathbf{f}[\eta]$ is justified in Proposition \ref{vw2}. 
\begin{proposition} \label{vw2}
 Suppose $\E$ is given by the formula \eqref{energynormal}. Then the functional $\mathbf{f}[\eta] : \phi \mapsto -\frac{d}{ds} \E[\eta + s\phi] \Big|_{s=0}$ is continuous on
  $\phi \in L^2_{\rho_0}(X)$
 and has the representation
 \begin{equation} \label{f}
 \begin{aligned}
  \mathbf{f}[\eta](\phi)=& \iint_{X\times X} \Big(\frac{\sgn([\eta])}{4} - \frac{[\eta]}{2}\Big)[\phi]\: d\rho_0(x)d\rho_0(x').
 \end{aligned}
 \end{equation}
\end{proposition}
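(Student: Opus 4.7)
The plan is to compute the Gateaux derivative $-\frac{d}{ds}\E[\eta+s\phi]\big|_{s=0}$ by differentiating pointwise inside the double integral, justifying the interchange via the Lebesgue dominated convergence theorem, and then reading off continuity of the resulting bilinear pairing via Cauchy--Schwarz. Writing $\E=\E_r+\E_a$, pointwise differentiation at $s=0$ of the integrands yields $\tfrac{1}{2}[\eta][\phi]$ for the quadratic piece $\E_a$ (everywhere) and $-\tfrac{1}{4}\sgn([\eta])[\phi]$ for $\E_r$ (wherever $[\eta]\ne 0$); negating their sum gives exactly the integrand in \eqref{f}.

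For dominated convergence, fix $|s|\le s_0$. The difference quotient of the quadratic integrand equals $2[\eta][\phi]+s[\phi]^2$ and is dominated by $2|[\eta][\phi]|+s_0[\phi]^2$; that of the absolute-value integrand is dominated by $|[\phi]|$ via the reverse triangle inequality. Since $\rho_0(X)=1$ by \eqref{assumption} and $\eta,\phi\in L^2_{\rho_0}(X)$, we have $[\eta],[\phi]\in L^2_{\rho_0\times\rho_0}$ and $[\phi]\in L^1_{\rho_0\times\rho_0}$, so both dominants are $\rho_0\times\rho_0$-integrable. Dominated convergence then delivers \eqref{f}.

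The main obstacle is the set $\Sigma:=\{(x,x'):[\eta](x,x')=0\}$, on which $s\mapsto|s[\phi]|$ is non-differentiable at $s=0$. With the convention $\sgn(0):=0$, the right-hand side of \eqref{f} assigns no contribution from $\Sigma$, so the representation is unambiguous. By monotonicity of $\eta\in\textsf{S}(X)$, $\Sigma$ is the union of the diagonal, which is $\rho_0\times\rho_0$-null since $\rho_0\in L^1(X)$, and the horizontal plateaus of $\eta$, which correspond precisely to the vacuum configurations that the formulation of Section~\ref{sec:(B)} is designed to admit. On the plateau component the one-sided derivatives of $|s[\phi]|$ at $s=0$ are $\pm|[\phi]|$, so the classical Gateaux derivative exists if and only if $\iint_\Sigma|[\phi]|\,\drr=0$; otherwise \eqref{f} should be read as the symmetric derivative, equivalently a minimal-norm selection from the subdifferential of the convex functional $\E_r$, which is consistent with the differential-inclusion framework adopted in Section~\ref{sec:(B)} following \cite{2013Brenier}. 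Continuity of $\mathbf{f}[\eta]:L^2_{\rho_0}(X)\to\mathbb{R}$ is then immediate from Cauchy--Schwarz together with $\rho_0(X)=1$, yielding $|\mathbf{f}[\eta](\phi)|\le \tfrac{1}{2}\|[\phi]\|_{L^1_{\rho_0\times\rho_0}}+\tfrac{1}{2}\|[\eta]\|_{L^2_{\rho_0\times\rho_0}}\|[\phi]\|_{L^2_{\rho_0\times\rho_0}}\le C\bigl(1+\|\eta\|_{L^2_{\rho_0}}\bigr)\|\phi\|_{L^2_{\rho_0}}$.
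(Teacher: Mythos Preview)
Your proposal is correct and follows essentially the same route as the paper: dominated convergence with the reverse triangle inequality for the $|\cdot|$ piece and the expansion $2[\eta][\phi]+s[\phi]^2$ for the quadratic piece, followed by continuity via Cauchy--Schwarz and the finiteness of $\rho_0(X)$. You are in fact more careful than the paper on one point: the paper simply writes ``taking the limit $s\to 0$'' without commenting on the plateau set $\Sigma=\{[\eta]=0\}$, whereas you correctly isolate this set, note that the diagonal is $\rho_0\times\rho_0$-null, and explain that on off-diagonal plateaus the two-sided Gateaux derivative need not exist and \eqref{f} should be read as the symmetric derivative/subdifferential selection consistent with the differential-inclusion framework of Section~\ref{sec:(B)}.
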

\begin{proof} By triangle inequality,
\begin{align*}
  &\iint_{X\times X}\left| \frac{1}{4s}\Big| (\eta + s\phi)(x) - (\eta + s\phi)(x')\Big|- \frac{1}{4s}\Big|\eta(x) - \eta(x')\Big| \right|\: d\rho_0(x)d\rho_0(x') \\
  &\le  \frac{1}{4}\iint_{X\times X}\Big|\phi(x) - \phi(x')\Big| \: \drr\\
  &\le \frac{1}{2} \int_X |\phi(x)|\sqrt{\rho_0(x)} \sqrt{\rho_0(x)} \:dx < \infty.
\end{align*}
Also, for $|s| \le 1$
\begin{align*}
 &\iint_{X\times X} \left|\frac{1}{4s}\Big( (\eta + s\phi)(x) - (\eta + s\phi)(x')\Big)^2- \frac{1}{4s}\Big(\eta(x) - \eta(x')\Big)^2\right| \: d\rho_0(x)d\rho_0(x') \\
  &= \frac{1}{4} \iint_{X\times X}\Big| (2\eta + s\phi)(x) - (2\eta + s\phi)(x')\Big|\Big|\phi(x)-\phi(x')\Big|\: d\rho_0(x)d\rho_0(x')\\
  &\le \iint_{X\times X} \frac{1}{2}\Big| \eta(x) - \eta(x')\Big|\Big|\phi(x)-\phi(x')\Big| + \frac{1}{4} \Big(\phi(x)-\phi(x')\Big)^2  \: d\rho_0(x)d\rho_0(x') \\
  & < \infty.
\end{align*}
Taking the limit as $s \rightarrow 0$, we obtain the result. Continuity can be readily inferred from the given representation.
\end{proof}

To give a definition of weak Hamiltonian flow, we proceed as in \cite{2013Brenier}. For an orbit $\eta \in \mathcal{A}(I\times X)$, the infinitesimal differential at $\eta(t,\cdot)$ must be restricted so that $\eta(t,\cdot)$ stays in the cone $\textsf{S}(X)$ of increasing maps. \cite{2013Brenier} investigated the normal and tangent cones at a given state $\eta$ to define the Lagrangian solution \cite[definition 3.4]{2013Brenier}. Explicit use of the normal and the tangent cone in a differential inclusion gives the precise description of the weak motion.

In this paper, the weak flow is defined  in a rather loose manner, that is not as precise as that in \cite{2013Brenier}. Nevertheless, following \cite[equation (1.33) and Lemma 2.4]{2013Brenier}, we define, for an increasing map $\eta \in L^2_{\rho_0}(X)$, the set:
\begin{align} \label{vacuumset}
    \Omega_{\eta}&:= \left\{x\in X ~|~ \eta \text{ is constant in a neighborhood of $x$}\right\},
\end{align}
which is a countable union of disjoint open intervals. Subsequently, we define the cone of admissible perturbations as
\begin{align*}
    T_{\eta}&:= \left\{w\in L^2_{\rho_0}(X) ~|~ w \text{ is increasing in every $(\alpha,\beta) \subset \Omega_{\eta}$}\right\}.
\end{align*}
Having defined the sets $\Omega_{\eta}$ and $T_{\eta}$, a map $\eta$ defined on $I\times X$ satisfying \eqref{assumption}, with $m=\rho_0 v$, $v= \dot{\eta}$,  is said to be a dissipative weak solution if 
\begin{align} \label{wksol2}
         -\int_I\int_{X}^{}{m}\dot{\phi}(t,x) \: dxdt &= \int_I \mathbf{f}{[\eta(t,\cdot)]}(\phi(t,\cdot))
\end{align}
holds for all $\phi \in C_c^{0,1}((a,b) \times X)$ satisfying $\dot\phi(t,\cdot) \in T_{\eta(t,\cdot)}$ for every $t\in I$, and it satisfies
    \begin{equation} \label{dissip2}
        \begin{aligned}
        &-\int_I \H[\eta, {m}]\dot{\theta}(t)dt \leq 0, \quad &&\text{for any non-negative $\theta\in C^{0,1}_c((a,b))$}, \\
        &\H[\eta,m](a+) \le \H[\eta,m](a).
        \end{aligned}
    \end{equation}
A dissipative weak solution is said to be a \textit{strong solution}, denoted by $\bar{\eta}$, if it additionally satisfies the following:
\begin{enumerate} [label=(\roman*)]
 \item For each $t\in I$, $\bar\eta(t,\cdot)$ is strictly increasing and $\ddot{\bar\eta}(t,\cdot)$ is in $L^\infty(X)$.
 \item 
 \begin{equation} \label{strsol2}
  \int_X \dot{\bar{{m}}} \phi(x) \: dx = \mathbf{f}[\bar\eta](\phi) \quad \text{for all $\phi\in C_c(X)$, and for a.e. $t\in I$.}
 \end{equation}
 \item The two inequalities in $\eqref{dissip2}$ are satisfied as equalities.
\end{enumerate}
Throughout the Section \ref{specialEP}, we fix $\H[\eta,m] = \K[m] + \E[\eta]$, where $\E[\eta]$ is given as in \eqref{energynormal}, $I=[a,b)$, and we assume
\begin{equation} \label{hyp2}
    \begin{aligned}
        &\text{$\eta$ is a dissipative weak  flow for $\H$ on $I$,}\\
        &\text{$\bar\eta$ is a strong flow for $\H$ on $I$,}\\
        &\text{${v}=\dot\eta$, $\bar{v}=\dot{\bar{\eta}}$, ${m}= \rho_0 {v}$, and $\bar{m}= \rho_0 \bar{v}$.} 
    \end{aligned}
\end{equation}

\subsection{Convexity and relative Hamiltonian as squared $L^2$ distance}
The relative energies  are defined similarly to \eqref{exp1}-\eqref{expK}. A simple calculation yields
\begin{align*}
 \E_a[\eta|\bar\eta] &= \frac{1}{4}\iint [\eta-\bar\eta]^2\:\drr,\\
 \E_r[\eta|\bar\eta] &= \frac{1}{4}\iint -|[\eta]| + |[\bar\eta]| +\sgn([\bar\eta])[\eta-\bar\eta] \:\drr\\
  &= \frac{1}{4}\iint [\eta]\Big(\sgn([\bar\eta]) - \sgn([\eta])\Big)\:\drr = 0 \\
  \K[m|\bar m]  &= \int_X \frac{\rho_0(v-\bar v)^2}{2} \: dx.
\end{align*}
Notably, the relative energy of the repulsive part vanishes. This was demonstrated in Theorem \ref{lowerbound}. 

Under the assumptions of \eqref{assumption} in Section \ref{sec:(B)}, the  center of mass is defined as
$$ y_{cm}[\eta](t) := \frac{\int_X \eta(t,x) \:\dr}{\int_X \dr}.$$
 Let $\bar\eta_{a(t)}(t,\cdot): = \bar\eta(t,\cdot) - a(t)$, where $a(t) = y_{cm}[\bar\eta(t,\cdot)] - y_{cm}[\eta(t,\cdot)]$. Then the centers of mass $y_{cm}[\bar\eta_{a(t)}]$ and $y_{cm}[\eta]$ coincide, i.e.,
$$ \int_X \big(\eta - \bar\eta_{a(t)}\big) \: \dr =0.$$
Consequently, the relative energy for the attractive kernel can be expressed as follows:
\begin{align*}
 &\E[\eta|\bar\eta] = \frac{1}{4}\iint [\eta-\bar\eta]^2\:\drr = \frac{1}{4}\iint [\eta-\bar\eta_{a(t)}]^2\:\drr\\
 &= \frac{1}{4} \iint_{X\times X} \big(\eta(x)-\bar\eta_{a(t)}(x)\big)^2 + \big(\eta(x')-\bar\eta_{a(t)}(x')\big)^2 \\
 & \quad \quad \quad - 2\big(\eta(x)-\bar\eta_{a(t)}(x)\big)\big(\eta(x')-\bar\eta_{a(t)}(x')\big)\:\drr\\
 &= \frac{1}{2}\int_X \big(\eta(x)-\bar\eta_{a(t)}\big)^2 \:\dr.
\end{align*}
In conclusion, the relative Hamiltonian $\H[\eta,m|\bar\eta,\bar{m}] = \K[m|\bar{m}] + \E[\eta|\bar\eta]$ measures $L^2$ distance of weak and strong states after the latter being shifted to match the centers of mass. More specifically, we have an exact equality 
$$ \H[\eta,m|\bar\eta,\bar{m}] = \frac{1}{2} \|v - \bar{v}\|_{L^2_{\rho_0}}^2 + \frac{1}{2}\|\eta - \bar\eta_{a(t)}\|_{L^2_{\rho_0}}^2.$$

\subsection{Relative Hamiltonian inequality}
Now, we justify the relative Hamiltonian inequality. In the rest of Section \ref{specialEP}, we put $I=[0,T).$
\begin{theorem} \label{ineqthm2}
    Assume \eqref{hyp2}. Then the map $t \mapsto \H[\eta, m|\bar{\eta}, \bar{m}](t)$ is a function of Bounded Variation on $I$, and for eath $t_1\in I$ it satisfies the following:
 \begin{enumerate} [label=(\roman*)]
  \item If $\eta(t,\cdot)$ is strictly monotone for each $0\le t\le t_1$, then
    \begin{equation} \label{relineq2}
            \H[\eta, m|\bar{\eta}, \bar{m}](t_1-) - \H[\eta, m| \bar{\eta}, \bar{m}](0) \le 0.
    \end{equation}
  \item Otherwise, we have 
    \begin{equation} \label{relineq3}
    \begin{aligned}
            &\H[\eta, m|\bar{\eta}, \bar{m}](t_1-) - \H[\eta, m| \bar{\eta}, \bar{m}](0) \\
            & \leq  \frac{1}{2}\int_0^{t_1}\int_{\{(x,x')~|~\eta(t,x)=\eta(t,x'), ~x>x'\}} \big(\bar{v}(t, x) - \bar{v}(t, x')\big)\: \drr dt.
    \end{aligned}
    \end{equation}
  \end{enumerate}
\end{theorem}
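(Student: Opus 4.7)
The plan is to follow the scheme of Theorem \ref{ineqthm}, specializing to the pressureless setting of formulation (B). Two structural simplifications are responsible for the sharpness of both \eqref{relineq2} and \eqref{relineq3}: because $K_a(y) = y^2/4$ is quadratic, the pointwise remainder $K_a'([\eta]) - K_a'([\bar\eta]) - K_a''([\bar\eta])[\eta-\bar\eta]$ vanishes identically, so the attractive part contributes zero to the relative work rate; and because $\bar\eta$ is strictly monotone, the distributional second derivative $K_r'' = -\tfrac{1}{2}\delta_0$ composed with $[\bar\eta]$ is supported on the $\rho_0 \times \rho_0$-null diagonal $\{x = x'\}$ and also drops out. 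Consequently only a purely combinatorial piece, driven by sign mismatches between $\sgn([\eta])$ and $\sgn([\bar\eta])$, survives.

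I would start from the dissipation inequality \eqref{dissip2} tested against a non-negative $\theta \in C_c^{0,1}(I)$, and expand
\[
 \H[\eta,m|\bar\eta,\bar m] = \H[\eta,m] - \H[\bar\eta,\bar m] - \big\langle \bar v, m - \bar m\big\rangle - \Big\langle \tfrac{\delta \bar\E}{\delta \eta}, \eta - \bar\eta\Big\rangle,
\]
using Proposition \ref{vw2} to interpret $\tfrac{\delta \bar\E}{\delta \eta}$. Imitating the passage from \eqref{firsteqn} to \eqref{secondeqn}, I would feed a suitable approximation of $(\eta - \bar\eta)\theta$ into the strong equation \eqref{strsol2} for $\bar\eta$, and (after checking admissibility with respect to the tangent cone $T_{\eta}$) an approximation of $\bar v \theta$ into the weak formulation \eqref{wksol2} for $\eta$. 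The "time-derivative of linear part" terms cancel in pairs exactly as in the proof of Theorem \ref{ineqthm}, leaving the inequality
\[
 \tfrac{d}{dt}\H[\eta,m|\bar\eta,\bar m] \le -\big\langle \bar v,\, \mathbf{f}[\eta|\bar\eta]\big\rangle, \qquad \mathbf{f}[\eta|\bar\eta] = -\tfrac{\delta\E}{\delta\eta} + \tfrac{\delta\bar\E}{\delta\eta} + \tfrac{\delta^2\bar\E}{\delta\eta^2}(\eta - \bar\eta).
\]

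Using the explicit representation of $\mathbf{f}$ from Proposition \ref{vw2}, the attractive contribution vanishes identically by the algebraic identity above, and the repulsive contribution reduces to
\[
 \big\langle \bar v, \mathbf{f}_r[\eta|\bar\eta]\big\rangle = \tfrac{1}{4}\iint_{X \times X}\big(\sgn([\eta]) - \sgn([\bar\eta])\big)[\bar v]\,\drr.
\]
The integrand is invariant under $x \leftrightarrow x'$, so the integral equals twice its restriction to $\{x > x'\}$. On that half, $\sgn([\bar\eta]) = 1$ by strict monotonicity of $\bar\eta$, while $\sgn([\eta]) \in \{0,1\}$ by monotonicity of $\eta$, so the integrand vanishes unless $\eta(x) = \eta(x')$. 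This yields
\[
 \big\langle \bar v, \mathbf{f}_r[\eta|\bar\eta]\big\rangle = -\tfrac{1}{2}\iint_{\{x > x',\,\eta(t,x)=\eta(t,x')\}}\big(\bar v(t,x) - \bar v(t,x')\big)\,\drr.
\]
Integrating from $0$ to $t_1$ and handling the bounded-variation structure and the jump at $t_1$ exactly as through \eqref{jump} in Theorem \ref{ineqthm} produces \eqref{relineq3}; case (i) is immediate because the exceptional set is then $\rho_0 \times \rho_0$-null.

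The main obstacle is technical rather than conceptual: justifying the test-function constructions inside \eqref{wksol2}, given the cone constraint $\dot\phi(t,\cdot) \in T_{\eta(t,\cdot)}$. Unlike Section \ref{sec:EP}, where $\eta$ is bi-Lipschitz, here $\eta$ may possess vacuum intervals and $\delta$-shocks, so naive approximants of $\bar v$ or of $\eta - \bar\eta$ need not respect $T_\eta$. One must either build sequences that do, or invoke the Brenier--Grenier differential-inclusion formulation and check that perturbations which violate the cone only push $\tfrac{d}{dt}\H[\eta,m|\bar\eta,\bar m]$ upward, in a manner that is absorbed by the positive sign of the right-hand side of \eqref{relineq3}. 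Once this admissibility is settled, the rest of the proof is a straightforward adaptation of Theorem \ref{ineqthm}.
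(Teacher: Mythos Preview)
Your proposal is correct and follows essentially the same route as the paper: expand the relative Hamiltonian, use the strong equation for $\bar\eta$ and the weak formulation for $\eta$ with test function $\theta\bar v$, and reduce the surviving remainder to the sign-mismatch integral $\tfrac{1}{4}\iint(\sgn[\bar\eta]-\sgn[\eta])[\bar v]\,\drr$, which localises to $\{\eta(x)=\eta(x'),\,x>x'\}$ and yields \eqref{relineq3}. Your identification of the two simplifications (quadratic $K_a$, distributional $K_r''$ supported on the null diagonal) is exactly what drives the argument.

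One remark: the technical obstacle you flag concerning the cone constraint $\dot\phi(t,\cdot)\in T_{\eta(t,\cdot)}$ is genuine, but the paper's proof simply uses $\theta\bar v$ as a test function in \eqref{wksol2} without discussing admissibility. In case (i) there is no issue since $\Omega_\eta=\emptyset$ and $T_\eta=L^2_{\rho_0}$; in case (ii) the paper is silent on this point (consistent with its acknowledged ``rather loose'' definition of weak flow). So your caution here is more than the paper itself exercises, and you should not expect to find a resolution of it in the proof you are reconstructing.
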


\begin{proof}
Similarly to \eqref{firsteqn},
\begin{align*}
        -\int_I &\H[\eta, {m} | \bar{\eta}, {\bar{m}}] \dot\theta(t)\:dt \\
        & \le \int_I\iint_{X \times X} \Big(-\frac{\sgn([\bar\eta])}{4} + \frac{[\bar\eta]}{2}\Big)[\eta-\bar{\eta}]\dot\theta \: \drr dt\\
        &  + \int_I\int_X \bar{v}({m}-\bar{{m}})\dot\theta dxdt.
\end{align*}
Since $\bar\eta$ is a strong solution, 
\begin{align*}
    \Big(-\frac{\sgn([\bar\eta])}{4} + \frac{[\bar\eta]}{2}\Big)[\eta-\bar{\eta}]\dot\theta&=\left\{-\frac{\sgn(x-x')\dot{\theta}}{4} + \frac{\big(\theta[\bar\eta]\big)^\bdot-\theta[\bar v]}{2}\right\}[\eta - \bar\eta], \\
    ({m}-\bar{{m}})\bar{v}\dot\theta&= 
    \Big(\big(\theta \bar{v}\big)^\bdot - \theta \dot{\bar{v}}\Big)({m}-\bar{{m}}).
\end{align*}
Similarly to \eqref{secondeqn},
\begin{align*}
 &\int_I\int_X -\theta \dot{\bar{v}}(m-\bar m) \:dx dt \\
 +&\int_I\iint_{X\times X} \Big(-\frac{\theta \sgn(x-x')}{4} + \frac{\theta[\bar\eta]}{2} \Big)^\bdot [\eta-\bar\eta] \:\drr dt = 0.
\end{align*}
Remaining terms are
\begin{align*}
 &\int_I\int_X \big(\theta \bar{v}\big)^\bdot ({m}-\bar{{m}}) \:dxdt - \frac{1}{2}\int_I\iint_{X\times X}\theta [\bar v][\eta-\bar\eta] \:\drr dt\\
 &= \int_I \theta \int_{X\times X} [\bar v] \Big( K'([\eta]) - K'([\bar\eta]) - \frac{[\eta-\bar\eta]}{2}\Big) \: \drr dt\\
 &= \int_I \theta \int_{X\times X} [\bar v] \Big( -\frac{\sgn([\eta])}{4} + \frac{ \sgn([\bar\eta])}{4}\Big)\: \drr dt\\
  &= \frac{1}{2}\int_I \theta \int_{\{\eta(x)=\eta(x'), x>x'\}} \big(\bar{v}(x) - \bar{v}(x')\big)\: \drr dt.
\end{align*}
Simlarly as in the proof of Theorem \ref{ineqthm}, we obtain the result.
\end{proof}

\subsection{Applications of relative Hamiltonian inequality}
\subsubsection{Uniform stability before delta shock formation}
\begin{theorem} \label{thm:L2stab1}
 Assume \eqref{hyp2}, and let $\bar\eta_{a(t)}$ be the translation $\bar\eta(t,\cdot) -a(t)$, $a(t) = y_{cm}[\bar\eta(t,\cdot)] - y_{cm}[\eta(t,\cdot)]$ for $t\in [0,T)$. Suppose further that for each $t\in [0,T)$ $\eta(t,\cdot)$ is strictly monotone. Then for $t\in[0,T)$
 $$\int_X \frac{|v-\bar v|^2}{2} + \frac{|\eta - \bar\eta_{a(t)}|^2}{2} \:\dr\bigg|_{t-} \le \int_X \frac{|v-\bar v|^2}{2} + \frac{|\eta - \bar\eta_{a(0)}|^2}{2} \:\bigg|_{t=0}.$$
\end{theorem}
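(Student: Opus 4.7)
The plan is to invoke two ingredients already established in Section 5.2 and then conclude by direct substitution. The first ingredient is the exact pointwise-in-time identification
\begin{equation*}
\H[\eta,m\,|\,\bar\eta,\bar m](t) = \tfrac{1}{2}\|v-\bar v\|_{L^2_{\rho_0}}^2 + \tfrac{1}{2}\|\eta-\bar\eta_{a(t)}\|_{L^2_{\rho_0}}^2,
\end{equation*}
and the second is the dissipation inequality $\H[\eta,m\,|\,\bar\eta,\bar m](t_1-) \le \H[\eta,m\,|\,\bar\eta,\bar m](0)$ supplied by Theorem \ref{ineqthm2}(i), whose hypothesis (strict monotonicity of $\eta(t,\cdot)$ on $[0,t_1]$) is furnished by the assumption of the theorem for every $t_1 \in [0,T)$.

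For the identification, I would record the three separate contributions. The relative kinetic energy satisfies $\K[m\,|\,\bar m] = \frac{1}{2}\|v-\bar v\|_{L^2_{\rho_0}}^2$ directly from \eqref{expK}. The repulsive part of the relative interaction energy vanishes, $\E_r[\eta\,|\,\bar\eta] \equiv 0$, from the calculation
\begin{equation*}
\E_r[\eta\,|\,\bar\eta] = \tfrac{1}{4}\iint [\eta]\big(\sgn([\bar\eta])-\sgn([\eta])\big)\:\drr = 0,
\end{equation*}
which uses only that $\eta(t,\cdot)$ and $\bar\eta(t,\cdot)$ are increasing: wherever $[\eta]\neq 0$, one has $\sgn([\eta])=\sgn([\bar\eta])=\sgn(x-x')$, while on the complementary set the prefactor $[\eta]$ is zero. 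For the attractive part, expanding $\iint[\eta-\bar\eta_{a(t)}]^2\,\drr$ and using $\int(\eta-\bar\eta_{a(t)})\,\dr = 0$ (by the definition of $a(t)$) together with $\int\dr = 1$ yields $\E_a[\eta\,|\,\bar\eta]=\frac{1}{2}\|\eta-\bar\eta_{a(t)}\|_{L^2_{\rho_0}}^2$; the replacement of $\bar\eta$ by $\bar\eta_{a(t)}$ inside $\E_a$ is harmless because the double integral depends on $\bar\eta$ only through the differences $[\bar\eta]$, which are invariant under $x$-independent shifts.

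Combining the three identifications, both sides of the target inequality coincide with the relative Hamiltonian evaluated at $t-$ and at $0$, respectively. Applying Theorem \ref{ineqthm2}(i) then gives the claim. There is no substantive obstacle: the structural work has been done in Section 5.2 and in Theorem \ref{ineqthm2}. The only care point is the time dependence of the shift $a(t)$, but this never interferes with the dissipation argument because neither $\E[\eta\,|\,\bar\eta]$ nor $\K[m\,|\,\bar m]$ feels a spatially constant shift of $\bar\eta(t,\cdot)$.
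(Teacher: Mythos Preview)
Your proposal is correct and follows the same route as the paper: the paper's proof is the single line ``The estimate \eqref{relineq2} furnishes the proof,'' which relies on exactly the two ingredients you spell out---the identification $\H[\eta,m\,|\,\bar\eta,\bar m]=\tfrac12\|v-\bar v\|_{L^2_{\rho_0}}^2+\tfrac12\|\eta-\bar\eta_{a(t)}\|_{L^2_{\rho_0}}^2$ from Section~5.2 and the dissipation inequality of Theorem~\ref{ineqthm2}(i). Your write-up merely unpacks the Section~5.2 identification (vanishing of $\E_r[\eta\,|\,\bar\eta]$, center-of-mass computation for $\E_a$) that the paper takes as already established; there is no divergence in strategy.
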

\begin{proof}
    The estimate \eqref{relineq2} furnishes the proof.
\end{proof}
Theorem \ref{thm:L2stab1} tells that, as long as one solution is strong and both of $\eta$ and $\bar \eta$ are strictly monotone (i.e., before the formation of a $\delta$-shock), the squared $L^2$ distance of two solutions does not increase over time. This $L^2$ stability is much stronger than what is expected from the typical practice of relative energy estimates. The next theorem provides a stability result even after the formation of a $\delta$-shock.

\subsubsection{Linear growth of the squared $L^2$ distance after delta shock formation}
\begin{theorem} \label{thm:L2stab2}
Assume \eqref{hyp2}, and let $\bar\eta_{a(t)}$ be the translation $\bar\eta(t,\cdot) -a(t)$, $a(t) = y_{cm}[\bar\eta(t,\cdot)] - y_{cm}[\eta(t,\cdot)]$ for $t\in [0,T)$. Then for $t\in[0,T)$ 
 $$\int_X \frac{|v-\bar v|^2}{2} + \frac{|\eta - \bar\eta_{a(t)}|^2}{2} \:\dr\bigg|_{t-} \le \int_X \frac{|v-\bar v|^2}{2} + \frac{|\eta - \bar\eta_{c(0)}|^2}{2} \:\bigg|_{t=0} + C_0t.$$ 
\end{theorem}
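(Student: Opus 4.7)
The plan is to combine the relative Hamiltonian inequality from Theorem \ref{ineqthm2}(ii), which already accommodates $\delta$-shock formation in the weak solution, with a uniform kinetic-energy bound for the strong solution, and then invoke the $L^2$-identification $\H[\eta,m\,|\,\bar\eta,\bar m] = \tfrac{1}{2}\|v-\bar v\|_{L^2_{\rho_0}}^2 + \tfrac{1}{2}\|\eta - \bar\eta_{a(t)}\|_{L^2_{\rho_0}}^2$ derived in Section 5.2. It therefore suffices to show that the double integral on the right of \eqref{relineq3} is bounded in $t$ by a time-independent constant $C_0$.

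The first step is a crude pointwise domination. Since $\bar v(t,x)-\bar v(t,x')$ may have either sign on the $\delta$-shock set $\{\eta(t,x)=\eta(t,x'),\,x>x'\}$, I estimate
$$\tfrac{1}{2}\iint_{\{\eta(t,x)=\eta(t,x'),\,x>x'\}}\bigl(\bar v(t,x)-\bar v(t,x')\bigr)\,\drr \le \tfrac{1}{2}\iint_{X\times X}\bigl(|\bar v(t,x)|+|\bar v(t,x')|\bigr)\,\drr = \int_X |\bar v(t,\cdot)|\,\dr,$$
using the normalization $\int_X \dr = 1$ assumed in Section \ref{sec:(B)}. Cauchy--Schwarz then yields $R(t) \le \|\bar v(t,\cdot)\|_{L^2_{\rho_0}} = \sqrt{2\K[\bar m](t)}$, where $R(t)$ denotes the integrand in $t$ appearing on the right of \eqref{relineq3}.

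The second step is to bound $\K[\bar m](t)$ uniformly in $t$. Since $\bar\eta$ is a strong solution, the dissipation inequalities in \eqref{dissip2} hold as equalities, and hence the Hamiltonian is conserved: $\K[\bar m](t)+\E[\bar\eta](t)=\K[\bar m](0)+\E[\bar\eta](0)$. The integrand of $\E[\bar\eta]$ is $K_r(y)+K_a(y)=\tfrac{y^2-|y|}{4}$, whose global minimum over $y\in\mathbb{R}$ is $-\tfrac{1}{16}$, attained at $|y|=\tfrac{1}{2}$; combined with $\iint\drr = 1$ this gives $\E[\bar\eta](t)\ge -\tfrac{1}{16}$. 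Consequently, $\K[\bar m](t)\le \K[\bar m](0)+\E[\bar\eta](0)+\tfrac{1}{16}=:\tfrac{1}{2}C_0^2$, so $R(t)\le C_0$ uniformly in $t$. Integrating \eqref{relineq3} from $0$ to $t$ and applying the $L^2$-identification above gives the claimed linear growth with constant $C_0$.

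The only subtle point is that the time-dependent shift $a(t)=y_{cm}[\bar\eta(t)]-y_{cm}[\eta(t)]$ and the $L^2$-identification of $\H[\eta,m\,|\,\bar\eta,\bar m]$ must remain meaningful after the formation of a $\delta$-shock. This is already ensured by the admissibility conditions \eqref{assumption}, which require $\eta(t,\cdot)$ and $\bar\eta(t,\cdot)$ to lie in $L^2_{\rho_0}(X)$ for every $t\in I$, so the center-of-mass computation of Section 5.2 applies verbatim. Beyond this the argument is a direct synthesis of Theorem \ref{ineqthm2}(ii) with energy conservation for $\bar\eta$ and Cauchy--Schwarz, and in particular no fine analysis of the $\delta$-shock set is required---the main advantage being that the constant $C_0$ is time-independent rather than exponentially growing as in a Gronwall-type bound.
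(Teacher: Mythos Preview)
Your proposal is correct and follows the same route as the paper, which simply cites \eqref{relineq3} as furnishing the proof. You have filled in the details the paper leaves implicit: bounding the time-integrand on the right of \eqref{relineq3} uniformly by $\|\bar v(t,\cdot)\|_{L^2_{\rho_0}}$ via Cauchy--Schwarz, and then controlling the latter by energy conservation for the strong solution together with the elementary lower bound $\E[\bar\eta]\ge -\tfrac{1}{16}$.
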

\begin{proof}
    The estimate \eqref{relineq3} furnishes the proof. 
\end{proof}
\subsubsection{Large friction limit}
In this section, we study the damped Euler-Poisson system
\begin{equation}\label{dEP}
 \begin{split}
  \dot{\eta}_\eps &= \frac{\delta \H}{\delta m}[\eta_\eps,m_\eps], \\
  {\dot{ m}}_\eps &= -\frac{\delta \H}{\delta \eta}[\eta_\eps,m_\eps] - \frac{1}{\eps} m^\eps.
 \end{split}
\end{equation}
In the second equation of \eqref{dEP}, the presence of a large friction term, parameterized by a small constant $\eps > 0$, suggests that the body under consideration is expected to rapidly lose momentum. This leads to a phase where the momentum remains small, resulting in comparable effects between the two terms on the right-hand side of equation $\eqref{dEP}_2$. Our focus lies on understanding this intermediate asymptotic behavior, which occurs before the body ceases to move as $t \rightarrow \infty$. We aim to compare this motion to that governed by the gradient flow \eqref{GF}. %

We adopt the methodology presented in \cite{2017Lattanzio}. To observe the aforementioned effects while the momentum remains small, we introduce a coarse time scale $s$ so that $t = \frac{s}{\eps}$. We use the notation $f'(s,x) = \partial_s f(s,x)$ and define the following scaled variables. 
\begin{align*}
 \hat\eta_{\eps}(s,\cdot)=\eta_\eps\left(\frac{s}{\eps},\cdot\right), \quad \hat{v}_{\eps}(s,x)&=\hat\eta_\eps'(s,x) = \frac{1}{\eps}\dot{\eta}_\eps\left(\frac{s}{\eps},x\right),\\
 \hat{m}_{\eps}(s,x) &=\rho_0(x)\hat{v}_\eps(s,x)= \frac{1}{\eps}  m_\eps\left(\frac{s}{\eps},x\right).
\end{align*}
For convenience, we do not explicitly write the dependence of $\eps$ in the variables $\eta_\eps$ and $m_\eps$, whenever it is clear from the context. We can express the Hamiltonian $\H[\eta,m](t)$ with respect to the scaled variables $\hat{\eta}(s, \cdot)$ and $\hat{m}(s, \cdot)$ where $t = s/\eps$:
\begin{align*}
 \H[\eta,m](t) = \hat\H[\hat\eta,\hat m](s) := \eps^2\int_X \frac{|\hat{m}(s, \cdot)|^2}{2\rho_0}\:dx + \E[\hat\eta](s).
\end{align*}
Then \eqref{dEP} leads to the scaled system of equations
\begin{equation}\label{eq:hamsca}
 \begin{split}
  \eps^2\hat{\eta}' &= ~~\frac{\delta \hat\H}{\delta \hat{m}}[\hat{\eta}, \hat{m}] = \eps^2 \frac{\hat{m}}{\rho_0}, \\
  \eps^2\hat{m}' + \hat{m} &= -\frac{\delta \hat\H}{\delta \hat{\eta}}[\hat{\eta}, \hat{m}] = -\frac{\delta \E}{\delta \hat\eta} [\hat{\eta}].
 \end{split}
\end{equation}
The formal limit of system \eqref{eq:hamsca} at $\eps = 0$ are the two equalities
$$ {\bar\eta}' = \frac{\bar m}{\rho_0}, \quad \bar m= - \frac{\delta \E}{\delta \hat\eta}[\bar\eta],$$
or 
\begin{equation} \label{GF}
\rho_0 \bar\eta' = - \frac{\delta \E}{\delta \hat\eta}[\bar\eta]
\end{equation}
the gradient flow of $\E$. 

The systems \eqref{dEP} and \eqref{GF} have already been discussed in \cite[Remark 4.3]{2016Carrilloa}. In particular, the local well-posedness for smooth solutions of \eqref{dEP} is proven in \cite[Appendix A]{2016Carrilloa}. If $K_a$ is absent, or the interaction is purely repulsive and the kernel consists solely of the Newtonian part, well-posedness of \eqref{GF} has been studied by \cite{2015Bonaschi} in one dimension and \cite{2012Bertozzi} in higher dimensions. In their studies \cite{2015Bonaschi,2012Bertozzi}, they also present results for the sign corresponding to attractive interactions.

The purpose of the remaining part of this section is to provide a rigorous proof of the continuity of the flows with respect to $\eps$ at $0$ under our regularity assumptions. Specifically, this continuity is established under the assumption that there exists a family of dissipative weak solutions $(\eta_\eps, m_\eps)_{\eps>0}$ to \eqref{eq:hamsca} and a strong solution to the gradient flow \eqref{GF}. 

Regularity assumptions on the dissipative weak solution and the strong solution are precisely those stated in Section \ref{sec:FP}, along with the following additional conditions. The continuity at $\eps=0$ is proven under the absence of $\delta$-shock. Specifically, we assume that:
\begin{equation}\label{additional}
    \begin{aligned}
     &\text{$\eta(s,\cdot)$ and $\bar\eta(s,\cdot)$ are strictly monotone for each $s \in I$.}
    \end{aligned}
 \end{equation}
The existence result in \cite{2016Carrilloa} demonstrates that the solution locally persists and remains strictly monotone in time. However, it is important to note that the assumption of strict monotonicity throughout the lifetime of the solutions for \eqref{dEP} and those for the gradient flow \eqref{GF} is included as an assumption in our current analysis.

One notational remark is that we have introduced $(\bar\eta,\bar{m})(s,\cdot)$ for the strong solution of \eqref{GF}, which, in this section, is already expressed in terms of $s$. Also, the space $L^2_{\rho_0}$ is equipped with the scalar product
\[ (v,w) \mapsto \int_{X} v(x)w(x)\:\dr, \]
and $-\frac{\delta \E}{\delta \hat{\eta}}$ is a bounded linear functional on $L^2_{\rho_0}$ (see Proposition \ref{vw2}). Thus the expression $- \frac{1}{\rho_0}\frac{\delta \E}{\delta \hat\eta}[\hat\eta]$ represents an element of $L^2_{\rho_0}$ by the Riesz representation.

For the dissipative weak solution, the equations and inequalities \eqref{wksol2}-\eqref{dissip2} are replaced, respectively, as follows:
\begin{align}
&\begin{aligned} 
&-\int_I\int_{X} \eps^2{\hat m}(s,x){\phi}'(s,x) + \hat{m}(s,x)\phi(s,x)\: dxds \\
&= \int_I \mathbf{f}{[\hat \eta(s,\cdot)]}\big(\phi(s,\cdot)\big) \: ds, \quad \text{for all $\phi \in C_c^{0,1}(I \times X)$,} \label{wksol3}
\end{aligned} \\ 
&\left\{\begin{aligned}
    &-\int_I \hat\H[\hat\eta, \hat{m}]{\theta}'(s)ds + \int_I \theta(s)\int_X \hat{m}(s,x)\hat{v}(s,x) \:dxds \leq 0, \\
    &\text{for any non-negative $\theta\in C^{0,1}_c((a,b))$ and} \\
    &\hat\H[\eta,m](a+) \le \hat\H[\eta,m](a).\label{dissip3}
\end{aligned}  \right.
\end{align}

As to the strong solution, \eqref{strsol2} is replaced by
\begin{align*}
    \int_X {\bar{{m}}} \phi(x) \: dx = \mathbf{f}[\bar\eta](\phi(\cdot))
\end{align*}
for all $\phi\in C_c(X)$, and for a.e. $t\in I$. With the expression $R(s,x) := \eps^2\bar{m}'(s,x)$, we will use for strong solution:
\begin{align} \label{strsol3-2}
 &\eps^2\int_X \bar{m}'\phi + \int_X (\bar{m}-R) \phi(x) \: dx = \mathbf{f}[\bar\eta](\phi),\\
  &-\int_I \hat\H[\bar\eta, \bar{m}]{\theta}'(s)ds + \int_I \theta(s)\int_X \big(\bar{m}(s,x)- R(s,x)\big)\bar{v}(s,x) \:dxds = 0. \label{dissip3-2}
 \end{align}

The proximity between the solution of \eqref{dEP} and that of \eqref{GF} is proven, assuming the proximity between the initial states in the following sense.
 \begin{equation}
 \label{eq:condini}
 \begin{aligned}
    \| \hat\eta_{\eps}(0,\cdot) - \bar\eta_{a(t)}(0,\cdot)\|_{L^2_{\rho_0}} = \mathcal{O}(\eps^2), \quad \Big\|\hat{\eta}_\eps'(0,\cdot) -\Big(-\frac{1}{\rho_0} \frac{\delta \E}{\delta \hat\eta}(\bar\eta(0,\cdot)\Big)\Big\|_{L^2_{\rho_0}} =\mathcal{O}(\eps). 
 \end{aligned}
 \end{equation}

\begin{theorem} \label{thm:frictionlimit}
 Let $I=[0,T)$ for any $T>0$ and let $\bar\eta$ be a strong solution to \eqref{GF} and $(\hat\eta_\eps)_{\eps>0}$ be a family of (dissipative) solutions to \eqref{eq:hamsca} satisfying \eqref{additional} and initially \eqref{eq:condini}. Then  
 \[ \| \hat\eta_\eps(s,\cdot) - \bar\eta_{a(t)}(s,\cdot)\|_{L^2_{\rho_0}}=\mathcal{O}(\eps^2) \quad \text{and} \quad  \| \hat{\eta}_\eps'(s,\cdot) \|_{L^2_{\rho_0}}=\mathcal{O}(1)\]
 uniformly for $0 \leq s \leq T.$
\end{theorem}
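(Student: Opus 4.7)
The plan is to perform a relative Hamiltonian computation in the scaled variables, treating the gradient-flow solution $\bar\eta$ as a ``strong solution'' of the scaled damped system \eqref{eq:hamsca} with residual $R=\eps^{2}\bar m'$, as already encoded in \eqref{strsol3-2}--\eqref{dissip3-2}. Using the formulas of Section \ref{specialEP} together with the strict-monotonicity hypothesis \eqref{additional} (which makes the repulsive relative energy vanish, per Theorem \ref{ineqthm2}(i)), the relevant relative Hamiltonian becomes
\begin{equation*}
\hat{\H}[\hat\eta_\eps,\hat m_\eps|\bar\eta,\bar m](s)=\eps^{2}\int_X\frac{\rho_0|\hat v_\eps-\bar v|^{2}}{2}\,dx+\frac{1}{2}\int_X|\hat\eta_\eps-\bar\eta_{a(s)}|^{2}\,\dr,
\end{equation*}
and the initial-data hypothesis \eqref{eq:condini} gives $\hat{\H}[\hat\eta_\eps,\hat m_\eps|\bar\eta,\bar m](0)=\mathcal{O}(\eps^{4})$. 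The goal is to show that this quantity remains $\mathcal{O}(\eps^{4})$ uniformly on $[0,T]$.

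Next I would retrace the proof of Theorem \ref{ineqthm2}, with two modifications: the dissipation inequality \eqref{dissip3} now carries the friction integral $\int_X\hat m_\eps\hat v_\eps\,dx$, and the identity \eqref{dissip3-2} for the strong side carries $\int_X(\bar m-R)\bar v\,dx$. Testing \eqref{wksol3} with $(\hat v_\eps-\bar v)\theta$---admissible since \eqref{additional} forces $\Omega_{\hat\eta_\eps(s,\cdot)}=\emptyset$---and using \eqref{strsol3-2} in the strong case, the friction contributions from both sides together with the extra time derivative of the scaled kinetic cross term $\eps^{2}\int_X\bar v(\hat m_\eps-\bar m)\,dx$ amalgamate into the non-negative dissipation $\int_X\rho_0|\hat v_\eps-\bar v|^{2}\,dx$. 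Crucially, since $K_a''\equiv\tfrac{1}{2}$ is constant, one checks that $K_a'(\hat\eta_\eps|\bar\eta)\equiv 0$, so the attractive relative work rate vanishes identically. The resulting relative Hamiltonian inequality reads
\begin{equation*}
\hat{\H}(s_{1}-)+\int_{0}^{s_{1}}\!\!\int_X\rho_0|\hat v_\eps-\bar v|^{2}\,dx\,ds\le\hat{\H}(0)+\int_{0}^{s_{1}}\!\!\int_X R\,(\hat v_\eps-\bar v)\,dx\,ds.
\end{equation*}

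To close the estimate, Young-splitting the residual yields $|\int_X R(\hat v_\eps-\bar v)\,dx|\le\tfrac{1}{2}\int_X\rho_0|\hat v_\eps-\bar v|^{2}\,dx+\tfrac{\eps^{4}}{2}\|\bar m'\|^{2}_{L^{2}_{1/\rho_0}}$, so that the first piece is absorbed into the dissipation on the left and the second yields an $\mathcal{O}(\eps^{4})$ forcing thanks to the regularity of the gradient flow on the compact interval $[0,T]$ (from \cite{2016Carrilloa}). No Gronwall argument is required: we obtain $\hat{\H}(s)\le\hat{\H}(0)+CT\eps^{4}=\mathcal{O}(\eps^{4})$ uniformly on $[0,T]$, which translates directly into $\|\hat\eta_\eps-\bar\eta_{a(s)}\|_{L^{2}_{\rho_0}}=\mathcal{O}(\eps^{2})$ and $\|\hat v_\eps-\bar v\|_{L^{2}_{\rho_0}}=\mathcal{O}(\eps)$; the bound $\|\hat\eta_\eps'\|_{L^{2}_{\rho_0}}=\mathcal{O}(1)$ follows from the triangle inequality against $\bar v\in L^{\infty}_sL^{2}_{\rho_0}$. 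The principal obstacle is the rigorous justification of the modified relative Hamiltonian identity: one must extend the approximation argument of Theorem \ref{ineqthm} to simultaneously accommodate the friction term in \eqref{wksol3} and the residual in \eqref{strsol3-2}, and establish the regularity $\bar m'\in L^{\infty}_{s}L^{2}_{1/\rho_0}$ of the gradient-flow solution needed for the residual bound. The strict monotonicity \eqref{additional} conveniently trivializes the admissibility-cone constraint, but the interplay between the two dissipative structures (friction and Hamiltonian energy dissipation) needs to be carefully tracked through the telescoping argument.
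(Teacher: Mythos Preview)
Your proposal is correct and follows essentially the same approach as the paper: derive the damped relative Hamiltonian inequality (using \eqref{wksol3}, \eqref{dissip3}, \eqref{strsol3-2}, \eqref{dissip3-2}), exploit strict monotonicity to kill the $\sgn$-term, observe that the attractive relative work vanishes since $K_a''$ is constant, and then absorb the residual $R=\eps^{2}\bar m'$ into the friction dissipation via Young's inequality to get the $\mathcal{O}(\eps^{4})$ bound without Gronwall. One minor imprecision: you should not literally test \eqref{wksol3} with $(\hat v_\eps-\bar v)\theta$, since $\hat v_\eps$ need not be an admissible test function; as in the paper (and in the proof of Theorem~\ref{ineqthm2}), the $\hat m_\eps\hat v_\eps$ contribution comes from the dissipation inequality \eqref{dissip3}, while \eqref{wksol3} is tested only with $\theta\bar v$.
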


\begin{remark}
    That $\|\hat\eta_\eps^\prime(s,\cdot)\|_{L^2_{\rho_0}} = \mathcal{O}(1)$ implies $\|\dot{\eta_\eps}(t,\cdot)\|_{L^2_{\rho_0}} = \mathcal{O}(\eps)$ at $t= \frac{s}{\eps}$.
   \end{remark}

\begin{proof}
Using \eqref{wksol3}, \eqref{dissip3}, \eqref{strsol3-2}, and \eqref{dissip3-2}, we have
\begin{align*}
        &-\int_I \hat\H[\hat\eta, \hat{m} | \bar{\eta}, {\bar{m}}] \theta'(s)\:ds \\
        & + \int_I\iint_{X \times X} \Big(-\frac{\sgn([\bar\eta])}{4} + \frac{[\bar\eta]}{2}\Big)[\eta-\bar{\eta}]\theta' \: \drr ds\\
        & \le \int_I \theta \int_X \big( -\hat{m}\hat{v} + (\bar{m} - R)\bar{v}\big)\:dxds + \eps^2\int_I\int_X (\hat{m}-\bar{{m}})\bar{v}\theta' dxds.
\end{align*}
Using equations \eqref{wksol3} and \eqref{strsol3-2} simlarly as in the proof of Theorem \ref{ineqthm2}, the right-hand-side equals 
\begin{equation} \label{residual}
\begin{aligned}
    & \int_I \theta \int_X  \bar{v}\big(\hat{m} -\bar{m} +R\big) + (\bar{m}-R)(\hat{v}-\bar{v}) + \big( -\hat{m}\hat{v} + (\bar{m} - R)\bar{v}\big) \:dxds\\
     +~~ \frac{1}{2}&\int_I \theta \int_{\{\eta(x)=\eta(x'), x>x'\}} \big(\bar{v}(x) - \bar{v}(x')\big)\: \drr ds.
\end{aligned}
\end{equation}
Under the assumption of strict monotonicity \eqref{additional}, the second integral of \eqref{residual} vanishes. The first integral of \eqref{residual} equals 
\begin{align*}
    =&~\int_I \theta \int_X  -\rho_0(\hat{v}-\bar{v})^2 + R(\bar{v}-\hat{v}\big) \:dxds \le \eps^4\int_I \theta \int_X  \rho_0(\bar{v}')^2 \:dxds.
\end{align*}
We conclude that 
\begin{align}\label{eq:reg} 
 \hat\H[\hat{\eta},\hat{m}|\bar\eta,\bar{m}](T) - \hat\H[\hat{\eta},\hat{m},\bar\eta,\bar{m}](0) \le \eps^4 T\|\bar{v}'\|_\infty^2.
\end{align}
 By the formula at each time $s \in I$
 $$\hat\H[\hat{\eta},\hat{m}|\bar\eta,\bar m](s) = \frac{\eps^2}{2}\|\hat{v}-\bar{v}\|^2_{L^2_{\rho_0}}(s) + \frac{1}{2} \|\hat\eta - \bar\eta_{a(s)}\|^2_{L^2_{\rho_0}}(s).$$
The assumptions in \eqref{eq:condini} ensure $\hat\H[\hat\eta_\eps, \hat{m}_\eps| \bar{\eta}, \bar{m }]\big|_{s=0} =\mathcal{O}(\eps^4)$, and
 \eqref{eq:reg} ensures $\hat\H[\hat\eta_\eps, \hat{m}_\eps| \bar{\eta}, \bar{m} ](s)= \mathcal{O}(\eps^4)$ for each finite time $s>0$. This establishes the first assertion. 
For the second assertion, it suffices to establish $\|\bar{v}\|_{L^2_{\rho_0}} = \mathcal{O}(1)$. By testing $\bar\eta$ on the gradient flow \eqref{GF}, it is straightforward to see 
$$ \frac{d}{ds}\|\bar\eta\|^2_{L^2_{\rho_0}} \le C \|\bar\eta\|^2_{L^2_{\rho_0}} \quad \text{for some $C>0$.}$$
Therefore $\|\bar\eta\|_{L^2_{\rho_0}}$ remains $\mathcal{O}(1)$ for each finite time $s>0$. Considering the form of \eqref{f} in Proposition \ref{vw2} and that $\rho_0\bar v = -\frac{\delta\E}{\delta \hat\eta}[\bar\eta]$, by Riesz representation $\|\bar{v}\|_{L^2_{\rho_0}} = \mathcal{O}(1)$.
\end{proof}

\begin{remark}
In fact, under the gradient flow \eqref{GF}, the center of mass of $\bar{\eta}$ remains stationary, which can be demonstrated as follows:
\begin{equation*}
    \frac{d}{ds} \int_{X} \bar\eta(s,x)\: \dr = \int_{X} \bar\eta'(s,x)\: \dr 
    = -\iint_{X\times X} K'([\bar\eta])\: \drr  =  0,
\end{equation*}
where we used the fact that $K'(y) = -\frac{\sgn(y)}{4} + \frac{y}{2}$ is an odd function. On the other hand, as shown in \cite[equation $(2.3)$]{2016Carrilloa}, the average velocity of the damped Hamiltonian solution decreases exponentially with a decay rate of $\eps^{-2}$. Consequently, the center of mass of $\hat{\eta}_\eps$ moves by, at most, $\mathcal{O}(\eps^2)$ over finite time. 
This implies that if the initial center of mass of $\hat{\eta}_\eps$ and that of $\bar{\eta}$ are distanced by $\mathcal{O}(\eps^2)$, then we have for any finite $s$
\[ \| \hat\eta_\eps(s,\cdot) - \bar\eta(s,\cdot)\|_{L^2_{\rho_0}}=\mathcal{O}(\eps^2) \]
without the shift factor.

\end{remark}

\appendix

\section{Appendix A}
\begin{appendixlemma} \label{lem:const}
 Suppose $\eta\in \hat{\textsf{S}}(X)$ is admissible. There exist positive constants $E_1$, $E_2$, and $E_3$ such that 
\begin{equation*}
\begin{aligned}
 &\iint_{X \times X} |x-x'|^{p-1}\Big| \big(\eta(x)-\eta(x')\big) -\big(\eta_*(x)-\eta_*(x')\big)\Big| d\rho_0(x)d\rho_0(x') < E_1,\\
 &\iint_{X \times X} |x-x'|^{q-1}\Big| \big(\eta(x)-\eta(x')\big) -\big(\eta_*(x)-\eta_*(x')\big)\Big| d\rho_0(x)d\rho_0(x') < E_2,\\
&\int_X \left|\frac{\eta_x(x) - \eta_{*x}(x)}{\rho_0}\right| \: \dr < E_3.
\end{aligned}
\end{equation*}
$E_1=E_1(\Lip(\eta),p)$, $E_2 = E_2(\Lip(\eta^{-1}),q)$, and $E_3=E_3(\Lip(\eta),\gamma)$.
\end{appendixlemma}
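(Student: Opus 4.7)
The plan is to dominate each of the three target integrands pointwise by the integrands in the admissibility conditions (i)--(ii) of Section~\ref{sec:(A)}, via a single application of the mean value theorem. The crucial feature is that in one dimension the monotonicity of $\eta,\eta_*\in\hat{\textsf{S}}(X)$ forces each difference $\eta(x)-\eta(x')$ to have the same sign as $x-x'$, so sign issues do not arise and the bi-Lipschitz bounds transfer cleanly.

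For $x>x'$, set $a:=\eta(x)-\eta(x')>0$ and $b:=\eta_*(x)-\eta_*(x')>0$. The bi-Lipschitz hypothesis yields
\[ \frac{|x-x'|}{M_{-}}\ \le\ \min(a,b)\ \le\ \max(a,b)\ \le\ M_{+}\,|x-x'|, \]
with $M_{+}:=\max(\Lip(\eta),\Lip(\eta_*))$ and $M_{-}:=\max(\Lip(\eta^{-1}),\Lip(\eta_*^{-1}))$. Moreover $a-b=(\eta-\eta_*)(x)-(\eta-\eta_*)(x')$, so the left-hand sides of the first two inequalities in the Lemma equal $|x-x'|^{\alpha-1}|a-b|$ for $\alpha\in\{p,q\}$.

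For $E_1$ and $E_2$, the mean value theorem applied on $(0,\infty)$ gives $|K_r(a)-K_r(b)|=\xi^{p-1}|a-b|$ and $|K_a(a)-K_a(b)|=\xi^{q-1}|a-b|$ for some $\xi$ between $a$ and $b$. Since $p-1<0$ when $p\in(-1,1)\setminus\{0\}$, the upper bound $\xi\le M_{+}|x-x'|$ yields $\xi^{p-1}\ge M_{+}^{\,p-1}|x-x'|^{p-1}$, so that $|x-x'|^{p-1}|a-b|\le M_{+}^{\,1-p}|K_r(a)-K_r(b)|$; the bound $E_1$ then follows by symmetrizing in $(x,x')$ and invoking admissibility~(i) for $K_r$, with the case $p=1$ being trivial since there $|a-b|=|K_r(a)-K_r(b)|$. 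For $q\ge 1$ one has $q-1\ge 0$, so the \emph{lower} bound on $\xi$ is used instead, giving $\xi^{q-1}\ge M_{-}^{\,1-q}|x-x'|^{q-1}$, and $E_2$ follows analogously with the explicit $\Lip(\eta^{-1})$-dependence.

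For $E_3$, since $\rho_0=\mathcal{L}\mres\tilde X$, the functions $\tau=\eta_x$ and $\tau_*=\eta_{*x}$ are a.e.\ positive and bounded above by $M_{+}$. Applying the mean value theorem to $\psi(s)=s^{1-\gamma}$ with $\gamma>1$ gives $|\psi(\tau)-\psi(\tau_*)|=(\gamma-1)\xi^{-\gamma}|\tau-\tau_*|$ with $0<\xi\le M_{+}$, whence $|\tau-\tau_*|\le M_{+}^{\,\gamma}(\gamma-1)^{-1}|\psi(\tau)-\psi(\tau_*)|$; integrating against $\rho_0$ and using admissibility~(ii) finishes the argument. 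No substantial obstacle is expected; the only real point, which would fail in higher dimensions, is that one-dimensional monotonicity gives $a$ and $b$ a common sign, allowing the bi-Lipschitz bounds and the energy-theoretic integrability assumptions to be linked directly through the mean value identity.
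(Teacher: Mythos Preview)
Your proposal is correct and follows essentially the same approach as the paper: both arguments use the mean value theorem (the paper writes it in integral form $\int_0^1(\lambda h+(1-\lambda)h_*)^{p-1}\,d\lambda$, you use the classical point $\xi$), bound the intermediate value via the bi-Lipschitz constants, and then invoke the admissibility integrability conditions. The identification of which Lipschitz constant enters for which exponent (upper bound for $p-1<0$, lower bound for $q-1\ge0$) is the same in both proofs.
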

\begin{proof}
 For the first assertion, the case $p=1$ is trivial, and we take $p\in (-1,1)\setminus \{0\}$. For the second assertion we take $q>1$ with the same reason. We introduce $h(x,x')$ and $\bar h(x,x')$ as in the proof of Theorem \ref{lowerbound}.
For the integrand for $p\in (-1,1)\setminus \{0\}$
 \begin{align*}
  \Big| \frac{1}{p}\big|[\eta]\big|^p - \frac{1}{p}\big|[\eta_*]\big|^p\Big|
  &= \Big| \frac{h^p}{p} - \frac{h_*^p}{p}\Big| \, |x-x'|^p\\
  &= \Big| \int_0^1 (\lambda h + (1-\lambda)h_*)^{p-1} \:d\lambda\Big|\, |h-h_*|\,|x-x'|^p\\
  &\ge \max\{h,h_*\}^{p-1} \big|[\eta - \eta_*]\big| \,|x-x'|^{p-1}\\
  &\ge \max\{\Lip(\eta),\Lip(\eta_*)\}^{p-1}\big|[\eta - \eta_*]\big| \,|x-x'|^{p-1}.
 \end{align*}
For $q>1$,
 \begin{align*}
  \Big| \frac{1}{q}\big|[\eta]\big|^q - \frac{1}{q}\big|[\eta_*]\big|^q\Big|
  &\ge \min\{h,h_*\}^{q-1} \big|[\eta - \eta_*]\big| \,|x-x'|^{q-1}\\
  &\ge \min\left\{\frac{1}{\Lip(\eta^{-1})},\frac{1}{\Lip(\eta^{-1})}\right\}^{q-1} \big|[\eta - \eta_*]\big| \,|x-x'|^{q-1}\\
  &= \max\{\Lip(\eta^{-1}), \Lip(\eta_*^{-1})\}^{1-q} \big|[\eta - \eta_*]\big| \,|x-x'|^{q-1}
 \end{align*}
Therefore
 \begin{align*}
  &\iint_{X\times X} |x-x'|^{p-1}\, \big|[\eta - \eta_*]\big|\: \drr\\
  &\le\max\{\Lip(\eta),\Lip(\eta_*)\}^{1-p} \iint_{X\times X}   \Big| \frac{1}{p}\big|[\eta]\big|^p - \frac{1}{p}\big|[\eta_*]\big|^p\Big| \:\drr < \infty,\\
  &\iint_{X\times X} |x-x'|^{q-1}\, \big|[\eta - \eta_*]\big|\: \drr\\
  &\le\max\{\Lip(\eta^{-1}), \Lip(\eta_*^{-1})\}^{q-1} \iint_{X\times X} \Big| \frac{1}{q}\big|[\eta]\big|^q - \frac{1}{q}\big|[\eta_*]\big|^q\Big| \:\drr < \infty.
 \end{align*}
 
 For the third assetion, we have the integrand for $\rho_0$-a.e. $x$
 \begin{align*}
  |\tau^{1-\gamma} - \tau_*^{1-\gamma}| & = |1-\gamma| \Big|\int_0^1 (\lambda \tau + (1-\lambda)\tau_*)^{-\gamma} \:d\lambda\Big|\, |\tau-\tau_*|\\
  &\ge|1-\gamma|\max\{\Lip(\eta),\Lip(\eta_*)\}^{-\gamma} |\tau-\tau_*|, \quad \text{which gives}
  \end{align*}
  \begin{align*}
  \int_X |\tau - \tau_*| \: \dr \le \frac{\max\{\Lip(\eta),\Lip(\eta_*)\}^{\gamma}}{\gamma-1} \int_X |\tau^{1-\gamma} - \tau_*^{1-\gamma}| \:\dr < \infty.
 \end{align*}

 \end{proof}

 \section*{Declarations}

 \paragraph*{Funding} 
 J.G. is grateful for financial support by the German Science Foundation 
 (DFG) via grant TRR 154 (\emph{Mathematical modelling, simulation and 
 optimization using the example of gas networks}), project C05. 
 K. Kwon and M. Lee was supported by the National Research Foundation of Korea (NRF) grant funded by the Korea government (MSIT) (No. 2020R1A4A1018190, 2021R1C1C1011867).
 
 \paragraph*{Competing interests}
 The authors have no relevant financial or non-financial interests to disclose.

 \bibliographystyle{amsplain}
 \bibliography{bibliography/euler_poisson_refs}

\end{document}